\documentclass[12pt, a4paper,leqno]{amsart}


\usepackage{amsmath,amsfonts,amsthm,amssymb,amsxtra}

\usepackage[numbers]{natbib}
\usepackage[colorlinks,citecolor=red,pagebackref,hypertexnames=false]{hyperref} 

\usepackage{tikz} 
\usepackage{xcolor}

\usepackage{marginnote}
\usepackage[makeroom]{cancel}
%



\setlength{\voffset}{-.7truein}
\setlength{\textheight}{8.8truein}
\setlength{\textwidth}{6.5truein}
\setlength{\hoffset}{-0.5truein}


\newtheorem{theorem}{Theorem}
\numberwithin{theorem}{section}

\newtheorem{proposition}[theorem]{Proposition}

\newtheorem{lemma}[theorem]{Lemma}

\newtheorem{corollary}[theorem]{Corollary}

\theoremstyle{definition}

\newtheorem{definition}{Definition}[section]


\newtheorem{assumption}[]{Assumption}

\theoremstyle{remark}
\newtheorem{remark}[theorem]{Remark}






\newcommand{\C}{\mathbb{C}}

\renewcommand{\epsilon}{\varepsilon}

\newcommand{\R}{\mathbb{R}}

\newcommand{\Z}{\mathbb{Z}}
\newcommand{\om}{\omega}
\newcommand{\Om}{\Omega}
\newcommand{\lat}{\mathcal{L}}

\newcommand{\lan}{\langle}
\newcommand{\ran}{\rangle}

\newcommand{\pname}{\text{Landscape admissible}}
\newcommand{\G}{\Gamma}
\newcommand{\tildeW}{\tilde W}

\newcommand{\vertiii}[1]{{\left\vert\kern-0.25ex\left\vert\kern-0.25ex\left\vert #1 
    \right\vert\kern-0.25ex\right\vert\kern-0.25ex\right\vert}}

\DeclareMathOperator{\Tr}{Tr}

\DeclareMathOperator{\den}{den}



\newcommand{\ipc}[2]{\left \langle #1 , \ #2 \right \rangle }
\newcommand{\eps}{\varepsilon}

\newcommand{\rmd}{d}

\newcommand{\wt}{\widetilde}

\makeatletter
\def\blfootnote{\gdef\@thefnmark{}\@footnotetext}
\makeatother
\setlength{\parskip}{\baselineskip}

\begin{document}

\title{On a Novel Effective Equation of the Reduced Hartree-Fock Theory}
\author{Ilias Chenn, Svitlana Mayboroda, Wei Wang, Shiwen Zhang}

\maketitle
\date{ }

\newcommand{\Addresses}{{
  \bigskip
  \vskip 0.08in \noindent --------------------------------------

  \footnotesize

\medskip

I.~Chenn, \textsc{Department of Mathematics, Massachusetts Institute of Technology, 2-252b, 77 Massachusetts Avenue, 
Cambridge, MA  4307 USA }\par\nopagebreak
  \textit{E-mail address}: \texttt{nehcili@mit.edu}
  
\vskip 0.4cm

  S.~Mayboroda, \textsc{School of Mathematics, University of Minnesota, 206 Church St SE, Minneapolis, MN 55455 USA}\par\nopagebreak
  \textit{E-mail address}: \texttt{svitlana@math.umn.edu}

\vskip 0.4cm

  W. ~Wang, \textsc{School of Mathematics, University of Minnesota, 206 Church St SE, Minneapolis, MN 55455 USA}\par\nopagebreak
  \textit{E-mail address}: \texttt{wang9585@umn.edu }
  
\vskip 0.4cm

S.~Zhang, \textsc{School of Mathematics, University of Minnesota, 206 Church St SE, Minneapolis, MN 55455 USA}\par\nopagebreak
  \textit{E-mail address}: \texttt{zhan7294@umn.edu }
  }}

\begin{abstract}
We show that there is an one-to-one correspondence between solutions to the Poisson-Landscape equations and the reduced Hartree-Fock equations in the semi-classical limit at low temperature. Moreover, we prove that the difference between the two corresponding solutions is small by providing explicit estimates.  
\end{abstract}

 \tableofcontents

\section{Introduction}
\subsection{Reduced Hartree-Fock equation}
Despite the success of density functional theory (DFT), its computational difficulties remain a major bottleneck. Filoche and Mayboroda initiated a series of recent works on the Landscape function \cite{FM} which led to a further simplification of the density functional theory by introducing the Poisson-Landscape (PL) equation \cite{PS1, PS2}. The Landscape theory and numerical simulations \cite{ADFJM1, ADFJM2, ADFJM3, PS1, PS2, ADFJM2, WZ} suggest that solving the PL equation can be an efficient and accurate replacement of the original DFT. This success undoubtedly demands a rigorous mathematical justification and a theoretical foundation.

DFT originated as a systematic way to study large many-body quantum system by using a self-consistent 1-body approximation. Parallel to its development, a number of effective theories existed along with DFT; examples include the Hartree-Fock theory, the Bardeen–Cooper–Schrieffer (BCS) theory, and the Thomas-Fermi theory of electrons. While DFT enjoyed a similar energy functional as the more complex Hartree-Fock theory and the BCS theory, inheriting a form of accuracy, it also gravitated towards the Thomas-Fermi theory to study the simpler electron density instead of density matrices. Owing to these characteristics, the Kohn-Sham (KS) energy and equation of DFT was developed \cite{HK, KS}. These equations and their related theory have become a mainstay of modern condensed matter physics. Some notable areas of application include semi-conductor design, deformation theory in solid mechanics, and quantum chemistry. In the mean time, a plethora of mathematical studies also ensued, for example, see \cite{ELu, HS, Levy, Levy2, Lieb3, Lieb,LS1,LS2,MJ,So2}. 

The KS equation is a set of functional equations for the electron density $\rho$, which is often simplified to the reduced Hartree-Fock equation (REHF) to illuminate core mathematical properties while maintaining its key features (see, for example, \cite{CDL,CLL,CSL,CS,FNV,LS,So1}). This is achieved by ignoring the exchange-correlations terms in the KS equation. In the same spirit,  we will also consider this simplified REHF in our work and be consistent with the aforementioned Landscape theory in \cite{PS1, PS2}. 

Consider a semi-conductor at a positive temperature, $\beta^{-1}$, with dopant density $\kappa$, and a band-offset potential $V$. We choose physical units such that as many physical constants are set to $1$ as possible. In this case, the REHF equation states that the material's electron density, $\rho$, is given by
\begin{equation}
    \rho = \den f_{\rm FD}(\beta(-\Delta + V - \phi -\mu)), \label{eqn:REHF-rho-original}
\end{equation}
where $\mu$ is the chemical potential/Fermi energy, $f_{\rm FD}$ is the Fermi-Dirac distribution
\begin{equation}
    f_{\rm FD}(\lambda) = \frac{1}{1+e^\lambda},
\end{equation}
$\phi$ is the electric potential solving the Poisson equation
\begin{equation}
    -\Delta \phi = \kappa - \rho, \label{eqn:REHF-phi}
\end{equation}
and $\den$ is the density operator defined via 
\begin{equation}
    (\den A)(x) = A(x,x), \label{eqn:den-def}
\end{equation} where $A$ is an operator on $L^2(\R^3)$ and $A(x,y)$ is the integral kernel of $A$ (see Appendix \ref{sec:per-vol-set-up} for more details).  If $A$ has a full set of eigenbasis $\phi_i$ with eigenvalues $\lambda_i$, then $\den A$ has the more familiar expression
\begin{equation}
    (\den A)(x) = \sum_i \lambda_i |\phi_i|^2(x).
\end{equation} 

We remark that while equation \eqref{eqn:REHF-rho-original} is an equation for microscopic electronic structures of matter, dopant potentials and band-offsets often vary on another larger mesoscopic scale. A precise formulation of the problem would require a homogenized version of \eqref{eqn:REHF-rho-original} where mesoscopic parameters such as the dielectric operator emerge. However, we will make the possibly unphysical assumption that \eqref{eqn:REHF-rho-original} is already homogenized and the dielectric constant is $1$ purely for mathematical simplicity (see remark \ref{rmk:dielectric-constant-dont-matter}).

Moreover, we further restrict ourselves to the semi-classical regime and modify \eqref{eqn:REHF-rho-original} as
\begin{equation}
    \rho = \den f_{\rm FD}(\beta(-\eps^2 \Delta + V - \phi -\mu)), \label{eqn:REHF-rho}
\end{equation}
where $\eps \ll 1$ is the semi-classical parameter. In this regime, one natural effective equation for \eqref{eqn:REHF-rho} is 
\begin{equation}
    \rho = \frac{1}{(2\pi\eps)^{3}} \int_{\R^3} dp f_{\rm FD}(\beta(p^2 + V - \phi  - \mu)), \label{eqn:sc-old}
\end{equation}
where $\phi$ solves \eqref{eqn:REHF-phi} as before.
However, in semi-conductor models, the band-offset potential $V$ is piecewise constant. This renders semi-classical analysis potentially ineffective. That is, the na\"{i}ve  error, $O(\eps^{-3+1}\nabla V)$, of the difference between the right hand side of \eqref{eqn:REHF-rho} and the right hand side of \eqref{eqn:sc-old} cannot be meaningfully controlled. Consequently, a form of regularization is needed. The Poisson-Landscape equation presents a regularization method that preserves both the spectrum of the Hamiltonian and the density $\rho$ (more details can be found in the proof of Theorem \ref{thm:eff-eqn}).

\subsection{Landscape theory and the Poisson-Landscape equation}
In one view, the Landscape theory presents a partial diagonalization of the Schr\"{o}dinger Hamiltonian $H = -\epsilon^2\Delta + V - \phi$ \cite{ADFJM3}. In \cite{FM}, if $H > 0$, the Landscape function $u$ is defined as
\begin{equation}
    Hu = 1,
\end{equation}
and the Landscape potential $W$ is defined as
\begin{equation}
    W = 1/u. \label{eqn:landscape-pot-W-1u-def-in-intro}
\end{equation}
Conjugating $H$ by $u$, we obtain
\begin{equation}
    u^{-1} H u = -\eps^2\Delta - 2u^{-1}\eps\nabla u \cdot \eps\nabla + W. \label{eqn:u-inv-H-u}
\end{equation}
We remark here that $u^{-1} H u$ has the same spectrum as $H$. This forms the basis for isospectral regularization as mentioned at the end of the previous section. Ignoring the drift term in $u^{-1}Hu$, this suggest that we should modify equation \eqref{eqn:sc-old} as
\begin{equation}
    \rho =  \frac{1}{(2\pi\eps)^{3}} \int_{\R^3} dp f_{\rm FD}(\beta(p^2 + W  - \mu)), \label{eqn:PL-rho-form}
\end{equation}
where
\begin{align}
    &W = 1/u, \\
    &(-\eps^2 \Delta + V - \phi)u = 1.
\end{align}
This equation was proposed and studied in \cite{PS1, PS2}. Together with \eqref{eqn:REHF-phi}, they bear the name Poisson-Landscape (PL) equation. 

The PL equation was proposed as a computational simplification rather than a regularization method for the semi-classical expansion initially. Numerical solution to the REHF equation requires an extensive computation of a large number of eigenvalues and eigenfunctions of the Hamiltonian $H$. Although various eigensolvers have been developed for this purpose (for a survey, see \cite{BDDRV, Saad}), such a direct computation remains a challenge in large-scale systems, particularly in high dimensions. 
In the specific setting of semi-conductor physics with random potentials, the Landscape function $u$ alleviates this problem through the approximation that the $i$-th lowest eigenvalue $E_i$ of the Hamiltonian $H$ can be numerically predicted by the $i$-th smallest local minimum of the landscape potential $W$ (defined in \eqref{eqn:landscape-pot-W-1u-def-in-intro}), $W_i$:
\begin{equation}
    E_i \approx \left( 1+ \frac{d}{4} \right) W_i,
\end{equation}
where $d$ is the spatial dimension (see \cite{ADFJM1}). Following this success, \cite{ADFJM1} showed further that the number of eigenvalues below $E$, $N_V(E)$, of $H$ can be approximated by
\begin{equation}
    N_V(E) \approx   \frac{1}{(2\pi\eps)^{3}} \int_{\R^3 \times \Om} dp dx\, 1_{\{p^2 + W(x) \leq E\}} \label{eqn:eff-Weyl}
\end{equation}
numerically. This approximation enjoys a more accurate prediction than the usual Weyl's law on average. We note that the left hand side of \eqref{eqn:eff-Weyl} is
\begin{equation}
    \int_{\R^3}  \rho_{T=0,\mu=E}, 
\end{equation}
where $\rho_{T=0,\mu=E}$ is the electron density at zero temperature with $\mu = E$ (cf. \eqref{eqn:REHF-rho}). Consequently, we expect that the solutions to the Poisson-Landscape equation \eqref{eqn:PL-rho-form} are good approximations to the density of electrons. 

Up to now, many of the stated advantages of the Landscape theory have mostly been proven useful for numerical purposes. Hence, the goal of the current work is to introduce a rigorous treatment of the Poisson-Landscape equation as an effective equation of the REHF equation in the semi-classical limit. Other related rigorous mathematical treatments of the Landscape theory can be found in \cite{ADFJM2, DFM, WZ}.

\subsection{Results}
We limit ourselves to the periodic setting in which physical quantities are periodic on $\Om =\R^3/(L\Z)^3\cong [0,L]^3$, while the quantum states are on $\R^3$. That is, quantities such as $\rho, \kappa, V$ or $\phi$ are periodic while the associated operators, such as $H = -\eps^2\Delta+V-\phi$, act on $L^2(\R^3)$ (see Appendix \ref{sec:per-vol-set-up} for more details). 

Moreover, let $X = \R^3$ or $\Om$
and $L^p(X;{\mathbb F})$ be the usual $L^p$ space of ${\mathbb F}$ valued functions on $X$, where ${\mathbb F}=\R$ or $\C$. In the special case when $\mathbb{F}=\C$, we denote $L^p(X)=L^p(X;\C)$. We endow $L^p(X; \mathbb{F})$ with its standard $p$-norms. Similarly, we equip $L^2(X; \mathbb{F})$ its standard inner product. Due to the periodic nature of $\Om$, we identify $L^2(\Om; \mathbb{F})$ with
\begin{equation}
    \left\{ f \in L^2_{\rm loc}(\R^3;  \mathbb{F}) : \text{$f$ is $(L\Z)^3$ periodic and } \int_\Om |f|^2 < \infty \right\}.
\end{equation} 
We let $H^s(\Om;{\mathbb F}) 
\subset L^2(\Om;{\mathbb F})$ denote the the associated Sobolev spaces of order $s$ with periodic boundary conditions. The identification of $H^s(\Om; \mathbb{F})$ with $H^s$ periodic functions on $\R^3$ persists. When $\mathbb{F} = \C$, we will suppress the symbol $\C$.  
The conversion from $L^2(\R^3; \mathbb{F})$ to $L^2(\Om; \mathbb{F})$ is done via the density operator $\den$, introduced in \eqref{eqn:den-def}. That is, the $\den$ of a periodic operator on $L^2(\R^3)$ is a periodic function, with fundamental domain $\Om$. Next, we restrict our study to the following type of piecewise constant potentials, which can be viewed as a (hence any) realization of a random potential of Anderson type.  

\begin{definition} \label{eqn:pname-def}
Let $0 < L \in \Z$. An $(L\Z)^3$ periodic potential $V$ is called {\pname} if $V$ is a strictly positive and piecewise constant, given by 
\begin{equation}\label{eq:pwV}
    V(x)=\sum_{j\in \Z^3}\omega_j\, \chi(x-j), \ \ {\rm for}\  \ x\in \Omega  ,
\end{equation}
where $0 < \omega_j \in \R$ is $(L\Z)^3$ periodic in $j$  and $\chi(x)$ is the indicator function of $[0,1)^3$. We note that a ${\pname}$ function $V$ is real valued by this definition.
\end{definition}

Through out the paper, we will write $A \lesssim B$ or $A = O(B)$ if $A \leq CB$ for some constant $C$ independent of $\eps$, $\delta$, $\beta$, $V$, and $\kappa$. In particular, we will write $A \approx B$ if $A \lesssim B \lesssim A$. We will also write $A \ll B$ or $A = o(B)$ if  
\begin{equation}
    \lim_{\delta \rightarrow 0} \lim_{\substack{\epsilon \rightarrow 0\\ \beta \rightarrow \infty}} \frac{A}{B} = 0. \label{eqn:ll-def}
\end{equation}
Whenever $\epsilon$ and $\beta$ are related (under Assumption \ref{ass:temperature} below), the inner limit in \eqref{eqn:ll-def} is taken under the said relationship.

Our first result shows that the density on the right hand side of \eqref{eqn:REHF-rho} can be approximated by the right hand side of \eqref{eqn:PL-rho-form}. This result will be proved in Section \ref{sec:expansion}. We begin by stating the necessary assumptions.

\begin{assumption}[Semi-classical regime] \label{ass:semi-classical}
The semi-classical parameter 
\begin{equation}
    \eps \ll 1.
\end{equation}
\end{assumption}

\begin{assumption} \label{ass:pname}
The external potential $V$ is periodic on $\Om$ and is {\pname} (in particular, it is real valued). Let $V_{\min}$ and $V_{\max}$ denote the minimum and maximum of $V$ on $\Om$, respectively. Let $\delta = V_{\max} - V_{\min}$. Then
\begin{equation}
    \eps \ll \delta \ll V_{\min} .\label{eqn:small-delta-ass}
\end{equation}
\end{assumption}

\begin{theorem} \label{thm:FD-leading-order}
Let $V$ be a {\pname} potential and assume that Assumptions \ref{ass:semi-classical} and \ref{ass:pname} hold. In addition, assume that 
\begin{enumerate}
    \item $\beta > 0$ is sufficiently large,
    \item  $\phi \in H^2(\Om;\R)$ and $\|\phi\|_{H^2(\Om)} \lesssim \delta$, 
    \item $V - \mu \geq C > 0$, where $C$ is a constant independent of $\delta$ and $\epsilon$.
\end{enumerate}
Then there exists $V_{\rm cut} \in \R$  
with
\begin{align}
  \delta^{1/4} \lesssim V_{\min} - V_{\rm cut} \lesssim \delta^{1/4}, \label{eqn:cut-size}
\end{align}
such that
\begin{align}
  \den f_{\rm FD} & (\beta (-\epsilon^2 \Delta +V - \phi - \mu)) \notag \\
  &=   \frac{1}{(2\pi\eps)^{3}}\int_{\R^3} dp f_{\rm FD}(\beta(p^2 + W_1 + V_{\rm cut}-\mu)) + R_1 \label{eqn:FD-semi-classical-leading-orders} \\
  &=    \frac{1}{(2\pi\eps)^{3}} \int_{\R^3} dp f_{\rm FD}(\beta(p^2 + W_2 - \phi + V_{\rm cut}-\mu)) + R_2, \label{eqn:FD-semi-classical-leading-orders-2}
\end{align}
where
\begin{equation}
    \|R_1\|_{L^2(\Om)},\|R_2\|_{L^2(\Om)} \lesssim \eps^{-3+1/2} \beta^{-1} e^{-\beta(V_{\rm cut}-\mu-\delta^{1/4})}
\end{equation}
and $W_1=1/u_1$ and $W_2 = 1/u_2$ solve
\begin{align}
    &(-\eps^2 \Delta+ (V-\phi-V_{\rm cut}))u_1=1, \label{eqn:shifted-Landscape-eqn} \\
    &(-\eps^2 \Delta+ (V-V_{\rm cut}))u_2 =1. \label{eqn:shifted-Landscape-eqn-2} 
\end{align}
\end{theorem}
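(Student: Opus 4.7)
The plan is to combine the isospectral Landscape conjugation from the introduction with a semiclassical expansion of the Fermi--Dirac function. The key algebraic observation is that if $u$ is a strictly positive multiplication operator then conjugation by $u$ preserves the diagonal of any Borel function of $H$: analytic functional calculus commutes with conjugation and $(u^{-1}Bu)(x,x)=B(x,x)$ for any integral kernel $B$, so $\den f(u^{-1}Hu)=\den f(H)$. Applied to the shifted Hamiltonian $\tilde H:=-\eps^2\Delta+V-\phi-V_{\rm cut}$, conjugation by the Landscape function $u_1$ from \eqref{eqn:shifted-Landscape-eqn} produces
\[
u_1^{-1}\tilde H u_1=-\eps^2\Delta-2u_1^{-1}\eps\nabla u_1\cdot\eps\nabla+W_1,
\]
replacing the piecewise-constant potential $V-\phi-V_{\rm cut}$ by the much smoother $W_1=1/u_1$; conjugation of $\tilde H$ by $u_2$ produces $W_2-\phi$ analogously. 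The cutoff $V_{\rm cut}$ is chosen so that $V_{\min}-V_{\rm cut}\sim \delta^{1/4}$; together with $\|\phi\|_{L^\infty}\lesssim \|\phi\|_{H^2}\lesssim\delta\ll\delta^{1/4}$ this guarantees $\tilde H\ge c\delta^{1/4}>0$, so that $u_i$ and $W_i=1/u_i$ exist with good elliptic bounds.

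The second step is a semiclassical expansion of $f_{\rm FD}(\beta(\,\cdot\,-\mu))$ applied to the regularized operator. I would first split off the contribution from spectral parameters below $V_{\rm cut}$: the positivity of $\tilde H$ and the exponential tail $f_{\rm FD}(\beta t)\le e^{-\beta t}$ give a bound of order $\beta^{-1}e^{-\beta(V_{\rm cut}-\mu-\delta^{1/4})}$ for this piece, the $\delta^{1/4}$ shift inside the exponential reflecting the spectral lower bound $\tilde H\ge c\delta^{1/4}$. For the remaining, spectrally truncated part I would use a Helffer--Sj\"ostrand contour representation of the smoothed $f_{\rm FD}$ and expand the resolvent of $u_1^{-1}\tilde H u_1$ by standard semiclassical pseudodifferential calculus. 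The leading-order Weyl symbol is $(p^2+W_1+V_{\rm cut}-\mu-z)^{-1}$, whose trace-per-volume integrated against $f_{\rm FD}$ along the contour reproduces exactly $(2\pi\eps)^{-3}\int f_{\rm FD}(\beta(p^2+W_1+V_{\rm cut}-\mu))\,dp$, the leading term of \eqref{eqn:FD-semi-classical-leading-orders}.

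The remainder $R_1$ then comes from the subleading symbols and from the drift term. Each extra order in the Weyl expansion carries a factor of $\eps$ paired with derivatives of $W_1$ or $u_1^{-1}\nabla u_1$, and the drift contributes one extra $\eps$ once paired with the resolvent. Elliptic estimates on \eqref{eqn:shifted-Landscape-eqn} give $\|u_1\|_{L^\infty}\approx (V_{\min}-V_{\rm cut})^{-1}\approx \delta^{-1/4}$ and $\|W_1\|_{L^\infty}\approx \delta^{1/4}$, while Caccioppoli-type inequalities control $\eps\nabla u_1/u_1$. This is exactly where the Landscape conjugation pays off: the singularity of $\nabla V$ at the jumps, which would otherwise produce the meaningless $O(\eps^{-3+1}\nabla V)$ loss mentioned in the introduction, is absorbed into the elliptic PDE defining $u_1$. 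Combining these bounds with the standard semiclassical trace-per-volume estimate yields the stated $\eps^{-3+1/2}$ factor, and the argument for $R_2$ is identical after replacing $u_1$ with $u_2$.

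The main obstacle will be to make the drift $-2u_1^{-1}\eps\nabla u_1\cdot\eps\nabla$ compatible with the pseudodifferential calculus. Its symbol is formally $O(\eps)$, but the coefficient $u_1^{-1}\nabla u_1$ inherits only limited regularity from the piecewise-constant $V$, so its higher derivatives can be large across the jumps. The expansion only closes if the natural semiclassical wavelength $\eps/\sqrt{V_{\min}-V_{\rm cut}}\sim \eps\delta^{-1/8}$ stays well below the length scale on which $u_1$ varies, and this is the quantitative reason why the exponent $1/4$ in \eqref{eqn:cut-size} appears: it optimizes between the truncation error $e^{-\beta(V_{\rm cut}-\mu-\delta^{1/4})}$ and the elliptic/pseudodifferential losses incurred by the drift. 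Establishing this balance uniformly in $\eps,\beta,\delta$ is the technical heart of the argument.
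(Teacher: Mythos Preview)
Your high-level strategy matches the paper's: conjugate by the Landscape function to replace the piecewise-constant potential by the regularized $W$, then perform a semiclassical expansion and control the drift term via elliptic estimates on $u$. The choice of $V_{\rm cut}$ with $V_{\min}-V_{\rm cut}\sim\delta^{1/4}$ is also the same.

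Where you diverge is in the analytic machinery. You propose Helffer--Sj\"ostrand plus standard semiclassical pseudodifferential calculus; the paper instead exploits that $f_{\rm FD}(\beta(\cdot+V_{\rm cut}-\mu))$ is genuinely analytic on $\{\Re z>-(V_{\rm cut}-\mu)\}$ and writes $f(H)$ as a Cauchy integral of the resolvent around a contour at distance $\sim\delta^{1/4}$ from the spectrum. After conjugation the resolvent $(z-(-\eps^2\Delta+U))^{-1}$ with $U=2\eps^2 W^{-1}\nabla W\cdot\nabla+W-\varphi$ is expanded by hand as a geometric series in $RU$, $R=(z+\eps^2\Delta)^{-1}$; the leading piece $\sum_n \tildeW^n R^{n+1}$ is resummed exactly into the claimed $p$-integral via Taylor's theorem and Cauchy's formula, while every remainder term is bounded in the per-volume Schatten norm $\mathfrak S^p(\Om)$ using Kato--Seiler--Simon. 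The $\eps^{1/2}$ gain comes from choosing $p=2$ in these trace-norm estimates, not from a generic Weyl-law remainder.

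This distinction matters, and it is exactly the obstacle you flag in your last paragraph but do not resolve. Standard $\Psi$DO calculus needs pointwise control of all derivatives of the symbol, and $W$ simply does not have this: near the jump set of $V$ one has $\|\nabla W\|_{L^\infty}\sim\eps^{-1}$ and $\|\Delta W\|_{L^\infty}\sim\eps^{-2}$, so the symbol expansion would not close. The paper circumvents this by never needing more than $L^p$ bounds on $\nabla W$ and $\Delta W$, and these are small because the jump set has measure $O(\eps)$: the key input is
\[
\|\nabla W\|_{L^p}\lesssim \delta\, v_{\max}^{1/2-1/p}\eps^{-(p-1)/p},\qquad
\|\Delta W\|_{L^p}\lesssim \delta\, v_{\max}^{1-4/p}\eps^{-(2p-1)/p},
\]
which feed directly into the Schatten estimates for the commutators $[R,\tildeW]$ and the drift blocks $RW_{11}\cdot\eps\nabla$. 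Your ``Caccioppoli-type'' remark points in the right direction but is too vague; the actual proof requires this interpolation between $L^2$ (which gains $\eps^{1/2}$ from the jump-set measure) and $L^\infty$ (which does not), and the condition $\delta\ll v_{\max}^{5/2}$, i.e.\ $v_{\max}\gg\delta^{2/5}$, is what forces the exponent $1/4$ in \eqref{eqn:cut-size}, not a wavelength-versus-variation balance. If you want your route to succeed you would have to replace the smooth $\Psi$DO calculus by one adapted to symbols of limited regularity, which in effect reproduces the paper's resolvent-series-plus-Schatten argument.
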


Theorem \ref{thm:FD-leading-order} provides the foundation for a rigorous justification of the PL equation. In addition, \eqref{eqn:shifted-Landscape-eqn-2} suggests that a simpler effective equation is also possible. More precisely, let
\begin{align}
    &F_{\rm REHF}(\phi, \mu) := \den f_{\rm FD}(\beta(-\eps^2 \Delta + V - \phi -\mu)), \label{eqn:F-REHF-def}\\
    &F_{\rm PL}(\phi,\mu) := \frac{1}{(2\pi\eps)^{3}} \int_{\R^3} dp f_{\rm FD}(\beta(p^2 + W_1 + V_{\rm cut}-\mu)), \label{eqn:F-PL-def}\\
    & F_{\rm LSC}(\phi,\mu) := \frac{1}{(2\pi\eps)^{3}} \int_{\R^3} dp f_{\rm FD}(\beta(p^2 + W_2 - \phi + V_{\rm cut}-\mu)), \label{eqn:F-LSC-def}
\end{align}
where $W_1 =1/u_1$ and $W_2 = 1/u_2$ and $u_1$ and $u_2$ are given in \eqref{eqn:shifted-Landscape-eqn} and \eqref{eqn:shifted-Landscape-eqn-2}, respectively. LSC stands for ``Landscape regularized semi-classical'' and we will henceforth call this new $ F_{\rm LSC}$ the Landscape regularized semi-classical (LSC) regime. We note that  $ F_{\rm LSC}$  is a further simplification of $F_{\rm PL}$ and more closely resembles the semi-classical approximation \eqref{eqn:sc-old}.  Inserting $\rho = F(\phi,\mu)$ for $F = F_{\rm REHF}, F_{\rm PL},  F_{\rm LSC}$ into equation \eqref{eqn:REHF-phi}, we obtain the REHF, PL, LSC equation, respectively, for the electric potential $\phi$:
\begin{equation}
    -\Delta \phi = \kappa - F(\phi,\mu). \label{eqn:general-F-def}
\end{equation}

One advantage of $F=F_{\rm LCS}$ is that \eqref{eqn:general-F-def} is the Euler-Lagrange equation of a certain (energy) functional (see Appendix \ref{app:PL-existence}). This ensures that the linearization in $\phi$ is self-adjoint, whereas the linearization of $F_{\rm PL}$ is not self-adjoint in general. More importantly, the potential $W_2$ does not depend on $\phi$ and it only depends on the underlying material property due to $V$. One may further incorporate the doping features into $V$ the addition of an ansatz due to doping and electron density. That is, if $\rho_0$ is an a priori estimate for $\rho$, with associated electric potential $\phi_0$, we may look for solutions to \eqref{eqn:general-F-def} of the form $\rho = \rho_0 + \rho'$ and $\phi = \phi_0 + \phi'$. Substituting these expressions into \eqref{eqn:general-F-def} and upon minor modification, we obtain
\begin{equation}
    -\Delta \phi' =   \frac{1}{(2\pi\eps)^{3}} \int_{\R^3} dp f_{\rm FD}(\beta(p^2 + \widetilde W - \phi' + V_{\rm cut}-\mu)) - \rho_0
\end{equation}
where $\widetilde W = 1/\tilde u$ and $\tilde u$ solves
\begin{equation}
    (-\epsilon^2 \Delta + V - \phi_0 - V_{\rm cut}) \tilde u = 1.
\end{equation}
Hence, all the material and doping properties are stored in $\tilde W$, which is independent of $\phi'$.

Finally, to state our main result relating the REHF, PL, LSC equations and the associated electric fields, we specify additional assumptions.

\begin{assumption}[Low temperature] \label{ass:temperature}
There is some $K \in \R$ such that $0 < K < V_{\min}$ and the inverse temperature $\beta$ satisfies
\begin{equation}
    K < \frac{\log(\eps^{-3})}{\beta} < V_{\min}.
\end{equation}
\end{assumption}

\begin{assumption}[Conservation of charge] \label{ass:doping}
The doping potential $\kappa \in L^{2}(\Om;\R)$. Moreover, 
\begin{equation}
    \kappa_0 :=\frac{1}{|\Om|}\int_\Om \kappa \label{eqn:kappa-0-def}
\end{equation}
is a fixed constant.
\end{assumption}

\begin{theorem}[Main result] \label{thm:eff-eqn}
Let Assumptions \ref{ass:semi-classical} - \ref{ass:doping} hold. Assume that $(\phi_0, \mu)\in H^2(\Omega;\R)\times \R$ solves \eqref{eqn:general-F-def} with $F$ being any one of \eqref{eqn:F-REHF-def}, \eqref{eqn:F-PL-def}, or \eqref{eqn:F-LSC-def}, and
\begin{equation}
    \|\phi_0\|_{H^2(\Om)} \lesssim \delta. \label{eqn:ass:on-phi-0}
\end{equation}
Then there exists $C_1,C_2 > 0$ and a unique $\phi \in H^2(\Om;\R)$ such that $\|\phi_0 - \phi\|_{H^2(\Om)} \lesssim \epsilon^{C_1\delta^{1/4}}$ and $(\phi, \mu)$ solves \eqref{eqn:general-F-def} with $F$ being any other one of \eqref{eqn:F-REHF-def}, \eqref{eqn:F-PL-def}, or \eqref{eqn:F-LSC-def}. Moreover,
\begin{equation}
     \|\phi_0 - \phi\|_{H^2(\Om)} \lesssim \epsilon^{1/2-C_2\delta^{1/4}}. \label{eqn:PS-error}
\end{equation}
\end{theorem}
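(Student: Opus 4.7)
The theorem is a perturbation statement powered by Theorem~\ref{thm:FD-leading-order}: at any fixed $\phi_0\in H^2(\Om;\R)$, the three nonlinearities $F_{\rm REHF}$, $F_{\rm PL}$, $F_{\rm LSC}$ differ pairwise by a small $L^2$ residual. The plan is therefore to pick a ``source'' $F_1$ and ``target'' $F_2$ from $\{F_{\rm REHF},F_{\rm PL},F_{\rm LSC}\}$, use the given solution $(\phi_0,\mu)$ of the $F_1$-equation as an approximate solution of the $F_2$-equation, and produce the true nearby solution via a quantitative implicit function argument. First I would control the remainder $R:=F_1(\phi_0,\mu)-F_2(\phi_0,\mu)$: Theorem~\ref{thm:FD-leading-order} yields $\|R\|_{L^2(\Om)}\lesssim \eps^{-5/2}\beta^{-1}e^{-\beta(V_{\rm cut}-\mu-\delta^{1/4})}$, and Assumption~\ref{ass:temperature}, which ties $\beta\sim \log(\eps^{-3})/K$ with $K<V_{\min}$, converts the exponential into a power of $\eps$. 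Choosing $K$ close to $V_{\rm cut}-\mu$ (both comparable to the positive gap $V_{\min}-\mu$ up to $O(\delta^{1/4})$) gives $\|R\|_{L^2(\Om)}\lesssim \eps^{1/2-C_2\delta^{1/4}}$, which is already the target bound \eqref{eqn:PS-error} at the level of the initial error.

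Next, writing $\phi=\phi_0+\psi$, the $F_2$-equation becomes
\[
\bigl(-\Delta+\mathcal L\bigr)\psi \;=\; R - \mathcal N(\psi), \qquad \mathcal L := D_\phi F_2(\phi_0,\mu),
\]
with $\mathcal N(\psi):=F_2(\phi_0+\psi,\mu)-F_2(\phi_0,\mu)-\mathcal L\psi$ quadratic in $\psi$. I would then establish quantitative invertibility of $L:=-\Delta+\mathcal L$ on $H^2(\Om;\R)$, modulo the scalar charge-conservation constraint $\int_\Om F_2(\phi,\mu)=|\Om|\kappa_0$ (Assumption~\ref{ass:doping}), which can be absorbed by adjusting the constant mode of $\phi$ (possible because $\int_\Om \mathcal L>0$). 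For $F_2=F_{\rm LSC}$ this is cleanest: $\mathcal L$ is multiplication by the nonnegative semi-classical density of states, so $L$ is a positive self-adjoint operator on $L^2(\Om;\R)$. For $F_2\in\{F_{\rm PL},F_{\rm REHF}\}$ I would write $\mathcal L$ as the LSC linearization plus a small (non-self-adjoint) perturbation obtained by differentiating Theorem~\ref{thm:FD-leading-order} formally in $\phi$, and invert by Neumann series. A standard contraction-mapping argument on the ball $\{\|\psi\|_{H^2}\leq \eps^{C_1\delta^{1/4}}\}$ then yields a unique fixed point of $\psi\mapsto L^{-1}(R-\mathcal N(\psi))$, since $\|\mathcal N(\psi)\|_{L^2}$ is quadratic in $\|\psi\|_{H^2}$ with constants polynomial in $\eps^{-1}$ (via smoothness of $f_{\rm FD}$ and standard elliptic estimates for \eqref{eqn:shifted-Landscape-eqn}--\eqref{eqn:shifted-Landscape-eqn-2}); the fixed point satisfies $\|\psi\|_{H^2}\lesssim\|R\|_{L^2}\lesssim \eps^{1/2-C_2\delta^{1/4}}$, giving \eqref{eqn:PS-error}.

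The main obstacle is the quantitative invertibility of $L$: one must show $\|L^{-1}\|_{L^2\to H^2}$ is at most polynomially large in $\eps^{-1}$, so that the semi-classical prefactor $\eps^{-3}$ implicit in $\mathcal L$ cannot overwhelm the smallness of $R$. The saving grace is that under Assumptions~\ref{ass:pname}--\ref{ass:temperature} one has $\mu<V_{\min}$, making the density and thus $\mathcal L$ exponentially small relative to any negative power of $\eps$; consequently $-\Delta$ dominates on the mean-zero subspace, where its spectral gap on the torus gives an $O(1)$ resolvent. Handling $F_{\rm PL}$ and $F_{\rm REHF}$, whose linearizations are not self-adjoint, demands an additional resolvent comparison with the LSC case, and one must simultaneously track the scalar adjustment enforcing charge conservation so that $\mu$ can indeed be kept fixed in the final statement.
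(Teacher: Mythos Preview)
Your high-level architecture matches the paper exactly: write $\phi=\phi_0+\psi$, recast the target equation as $L\psi=R-\mathcal N(\psi)$ with $R=F_1(\phi_0,\mu)-F_2(\phi_0,\mu)$, bound $\|R\|_{L^2}\lesssim\eps^{1/2-C\delta^{1/4}}$ via Theorem~\ref{thm:FD-leading-order} together with the integrability constraint, and close by a quantitative contraction/IFT. The paper packages this as Lemma~5.1, with the linear and nonlinear inputs supplied by separate theorems.

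The gap is in your linear analysis, and it stems from a wrong scaling picture. You assert that ``$\mathcal L$ is exponentially small relative to any negative power of $\eps$'', so that $-\Delta$ dominates on the mean-zero sector. This is false: Assumption~\ref{ass:doping} fixes $\kappa_0$, and integrating \eqref{eqn:general-F-def} forces $\eps^{-3}\beta^{-3/2}e^{-\beta(V-\mu)}\approx 1$ (this is Corollary~4.2 in the paper). Hence the density is $O(1)$, and the linearization $\mathcal L$ is of size $\eps^{\pm C\delta^{1/4}}$, neither large like $\eps^{-3}$ nor exponentially small. The paper exploits this precise scale by proving, case by case, that $L\gtrsim -\Delta+m_0^{C}$ with $m_0=\eps^{\delta^{1/4}}$ (Theorems~6.1, 6.7, 6.8), so that $\|L^{-1}\|_{L^2\to H^2}\lesssim m_0^{-C}$. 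This subpolynomial loss is exactly what balances the nonlinear constant $C_N\sim m_0^{-C}$ (Theorem~5.3) against $\|R\|\lesssim\eps^{1/2-C\delta^{1/4}}$; a genuinely polynomial $\|L^{-1}\|$ or $C_N$ would not close the contraction.

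Your proposed shortcut for the REHF and PL cases, namely ``differentiate Theorem~\ref{thm:FD-leading-order} in $\phi$'' to write $\mathcal L$ as the LSC linearization plus a small remainder, also does not go through: Theorem~\ref{thm:FD-leading-order} controls $R_1,R_2$ only in $L^2$, with no bound on $d_\phi R_i$. The paper instead analyzes each linearization directly. For REHF it shows $M_{\rm REHF}$ is positive self-adjoint and approximates it by an explicit Fourier-multiplier operator $M_0$ (via a Landscape-conjugated semiclassical expansion and the explicit formula of Lemma~6.3), then uses a pointwise lower bound (Lemma~6.6). For PL it computes $M_{\rm PL}=m_{\rm PL}h_0^{-1}$ and shows its anti-self-adjoint part is $O(\eps^{1/4}m_0^{-C})$ small. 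None of this is a perturbation off the LSC linearization; it is a separate calculation per model, and that is where the real work of the proof lies.
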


Theorem \ref{thm:eff-eqn} has an immediate corollary in terms of the density $\rho$. Rearranging \eqref{eqn:general-F-def}, the corresponding equations for the density are
\begin{align}
    \rho =& F(\phi, \mu), \label{eqn:general-F-rho-eqn1}\\
    -\Delta \phi =& \kappa - \rho. \label{eqn:general-F-rho-eqn2}
\end{align}

\begin{corollary} \label{cor:eff-eqn}
Let Assumptions \ref{ass:semi-classical} - \ref{ass:doping} hold. Assume that  $(\rho_0, \mu)\in (\kappa+H^{-2}(\Omega;\R))\times \R$ solves \eqref{eqn:general-F-rho-eqn1} - \eqref{eqn:general-F-rho-eqn2} with $F$ being any one of \eqref{eqn:F-REHF-def}, \eqref{eqn:F-PL-def}, or \eqref{eqn:F-LSC-def}, and
\begin{align}
    \|\kappa-\rho_0\|_{H^{-2}(\Om)} \lesssim \delta. \label{eqn:condition-on-existence}
\end{align}
Then there exists $C_1,C_2 > 0$ and a unique $\rho \in \kappa+H^{-2}(\Om;\R)$ such that $\|\rho_0 - \rho\|_{H^{-2}(\Om)} \lesssim \epsilon^{C_1\delta^{1/4}}$ and $(\rho, \mu)$ solves \eqref{eqn:general-F-rho-eqn1} - \eqref{eqn:general-F-rho-eqn2} with $F$ being any other one of \eqref{eqn:F-REHF-def}, \eqref{eqn:F-PL-def}, or \eqref{eqn:F-LSC-def}. Moreover,
\begin{align}
     \|\rho_0 - \rho\|_{H^{-2}(\Om)} \lesssim \epsilon^{1/2-C_2\delta^{1/4}}.
\end{align}
\end{corollary}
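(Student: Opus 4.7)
The plan is to use the Poisson equation \eqref{eqn:general-F-rho-eqn2} to set up a bijection between solutions in the $\rho$-formulation and solutions in the $\phi$-formulation, reducing Corollary \ref{cor:eff-eqn} to a direct application of Theorem \ref{thm:eff-eqn}. The key quantitative input is that on the periodic torus, $-\Delta$ is an isomorphism from $H^2(\Om;\R)$ modulo constants onto the zero-mean subspace of $H^{-2}(\Om;\R)$, with $\|\phi\|_{H^2(\Om)} \approx \|\Delta \phi\|_{H^{-2}(\Om)}$ for zero-mean $\phi$, as one verifies directly using Fourier series on $\Om$.

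Starting from $(\rho_0,\mu)$, I would define $\phi_0$ to be the unique zero-mean element of $H^2(\Om;\R)$ solving $-\Delta \phi_0 = \kappa - \rho_0$. This is well-posed because $\kappa - \rho_0 \in H^{-2}(\Om;\R)$ has zero mean (a solvability condition built into the periodic Poisson equation and consistent with Assumption \ref{ass:doping}). The hypothesis \eqref{eqn:condition-on-existence} then gives $\|\phi_0\|_{H^2(\Om)} \lesssim \|\kappa - \rho_0\|_{H^{-2}(\Om)} \lesssim \delta$, so \eqref{eqn:ass:on-phi-0} is satisfied, and by construction $(\phi_0,\mu)$ solves \eqref{eqn:general-F-def} with the same choice of $F$ as the original system for $(\rho_0,\mu)$. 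Applying Theorem \ref{thm:eff-eqn} then produces the unique $\phi \in H^2(\Om;\R)$ solving \eqref{eqn:general-F-def} with any of the other choices of $F$, together with the bounds $\|\phi_0-\phi\|_{H^2(\Om)} \lesssim \epsilon^{C_1 \delta^{1/4}}$ and the refined bound \eqref{eqn:PS-error}.

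Setting $\rho := \kappa + \Delta \phi = F(\phi,\mu)$ by \eqref{eqn:general-F-def}, the pair $(\rho,\mu)$ solves \eqref{eqn:general-F-rho-eqn1}--\eqref{eqn:general-F-rho-eqn2} with the alternative $F$, and the identity $\rho_0 - \rho = \Delta(\phi_0 - \phi)$ combined with the trivial bound $\|\Delta \psi\|_{H^{-2}(\Om)} \leq \|\psi\|_{H^2(\Om)}$ transfers the two $\phi$-level estimates to the required $H^{-2}$ estimates on $\rho_0-\rho$. Uniqueness of $\rho$ is inherited from the uniqueness clause of Theorem \ref{thm:eff-eqn}: any competing $\rho'$ would produce a competing zero-mean $\phi'$ via $-\Delta \phi' = \kappa - \rho'$, with $\|\phi_0 - \phi'\|_{H^2(\Om)} \lesssim \epsilon^{C_1\delta^{1/4}}$, which must coincide with $\phi$, forcing $\rho' = \rho$. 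Nothing in this argument is deep; the only point requiring care is the consistent normalization of $\phi$ by its mean so that the Poisson isomorphism delivers comparable norms in both directions, and this is the only potential bookkeeping obstacle.
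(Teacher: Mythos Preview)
Your argument is correct and is precisely the reduction the paper has in mind: the paper does not spell out a proof of Corollary \ref{cor:eff-eqn} at all, introducing it simply as ``an immediate corollary'' of Theorem \ref{thm:eff-eqn} via the rewriting \eqref{eqn:general-F-rho-eqn1}--\eqref{eqn:general-F-rho-eqn2}. Your use of the zero-mean gauge for $\phi_0$ to make the Poisson inversion an isomorphism (and hence transfer the $H^2$ and $H^{-2}$ estimates in both directions) is exactly the bookkeeping needed, and is consistent with the dilation-symmetry discussion in the remark following the corollary.
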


\begin{remark}
Corollary \ref{cor:eff-eqn} answers the challenge posed in the introduction. It justifies \cite{PS1, PS2} on a mathematically rigorous level in the semi-classical regime at low temperature (or large $\beta$).
\end{remark}

\begin{remark} \label{rmk:dielectric-constant-dont-matter} We noted in the paragraph before equation \eqref{eqn:REHF-rho} that the dielectric constant is taken to be $1$. However, as one will see from the proof of our main result Theorem \ref{thm:eff-eqn}, so long as the dielectric constant is strictly positive, the same conclusion can be derived, albeit with more cumbersome proofs.
\end{remark}

\begin{remark}
Note that in both Theorem \ref{thm:eff-eqn} and Corollary \ref{cor:eff-eqn}, a solution to \eqref{eqn:general-F-def} is a pair: either $(\phi, \mu)$ or $(\rho, \mu)$. Because of this particular view of solution, equation \eqref{eqn:general-F-def} has an important dilation symmetry (detailed below). Moreover, since $\kappa$ is real, another important complex conjugation symmetry exists. We now discuss these two symmetries and their consequences in light of Theorem \ref{thm:eff-eqn} and Corollary \ref{cor:eff-eqn}.
\begin{enumerate}
    \item (dilation symmetry)
    \begin{align}
        (\phi, \mu) \mapsto (\phi+t, \mu-t)
    \end{align}
    for $t \in \R$.
    \item (complex conjugation) If $\kappa$ and $V$ are real valued and $\mu \in \R$, then
    \begin{align}
        (\phi, \mu) \mapsto (\mathcal{C} \phi, \mu) \label{eqn:cc-sym}
    \end{align}
    is a symmetry of \eqref{eqn:general-F-def} where $\mathcal{C} \phi = \bar \phi$ is the complex conjugation of $\phi$.
\end{enumerate}
Dilation requires one to regard all solutions $(\phi, \mu)$ related by a dilation as a single solution. In this way, the uniqueness of solution is regarded as uniqueness among an equivalence class. Nevertheless, since we fixed $\mu$ in Theorem \ref{thm:eff-eqn} and Corollary \ref{cor:eff-eqn}, a particular representative of the equivalence class is chosen and there is no ambiguity in the word ``unique''. Perhaps a better way to view this is to consider $\phi+\mu$ as the solution instead of $(\phi, \mu)$. In this way, one avoids the equivalence class description. Nevertheless, since we are interested in the difference of two solutions (see \eqref{eqn:PS-error}), any choice of either point of view causes no harm. Moreover, the complex conjugation symmetry (and the uniqueness of solution) ensures that any solution to \eqref{eqn:general-F-def} with real $\kappa, V$ and $\mu$ is necessarily real. Thus, the conclusions regarding the reality of $\phi$ and $\rho$ in Theorem \ref{thm:eff-eqn} and Corollary \ref{cor:eff-eqn}, respectively, are in fact superfluous. 

One also note that the PL equation with \eqref{eqn:F-PL-def} does not have the dilation symmetry, contrasting the case of \eqref{eqn:F-REHF-def} and \eqref{eqn:F-LSC-def}. Whether this difference makes numerical approximations using \eqref{eqn:F-PL-def} less desirable is out of the scope of this paper, since \eqref{eqn:F-PL-def} respects the dilation symmetry in leading order $\eps$ if $\eps \ll 1$.
\end{remark}

Theorem \ref{thm:eff-eqn} could help us to prove existence of solutions for the three classes of equations REHF, PL, and LSC simultaneously. However, we were unable to prove the smallness assumption \eqref{eqn:ass:on-phi-0} in general. Though, we believe this condition should hold in many cases if $\|\kappa\|_{H^2(\Om)} \lesssim \delta$ (for related results, see \cite{PN, Lev, CZ}). Nevertheless, we provide an existence result to the simplest case, the LSC equations, via variational principle for completeness sake. Since this type of existence result is well studied in the literature, we will not enumerate all previous works. The interested reader is referred to, for example, \cite{AACan, CLeBL, CLeBL2, CS1, Nier}.

\begin{theorem}[LSC existence]
 If $\kappa - \kappa_0 \in H^{-2}(\Om;\R)$ (see \eqref{eqn:kappa-0-def} for definition of $\kappa_0$), there exists a solution $(\phi, \mu) \in H^2(\Om;\R) \times \R$ to the LSC equation \eqref{eqn:F-LSC-def}.
\end{theorem}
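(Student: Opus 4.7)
The plan is to use the direct method of the calculus of variations, exploiting the Euler--Lagrange structure alluded to in the paragraph following \eqref{eqn:general-F-def}. Define the free energy
\begin{equation*}
    \mathcal{E}(\phi,\mu) := \tfrac{1}{2}\int_\Om |\nabla\phi|^2 \, dx + \mathcal{G}(\phi,\mu) - \int_\Om \kappa\,\phi \, dx - |\Om|\kappa_0\,\mu
\end{equation*}
on $H^1(\Om;\R)\times\R$, where
\begin{equation*}
    \mathcal{G}(\phi,\mu) := \frac{1}{\beta(2\pi\eps)^3}\int_\Om\int_{\R^3}\log\bigl(1+e^{-\beta(p^2+W_2+V_{\rm cut}-\phi-\mu)}\bigr)\,dp\,dx.
\end{equation*}
A direct computation of the functional derivatives yields $\partial_\phi\mathcal{E} = -\Delta\phi + F_{\rm LSC}(\phi,\mu) - \kappa$ and $\partial_\mu\mathcal{E} = \int_\Om F_{\rm LSC}(\phi,\mu)\,dx - |\Om|\kappa_0$, so critical points of $\mathcal{E}$ solve the LSC Poisson equation together with the charge-neutrality constraint (the latter being redundant with the Poisson equation after integration over $\Om$). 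Since $x\mapsto\log(1+e^{-x})$ is convex, $\mathcal{E}$ is convex in $(\phi,\mu)$, and it is invariant under the dilation $(\phi,\mu)\mapsto(\phi+t,\mu-t)$.

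I will minimize $\mathcal{E}$ on $H^1_{\rm zm}(\Om;\R)\times\R$, where $H^1_{\rm zm}:=\{\phi\in H^1(\Om;\R):\int_\Om\phi=0\}$ removes the dilation freedom. Convexity gives weak lower semicontinuity, so the task reduces to proving coercivity and passing to the limit along a minimizing sequence. In the $\phi$ direction, the gradient term together with Poincaré controls $\|\phi\|_{H^1}$, and the linear coupling $\int_\Om\kappa\phi = \langle\kappa-\kappa_0,\phi\rangle$ is absorbed via Cauchy--Schwarz/Young using the integrability of $\kappa$ from Assumption \ref{ass:doping}. In the $\mu$ direction, the $p$-integral in $\mathcal{G}$ is the Fermi--Dirac integrated density of states: it grows like $|\Om|\mu^{5/2}/(2\pi\eps)^3$ as $\mu\to+\infty$ (the semi-classical phase-space volume) and decays to $0$ as $\mu\to-\infty$, so combined with $-|\Om|\kappa_0\,\mu$ and $\kappa_0>0$ it yields two-sided coercivity. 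The direct method then produces a minimizer $(\phi,\mu)$. Regularity follows from $-\Delta\phi = \kappa - F_{\rm LSC}(\phi,\mu)$: the Fermi--Dirac integrand is Gaussian-bounded in $p$, so $F_{\rm LSC}(\phi,\mu)\in L^\infty(\Om)$, and standard elliptic regularity together with the $L^2$ integrability of $\kappa$ from Assumption \ref{ass:doping} upgrades $\phi$ to $H^2(\Om;\R)$.

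The hardest step is the coercivity estimate in the coupled $(\phi,\mu)$ variables. Even with the zero-mean constraint on $\phi$, the log term depends on $\phi$ and $\mu$ only through the pointwise sum $\phi+\mu$, so along minimizing sequences with $\mu_n\to-\infty$ balanced by $\phi_n$ developing a large positive bulk on a measurable subset of $\Om$, the $\mu^{5/2}$ growth of $\mathcal{G}$ can be partially cancelled. Overcoming this will require a careful phase-space estimate showing that the concentration set on which $\phi_n+\mu_n$ remains bounded below has measure small enough to be controlled by $\|\nabla\phi_n\|_{L^2}^2$ via Poincaré, together with the uniform control of $W_2$ coming from Assumption \ref{ass:pname} to anchor $\mathcal{G}$ from below and the sign of $\kappa_0$ to suppress the opposite runaway direction $\mu_n\to+\infty$ with $\phi_n$ negative.
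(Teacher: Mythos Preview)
Your approach is correct but genuinely different from the paper's. The paper works on the \emph{primal} side: it minimizes the free energy
\[
\mathcal{F}_{\rm LSC}(\eta)=\int_{\R^3\times\Om}(\eps^2p^2+W_2+V_{\rm cut})\eta\,dp\,dx+\tfrac12\|\nabla^{-1}(\rho_\eta-\kappa)\|_{L^2(\Om)}^2-\beta^{-1}S(\eta)
\]
over phase-space densities $\eta(p,x)$ subject to the hard constraint $\int\eta=\int\kappa$ (Appendix~\ref{app:PL-existence}), and recovers $\mu$ as the Lagrange multiplier and $\phi$ from the Poisson equation afterwards. You work on the \emph{dual} side, minimizing directly over $(\phi,\mu)$. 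The paper's route buys cheap coercivity (the kinetic term bounds $\|\eta\|_{L^1((1+p^2)dpdx)}$ and the constraint set $D_\kappa$ is convex and closed), at the cost of a longer weak-compactness and lower-semicontinuity argument for $\eta$; your route is shorter and delivers $\phi$ directly, but pushes the work into coercivity in $\mu$.

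That difficulty is much smaller than your last paragraph suggests. Replace $W_2$ by $\sup W_2$ in $\mathcal{G}$; the integrand becomes an $x$-independent convex function of $\phi(x)+\mu$, so Jensen's inequality together with $\int_\Om\phi=0$ gives
\[
\mathcal{G}(\phi,\mu)\;\geq\;\frac{|\Om|}{\beta(2\pi\eps)^3}\int_{\R^3}\log\bigl(1+e^{-\beta(p^2+\sup W_2+V_{\rm cut}-\mu)}\bigr)dp\;=:\;|\Om|\,g(\mu),
\]
uniformly over zero-mean $\phi$. Since $g\geq 0$ and $g(\mu)\sim c\mu^{5/2}$ as $\mu\to+\infty$, the term $|\Om|g(\mu)-|\Om|\kappa_0\mu$ is coercive in $\mu$ on both sides (using $\kappa_0>0$); no concentration-set analysis is needed. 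One small gap remains in your regularity step: the bound $f_{\rm FD}(x)\leq e^{-x}$ only yields $F_{\rm LSC}(\phi,\mu)(x)\lesssim 1+(\phi(x)+\mu)_+^{3/2}$, not an $L^\infty$ bound, because $\phi\in H^1\subset L^6$ a priori. Close by bootstrapping: $F_{\rm LSC}\in L^4(\Om)\Rightarrow -\Delta\phi\in L^2(\Om)\Rightarrow \phi\in H^2(\Om)\subset L^\infty(\Om)$.
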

\begin{proof}
This is a direct corollary of Theorem \ref{thm:PL-existence-uniqueness}.
\end{proof}

Theorem \ref{thm:FD-leading-order} is proved in Section \ref{sec:expansion} following an analysis of the Landscape potential in Section \ref{sec:Landscape-anal}.  In order to use Theorem \ref{thm:FD-leading-order} to prove Theorem \ref{thm:eff-eqn}, we digress briefly in Section \ref{sec:integrability} to establish a relationship between the parameters $\epsilon, \beta, \mu$, etc. as a result of the constraint of the integrability condition
\begin{equation}
    \int_\Om \kappa = \int_\Om F(\phi, \mu), \label{eqn:integrability-cond}
\end{equation}
obtained by integrating \eqref{eqn:general-F-def} over $\Om$. The results from Section \ref{sec:integrability} and the assumptions of Theorem \ref{thm:eff-eqn} provide the proper scaling regime to control our estimates. Finally, The core proof of Theorem \ref{thm:eff-eqn} is given in Section \ref{sec:main-result-proof} while its related technical details are collected in subsequent sections: its linear analysis is given in Sections \ref{sec:lin-anal} and its nonlinear analysis is provided in Sections \ref{sec:nonlin-anal}.

\textbf{Acknowledgments.} The authors are grateful for stimulating discussions with D. N. Arnold, J.-P. Banon, M. Filoche, D. Jerison, A. Julia. The first author thanks I. M. Sigal for many helpful insights and guidance.

Chenn is supported in part by a Simons Foundation Grant 601948 DJ and a PDF fellowship from NSERC/Cette recherche a \'{e}t\'{e} financ\'{e}e par le CRSNG. 
Mayboroda is supported by NSF DMS 1839077 and the Simons Collaborations in MPS 563916, SM.
Wang is supported by Simons Foundation grant 601937, DNA. Zhang
is supported in part by the NSF grants DMS1344235, DMS-1839077, and Simons Foundation grant 563916, SM.

\section{Landscape function in the semi-classical regime} \label{sec:Landscape-anal}
In this section, we will obtain several estimates for the Landscape function $u$ in the semi-classical regime. These estimates will play an important role in the proof of the main result. The Landscape function $u$ is the solution to
\begin{equation}
    (-\eps^2 \Delta+V-\phi)u  = 1 \label{eqn:Landscape-original}
\end{equation}
on $\Om = [0,L]^3$  with periodic boundary condition,  where $V$ is {\pname} and $\phi \in H^2(\Om)$. Let us recall that $\delta = V_{\max} - V_{\min}$ where $V_{\rm max}$ and $V_{\rm min}$ are the max and min of $V$ on $\Om$, respectively. Then we have the following main result.

\begin{theorem} \label{thm:useful-L-p-est-on-DW}
Let $2 \leq p \leq \infty$. Suppose that $V$ is {\pname} and $\phi \in H^2(\Om)$. Moreover, assume that $\epsilon \ll \delta \ll V_{\rm min} \leq 1$ and $\|\phi\|_{H^2(\Om)} \lesssim \delta$. Let $W = 1/u$ where $u$ solves the Landscape function \eqref{eqn:Landscape-original} with periodic boundary condition on $\Om$, then
\begin{align}
    & \|\nabla W\|_{L^p(\Om)} \leq C \delta  V_{\max}^{1/2-1/p} \epsilon^{-\frac{p-1}{p}} , \label{eqn:Lp-of-nabla-u} \\
    & \|\Delta W\|_{L^p(\Om)} \leq C \delta  V_{\max}^{1-4/p} \eps^{-\frac{2p-1}{p}}, \label{eqn:Lp-of-Delta-u} 
\end{align}
where $C$ depends on $d$ and $p$ only.
\end{theorem}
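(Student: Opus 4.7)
The plan is to control $W = 1/u$ pointwise first, then derive $L^p$ bounds on $\nabla u$ via a boundary-layer analysis, and finally translate these into bounds on $\nabla W$ and $\Delta W$ through the algebraic identities $\nabla W = -u^{-2}\nabla u$ and $\Delta W = -u^{-2}\Delta u + 2u^{-3}|\nabla u|^2$. As a preliminary, $\|\phi\|_{H^2}\lesssim\delta$ and the embedding $H^2\hookrightarrow L^\infty$ in dimension three yield $\|\phi\|_{L^\infty}\lesssim\delta\ll V_{\min}$, so $V-\phi$ is strictly positive with $V-\phi\approx V$ pointwise. The maximum principle applied to $(-\eps^2\Delta+V-\phi)u=1$ then gives $1/(V-\phi)_{\max}\leq u\leq 1/(V-\phi)_{\min}$, hence $u\approx V_{\max}^{-1}$ and $W\approx V_{\max}$ uniformly. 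The lower bound $u\gtrsim V_{\max}^{-1}$ is the crucial input that converts any $L^p$ control of $\nabla u$ into $L^p$ control of $\nabla W$.

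The heart of the argument is a boundary-layer estimate for $\nabla u$. I would decompose $u = u_0 + r$ where $u_0 := 1/(V-\phi)$ is the pointwise inversion; then $r$ satisfies $(-\eps^2\Delta+V-\phi)r = \eps^2 \Delta u_0$ in the sense of distributions. The right-hand side is a surface measure on the jump set $\Sigma = \bigcup_j\partial([0,1)^3+j)$ of $V$, of total variation $O(\delta/V_{\max}^2)$ per unit cell, plus a smooth bulk contribution coming from $\phi$ of controlled size. Since the resolvent $(-\eps^2\Delta+V-\phi)^{-1}$ has a Green's function that decays exponentially on the Agmon scale $\ell := \eps/\sqrt{V_{\max}}$, the correction $r$ is concentrated in a tubular $\ell$-neighborhood $T_\ell$ of $\Sigma$ with $|\nabla u|\sim \delta\, V_{\max}^{-3/2}/\eps$ inside $T_\ell$, and $|T_\ell\cap\Om|\lesssim \eps/\sqrt{V_{\max}}$. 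Integrating and multiplying by $u^{-2}\approx V_{\max}^2$ gives \eqref{eqn:Lp-of-nabla-u}. For the second estimate I use $\eps^2\Delta u = (V-\phi)u-1$ from the Landscape equation to rewrite
\[
\Delta W = \eps^{-2} W\bigl(W-(V-\phi)\bigr) + 2 u^{-3}|\nabla u|^2.
\]
The first term is handled by the same boundary-layer analysis (expressing $W-(V-\phi)$ through the correction $r$), and the second by applying the $\nabla u$ bound with $p$ replaced by $2p$ and using $u^{-3}\approx V_{\max}^3$. Combining the two contributions yields \eqref{eqn:Lp-of-Delta-u}.

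The main obstacle is making the boundary-layer step quantitatively sharp: the source $\eps^2\Delta u_0$ is a surface measure on $\Sigma$, and one needs resolvent bounds for $-\eps^2\Delta+V-\phi$ with piecewise constant $V$ that respect both the Agmon exponential decay and the precise interface geometry, tracking the exact powers of $\delta$, $V_{\max}$, and $\eps$. The cleanest route is probably to mollify $V$ on a scale $\eta\ll \ell$, prove the estimates uniformly in $\eta$ by a localized Caccioppoli/energy argument or equivalently by direct Green's function bounds for the mollified operator, and then pass $\eta\to 0$. Care is needed to separate the contribution of $\phi$ (a smooth perturbation of size $\delta$) from that of the $V$-jumps, since it is the latter that produces the correct $\eps$-power in the final bound.
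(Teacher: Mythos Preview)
Your outline shares the paper's architecture---decompose $u$ around the pointwise inverse of the potential, recognize that the correction lives in a thin layer around the jump set of $V$, then transfer the bounds to $W$ via $\nabla W=-u^{-2}\nabla u$ and $\Delta W=-u^{-2}\Delta u+2u^{-3}|\nabla u|^2$---but the execution differs. The paper does not work with the bare $u_0=1/(V-\phi)$ and surface measures; instead it takes a mollified proxy $u_\eps=\eta_\eps*(1/(V-\phi))$ at scale $\eps$, so that all quantities are smooth, and controls the remainder $u'=u-u_\eps$ by a direct energy estimate $\langle u',Hu'\rangle\ge\eps^2\|\nabla u'\|_2^2+\tfrac12 V_{\min}\|u'\|_2^2$, with $Hu'=\eps^2\Delta u_\eps+1-(V-\phi)u_\eps$ bounded explicitly on $\Omega_\eps$ and $\Omega_\eps^c$. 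This yields the $L^2$ bound on $\nabla u$. For the $L^\infty$ bound, the paper rescales to remove $\eps$, writes $V-\phi=V_0+\delta V'$, expands $u$ as a Neumann series in $\delta V'$, and bounds each term using the explicit Yukawa kernel $e^{-\sqrt{V_0}|x|}/(4\pi|x|)$ and Young's inequality---this is where the Green's function enters, and it plays the role of your Agmon estimate but only at the level of $L^1$ convolution bounds. The general $L^p$ case then follows by plain interpolation between $p=2$ and $p=\infty$, and $\Delta u$ is read off directly from the equation $\eps^2\Delta u=(V-\phi)u-1$.

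Your Agmon/boundary-layer route would certainly work, but as you note, making it quantitative with a piecewise-constant potential requires either freezing coefficients (which is exactly the paper's Neumann series) or a Caccioppoli argument that you would in the end reduce to an energy estimate anyway. The paper's choice to mollify at scale $\eps$ and split $L^2$/$L^\infty$ sidesteps the surface-measure bookkeeping and makes the powers of $\delta,V_{\max},\eps$ fall out of elementary inequalities; your approach, done carefully at the sharper Agmon scale $\ell=\eps/\sqrt{V_{\max}}$, might even give slightly better $V_{\max}$-exponents, but since $\delta\ll V_{\min}$ forces $V_{\max}\approx V_{\min}$ this refinement is immaterial for the applications downstream.
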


We start by estimating $\nabla^s u$ in the $L^2$ and $L^\infty$ norms first for $s=0,1,2$ below. Theorem \ref{thm:useful-L-p-est-on-DW} is proved at the end of this section by interpolation. As a remark, we will write the $L^p(\Om)$ and $H^s(\Om)$ norms as $\| \cdot \|_p$ and $\| \cdot \|_{H^s}$, respectively, when there is no ambiguity.

\begin{proposition} \label{pro:Hs-estimate-u}
Suppose that $V$ is {\pname} and $\phi \in H^2(\Om)$. Moreover, assume that $\epsilon \ll \delta \ll V_{\rm min} \leq 1$ and $\|\phi\|_{H^2(\Om)} \lesssim \delta$. 
If $u$ solves \eqref{eqn:Landscape-original}  with periodic boundary condition on $\Om$, then there is a constant $C$ such that 
\begin{align}
    & \|u\|_{L^2(\Om)}\le C\frac{1}{V_{\min}},  \label{eqn:pro:Hs-estimate-u-0} \\ 
    & \|\nabla u\|_{L^2(\Om)}\le C\frac{\delta}{V_{\min}^{2}}\varepsilon^{-1/2},   \label{eqn:pro:Hs-estimate-u-1} \\ 
    & \|\Delta u\|_{L^2(\Om)}\le C\frac{\delta}{V_{\min}^{3/2}}\varepsilon^{-3/2}. \label{eqn:pro:Hs-estimate-u-2}
\end{align}
\end{proposition}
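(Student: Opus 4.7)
The plan is to establish the three estimates in sequence, starting with the $L^2$ bound on $u$ via the maximum principle and then obtaining the two derivative bounds via a boundary-layer comparison.

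For \eqref{eqn:pro:Hs-estimate-u-0}, I would invoke the maximum principle on \eqref{eqn:Landscape-original}: at a maximum point $x_0$ of $u$ one has $-\Delta u(x_0)\ge 0$, so $(V-\phi)(x_0)\,u(x_0)\le 1$. The hypothesis $\|\phi\|_{H^2(\Om)}\lesssim\delta$ combined with the Sobolev embedding $H^2(\Om)\hookrightarrow L^\infty(\Om)$ in $d=3$ gives $\|\phi\|_\infty\lesssim\delta\ll V_{\min}$, so $V-\phi\gtrsim V_{\min}$ and hence $u\lesssim V_{\min}^{-1}$ pointwise. Since $|\Om|$ is fixed, \eqref{eqn:pro:Hs-estimate-u-0} is immediate.

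For \eqref{eqn:pro:Hs-estimate-u-1} and \eqref{eqn:pro:Hs-estimate-u-2}, the naive energy identity (pairing \eqref{eqn:Landscape-original} with $u$) only produces $\|\nabla u\|_2\lesssim \eps^{-1}V_{\min}^{-1/2}$, which is too weak. The sharper $\eps^{-1/2}$ and $\eps^{-3/2}$ scalings reflect that $u$ stays close to the piecewise-smooth equilibrium $u_0 := 1/(V-\phi)$ outside a boundary layer of width $\sim \eps/\sqrt{V_{\min}}$ around the interfaces of $V$. I would construct a comparison function $u_\star\in H^2(\Om)$ by taking $u_0$ on the interior of each cell $B_j$ and matching it to a one-dimensional boundary-layer profile (essentially $u_0 + A_\pm \exp(-|x_\perp|\sqrt{V-\phi}/\eps)$ with constants $A_\pm$ chosen for continuity) in a tubular neighborhood of each interface. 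Since $u_0$ jumps by $\lesssim \delta V_{\min}^{-2}$ across interfaces and the layer has width $\sim \eps/\sqrt{V_{\min}}$, a direct computation gives $\|\nabla u_\star\|_2 \lesssim \delta V_{\min}^{-2}\eps^{-1/2}$ and $\|\Delta u_\star\|_2\lesssim \delta V_{\min}^{-3/2}\eps^{-3/2}$, matching the target bounds. Setting $v := u - u_\star$, the residual $r := 1-(V-\phi)u_\star+\eps^2\Delta u_\star$ is concentrated in the boundary layer and small in $L^2$. Testing $(-\eps^2\Delta+V-\phi)v = r$ against $v$ and using the coercivity $V-\phi\gtrsim V_{\min}$ yields $\eps^2\|\nabla v\|_2^2 + V_{\min}\|v\|_2^2 \lesssim \|r\|_2\|v\|_2$, so $\|\nabla v\|_2$ is controlled by $\|\nabla u_\star\|_2$ up to lower-order corrections. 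A triangle inequality yields \eqref{eqn:pro:Hs-estimate-u-1}, and \eqref{eqn:pro:Hs-estimate-u-2} then follows from the equation itself: $\eps^2\Delta u = (V-\phi)(u-u_0)$, so $\eps^2\|\Delta u\|_2\le V_{\max}\|u-u_0\|_2$, where $u-u_0 = v + (u_\star - u_0)$ are both essentially boundary-layer quantities.

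The main difficulty is the construction and detailed analysis of $u_\star$: one needs a global $H^2$ function respecting the interface geometry of $V$, with the correct $\eps^{-1/2}$ gradient scaling and a residual $r$ small enough in $L^2$ to close the estimates without losing the $\delta$ factor. A matched-asymptotic boundary-layer Ansatz accomplishes this; the remaining ingredients (maximum principle, energy identity, Sobolev embedding, triangle inequality) are standard.
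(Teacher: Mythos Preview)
Your overall strategy---compare $u$ with an explicit approximation that is close to $1/(V-\phi)$ away from the interfaces, then control the difference by an energy estimate and read off $\|\Delta u\|_2$ from the equation---matches the paper's. The substantive difference is in the choice of comparison function. The paper does not build a matched-asymptotic boundary-layer profile; instead it simply mollifies, setting $u_\eps := \eta_\eps * (1/(V-\phi))$ with a standard bump $\eta_\eps$ of width $\eps$. One then checks directly (Young's inequality and the splitting $\Omega = \Omega_\eps \cup \Omega_\eps^C$ into an $\eps$-tube around the interfaces and its complement) that $\|\nabla u_\eps\|_2 \lesssim \delta V_{\min}^{-2}\eps^{-1/2}$ and $\|1-(V-\phi)u_\eps\|_2 \lesssim \delta V_{\min}^{-1}\eps^{1/2}$, after which the energy identity for $u-u_\eps$ closes exactly as you describe. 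This buys a considerable simplification: no exponential profiles, no matching conditions, no interface geometry beyond the crude volume bound $|\Omega_\eps|\lesssim \eps$, and $u_\eps$ is automatically smooth and periodic. Your boundary-layer Ansatz would also work and is closer to the true local behavior of $u$, but the construction (a global $H^2$ patching along all faces, edges, and corners of the lattice cells, with the correct residual size) is genuinely more laborious.

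One small caveat on \eqref{eqn:pro:Hs-estimate-u-0}: since $V$ is only piecewise constant, $u$ need not be $C^2$ at an interface, so the pointwise second-derivative test at a maximum is not literally available. The paper sidesteps this by using the operator lower bound $-\eps^2\Delta + V - \phi \ge V_{\min} - \|\phi\|_\infty \gtrsim V_{\min}$ on $L^2(\Omega)$, which gives $\|u\|_2 \lesssim V_{\min}^{-1}\|1\|_2$ directly. Your argument can be repaired (weak maximum principle, or note the maximum cannot sit on an interface), but the operator bound is cleaner.
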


\begin{proof}[Proof of Proposition \ref{pro:Hs-estimate-u}]
The first inequality \eqref{eqn:pro:Hs-estimate-u-0} follows from the fact that the Hamiltonian $H=-\eps^2 \Delta + V-\phi$ is bounded below by  $V_{\min}-C\delta \gtrsim \frac{1}{2}V_{\min}$ for some constant $C$. We prove \eqref{eqn:pro:Hs-estimate-u-1} and \eqref{eqn:pro:Hs-estimate-u-2} below.

Notice that $V$ is only discontinuous on a subset $\Om_0=\{x=(x_1,\cdots,x_d)\in \Om:\  x_j\in \Z\} \subset \Om$ and piecewise constant elsewhere. Let $\Omega_\eps$ be the $\eps$ neighborhood of the discontinuities of $V$:
\begin{equation}
    \Omega_\epsilon =\left\{ x\in \Om:\, |x-y|\le \eps,\ y\in \Om_0 \right\}. \label{eqn:Om-eps-def}
\end{equation}
It is easy to check that $|\Omega_\eps|\lesssim \eps\cdot L^d$ (where recall that $\Om$ is diffeomorphic to $[0, L]^d$). 
Let $\eta_\eps$ be a standard smooth bump function supported on $B_{\eps/2}(0)$ such that
\begin{equation}
   0\le \eta_\eps(x)\le  \eta_\eps(0)= \eps^{-d}, \|\eta_\eps\|_{L^1(\R^3)}= 1, \ |\nabla \eta_\eps|\lesssim \eps^{-d-1}. 
\end{equation}

First, we prove \eqref{eqn:pro:Hs-estimate-u-1} for $u_\eps:=\eta_\eps*( 1/{\wt V})$ where $\wt V=V-\phi$. Then we use $u_\eps$ to approximate $u$ for our estimates. Let $V_0$ denote the average of $\wt  V$ on $\Om$ and $\delta = V_{\rm max} - V_{\rm min}$.  Since $\|\phi\|_{H^2} \lesssim \delta$ by assumption,
\begin{equation}
    \|\widetilde V - V_0\|_{\infty} \lesssim \delta. \label{eqn:Vtilde-V-0-close}
\end{equation}
 
We rewrite 
\begin{equation}
    u_\eps = \frac{1}{V_0} + u_\epsilon', \label{eqn:u-epsilon-expand-def}
\end{equation}
where 
\begin{equation}
    u_\eps' = \eta_\eps * \left( \frac{1}{ \wt  V} - \frac{1}{ V_0} \right). \label{eqn:u-epsilon-prime-def}
\end{equation}
For any $x\in \Omega$, using \eqref{eqn:Vtilde-V-0-close}, we see that 
\begin{align*}
    |\nabla u_\eps'(x)|
    \le & \int \left|\nabla\eta_\eps(x-y)\, \left( \frac{1}{ \wt  V} - \frac{1}{V_0} \right)(y) \right|\, \rmd y \\
  \le & \max|\nabla\eta_\eps|  \frac{1}{V_{\min}^2}\, \int_{B_{\eps/2}(x)} |(\wt V-V_0)(y)|\, \rmd y  \\
    \le & \max|\nabla\eta_\eps|   \frac{1}{V_{\min}^2}\,   \|\wt V-V_0\|_\infty   |{B_{\eps/2}(x)} | \\
    \lesssim & \delta V_{\min}^{-2}\eps^{d}\eps^{-d-1} \\
    =& \delta\,  V_{\min}^{-2}\, \eps^{-1}.
\end{align*}
Thus, on $\Om_\eps$, we have 
\begin{align}
 \int_{\Omega_\eps}   |\nabla u_\eps'|^2\, \rmd x\, \lesssim \frac{\delta^2}{V_{\min}^4} \int_{\Omega_\eps}   |\eps^{-1}|^2\, \rmd x  
 \lesssim& \,  \frac{\delta^2}{V_{\min}^4} \eps^{-2}|\Omega_\eps| \\
 \le& \,  C\frac{\delta^2}{V_{\min}^4} \eps^{-1},  \label{eqn:grad-u-Om} 
\end{align} for some constant $C$. On the other hand, on $\Omega_\eps^C$, 
\begin{equation*}
    |\nabla u'_\eps|=\left|\, \eta_\eps*\frac{\nabla (\wt V)}{\wt V^2}\, \right|=\left|\, \eta_\eps*\frac{\nabla \phi}{\wt V^2}\, \right|\lesssim \frac{1}{V_{\min}^2}\, \left|\eta_\eps* {\nabla \phi}  \right|. 
\end{equation*}
It follows that
\begin{align}
    \int_{\Om_\eps^C}  |\nabla u'_\eps|^2\, \rmd x\lesssim& \frac{1}{V_{\min}^4} \int_{\Om_\eps^C}  \left|\eta_\eps* {\nabla \phi}  \right|^2\, \rmd x \\ \lesssim& \frac{1}{V_{\min}^4} \,  \|\eta_\eps\|_{L^1(\R^3)}^2 \|\nabla \, \phi\|_2^2 \\
  \lesssim&   \frac{\delta^2}{V_{\min}^4}. \label{eqn:grad-u-Om-c}  
\end{align} 
Combining \eqref{eqn:grad-u-Om} and \eqref{eqn:grad-u-Om-c}, we see that
\begin{equation}
       \|\nabla u_\eps\|_{2}=\|\nabla u'_\eps\|_{2}\le C\, \delta \,  V_{\min}^{-2}\, \eps^{-1/2}.  \label{eqn:u-eps-est}  
\end{equation}

Next, we decompose 
\begin{equation}
     u=u_\eps+u', \label{eqn:u-decomp-u-eps-u-prime}   
\end{equation}
where $u'$ is defined by this expression. We will control the size of $u'$ using energy estimates. We note that
\begin{equation}
   \ipc{u'}{Hu'}\ge \eps^2\|\nabla u'\|_2^2+\|\sqrt{V-\phi}\, u'\|_2^2   \ge \eps^2\|\nabla u'\|_2^2+\frac{1}{2}V_{\min}\|u'\|_2^2    .\label{eq:u'1} 
\end{equation}
 This provides an energy lower bound.  On the other hand,  
 \begin{equation}
      Hu'=Hu-Hu_\eps=\eps^2\Delta u_\eps+1-\, \wt V\, u_\eps. \label{eqn:equation-for-u-prime}
 \end{equation}
It follows that
\begin{align}
    |\ipc{u'}{Hu'}|\le& \Big|\ipc{\nabla u'}{\eps^2\nabla u_\eps}\Big|+\Big|\ipc{u'}{1-\, \wt V\, u_\eps}\Big|\\
    \le & \eps^2\|\nabla u'\|_{2}\, \| \nabla u_\eps\|_{2}\, +\, \|u'\|_{2}\, \|{1-\, \wt V\, u_\eps}\|_2 . \label{eq:u'2}
\end{align}
 This is an energy upper bound. 
We now estimate the term $\|{1-\, \wt V\, u_\eps}\|_2$. We write $\wt  V = V_0 + \delta V'$ where $V_0$ is the mean of $V$ on $\Om$ and $V'$ is defined by this expression with
\begin{equation}
    \|V'\|_{\infty} \lesssim 1. \label{eqn:V-prime-O-1} 
\end{equation}
 Together with \eqref{eqn:u-epsilon-expand-def}, we note that
 \begin{equation}
      \wt  V u_\eps-1 =    (V_0 + \delta V')(1/V_0 + u_\eps')-1 = \delta \frac{V'}{V_0} + V_0 u_\eps' + \delta V' u_\eps'. \label{eqn:Vtilde-u-ep-1}
 \end{equation}
Using \eqref{eqn:Vtilde-V-0-close} and applying Young's inequality to \eqref{eqn:u-epsilon-prime-def}, we see that 
\begin{equation}
     \|u_\eps'\|_\infty\lesssim \frac{1}{V_{\min}^2}\|\wt V-V_0\|_{\infty}\lesssim \frac{\delta}{V_{\min}^2}. \label{eqn:u-eps-prime-sup} 
\end{equation}
Applying \eqref{eqn:V-prime-O-1} and \eqref{eqn:u-eps-prime-sup} to \eqref{eqn:Vtilde-u-ep-1}, we see that 
 \begin{equation}
      \|1- \wt  V u_\eps\|_\infty \lesssim {\delta}/{V_{\min}}. \label{eqn:1-minus-Vtilde-u-eps}  
 \end{equation}
Thus, on $\Omega_\eps$, 
\begin{equation}
    \int_{\Omega_\eps}|1-\, \wt V\, u_\eps|^2 \, \rmd x \lesssim \,  
  \frac{\delta^2}{V_{\min}^2}  |\Omega_\eps|
\lesssim\,   \frac{\delta^2}{V_{\min}^2}\, \epsilon. \label{eqn:1-Vu-Om}  
\end{equation}

For any $x \in \Omega_\eps^C$, 
\begin{align}
  \left| \frac{1}{\wt  V}(x)-u_\eps(x)\right|  \le & \,  \int_{\R^3} |\eta_\eps(y)|  \left|\frac{1}{\wt V}(x)-\frac{1}{\wt V}(x-y)\right|\, \rmd y \\
  \lesssim & \, \frac{1}{V_{\min}^2}\, \int_{\R^3} |\eta_\eps(y)|  \left|\phi(x)-\phi(x-y)\right|\, \rmd y . \label{eqn:1-by-V-minus-u-eps}
\end{align}
We remark that the domain of integration is in fact $B_{\epsilon/2}$ since $\eta_\eps$ is supported on a ball of radius $\epsilon/2$ at the origin. To estimate the last line, we have
\begin{align}
    |\phi(x) - \phi(x-y)| \leq& \int_0^1 |\nabla \phi(x-ty) \cdot y| dt \\
    \leq & |y| \int_0^1 |\nabla \phi(x-ty)| dt.  \label{eqn:phix-phix-y}
\end{align}
 Since $\eta_\eps(y)$ has support of radius $O(\epsilon)$ centered at the origin, it follows from equations \eqref{eqn:1-by-V-minus-u-eps} and \eqref{eqn:phix-phix-y} that 
\begin{align}
  \int_{\Omega^C_\eps}|1-\, \wt V\, u_\eps|^2 \, \rmd x \lesssim & \, V^2_{\max}\int_{\Omega^C_\eps} \left|\frac{1}{\wt  V}(x)-u_\eps(x) \right|^2 \, \rmd x \\
  \lesssim & \, \frac{\epsilon^2 V^2_{\max}}{V_{\min}^2} \int_{\Om_\eps^C}  \, \rmd x \left( \int_{\R^3}  \rmd y \int_0^1 dt |\eta_\eps(y)||\nabla \phi(x-ty)| \right)^2 \label{eqn:1-Vu-1}. 
\end{align}
We perform H\"{o}lder's inequality (in the $dtdy$-integral) on the integrand $|\eta_\eps(y)||\nabla \phi(x-ty)|$ via the grouping
\begin{equation}
    |\eta_\eps(y)||\nabla \phi(x-ty)| = |\eta_\eps(y)|^{1/2}   (|\eta_\eps(y)|^{1/2} |\nabla \phi(x-ty)|)  
\end{equation}
to obtain
\begin{align}
  &  \hspace{-2cm}\left( \int_{\R^3}  \rmd y  \int_0^1 dt |\eta_\eps(y)||\nabla \phi(x-ty)| \right)^2 \notag \\
    &\leq  \left( \int_{\R^3}  \rmd y \int_0^1 dt |\eta_\eps(y)| \right) \left(\int_{\R^3}  \rmd y \int_0^1 dt |\eta_\eps(y)||\nabla \phi(x-ty)|^2 \right) \\
    &=  \int_{\R^3}  \rmd y \int_0^1 dt |\eta_\eps(y)||\nabla \phi(x-ty)|^2.
\end{align}
Inserting this into equation \eqref{eqn:1-Vu-1}, we obtain
\begin{align}
  \int_{\Omega^C_\eps}|1-\, \wt V\, u_\eps|^2 \, \rmd x  \lesssim & \frac{\epsilon^2 V^2_{\max}}{V_{\min}^2} \int_{\Om_\eps^C}  \, \rmd x \int_{\R^3}  \rmd y \int_0^1 dt |\eta_\eps(y)||\nabla \phi(x-ty)|^2 \\
  \leq & C\, \epsilon^2 \|\nabla \phi\|_{2}^2\\
  \le& C\, \epsilon^2 \delta^2. \label{eqn:1-Vu-Om-C-1}  
\end{align}
Combining the estimates on $\Om_\eps$ \eqref{eqn:1-Vu-Om} and $\Om_\eps^C$ \eqref{eqn:1-Vu-Om-C-1}, we have 
\begin{equation}
       \|1-\wt V\, u_\eps\|_2 \le C\,V_{\min}^{-1} \delta\, \eps^{1/2}.  \label{eqn:1-Vu-Om-C}
\end{equation}
Together with \eqref{eq:u'1} and \eqref{eq:u'2}, and using $2ab \leq a^2+b^2$ for any real numbers $a,b$, we see that
\begin{align}
 \eps^2\|\nabla u'\|_2^2&+\frac{1}{2}V_{\min}\|u'\|_2^2  \label{eqn:energy-left} \\ 
 \le &\,  (\eps\|\nabla u'\|_{2})   (\eps\, C\,\delta V_{\min}^{-2}\eps^{-1/2})+(C\,\delta\, \eps^{1/2}V_{\min}^{-1}\, V_{\min}^{-1/2})   (V_{\min}^{1/2} \|u'\|_{2}/2)\\
 \le & \,  \frac{3}{4}\eps^2\|\nabla u'\|_{2}^2+\frac{V_{\min}}{4}\|u'\|_{2}^2+C' \delta^2\, V_{\min}^{-4}\eps. \label{eqn:energy-right}
\end{align}
 Subtracting the first two terms of \eqref{eqn:energy-right} from both \eqref{eqn:energy-right} and \eqref{eqn:energy-left},  we see that 
 \begin{equation*}
       \eps^2\|\nabla u'\|_{2}^2+V_{\min}\|u'\|_2^2\le C\, \delta^2  \, V_{\min}^{-4}\eps.   
 \end{equation*}
Therefore, 
\begin{equation}
      \|\nabla u'\|_{2}\lesssim  \delta\, V_{\min}^{-2}\eps^{-1/2} \text{ and } \|u'\|_2 \lesssim \delta  V_{\min}^{-5/2} \eps^{1/2}. \label{eqn:u-prime-est}  
\end{equation}
Combining with \eqref{eqn:u-eps-est} and \eqref{eqn:u-decomp-u-eps-u-prime}, we see that
\begin{equation*}
   \|\nabla u\|_2 \leq C\, \delta \,  V_{\min}^{-2}\eps^{-1/2}  
\end{equation*}
for some constant $C$. This proves \eqref{eqn:pro:Hs-estimate-u-1}.

 Finally, we estimate the $L^2$ norm of $-\Delta u$. Recall that $\wt  V = V-\phi$. Using equations \eqref{eqn:Landscape-original}, \eqref{eqn:1-Vu-Om-C}, and \eqref{eqn:u-prime-est}, we see that 
\begin{align}
    \eps^2\|\Delta u\|_2=\|1-\wt V\, u\|_2\le& \|1-\wt V\, u_\eps\|_2+\|\wt V\, u'\|_2\\
    \le& C\, V_{\min}^{-1 }\,  \delta\, \eps^{1/2} +V_{\max}\, C\, \delta\,  V_{\min}^{-5/2 }\eps^{1/2}.
    \label{eqn:Delta-u-eqn-for-est}
\end{align}
Therefore, 
\begin{equation*}
   \eps^2\|\Delta u\|_2 \leq C\, \delta\, V_{\min}^{-3/2} \eps^{1/2} .    
\end{equation*}
Dividing both sides by $\eps^2$ proves \eqref{eqn:pro:Hs-estimate-u-2}.
\end{proof}

\begin{proposition} \label{prop:sup-norm-grad-u}
Suppose that $V$ is {\pname} and $\phi \in H^2(\Om)$. Moreover, assume that $\epsilon \ll \delta \ll V_{\rm min} \leq 1$ and $\|\phi\|_{H^2(\Om)} \lesssim \delta$. 
If $u$ solves \eqref{eqn:Landscape-original} with periodic boundary condition,  then
\begin{align}
    & \|\nabla u\|_{L^\infty(\Om)} \leq C \frac{\delta }{V_{\min}^{3/2}}  \epsilon^{-1}, \label{eqn:Lsup-nabla-u} \\
    & \|\Delta u\|_{L^\infty(\Om)} \leq C\frac{\delta}{V_{\min}} \epsilon^{-2}, \label{eqn:Delta-om-sup}
\end{align}
where $C$ depends on $d$ and $L$ only.
\end{proposition}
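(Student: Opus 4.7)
The plan is to carry the decomposition $u = u_\eps + u'$ from the proof of Proposition~\ref{pro:Hs-estimate-u} into the $L^\infty$ setting, using pointwise arguments for $u_\eps = \eta_\eps * (1/\wt V)$ and semiclassical $L^\infty$ elliptic regularity for the error $u'$. For $u_\eps$, several of the bounds established in the previous proof are already pointwise. Writing $u_\eps = 1/V_0 + u_\eps'$ as in \eqref{eqn:u-epsilon-expand-def}, the estimate $|\nabla u_\eps'(x)| \lesssim \delta V_{\min}^{-2}\eps^{-1}$ on $\Om_\eps$ derived during the proof of Proposition~\ref{pro:Hs-estimate-u} is already an $L^\infty$ bound there. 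On $\Om_\eps^C$ I would bound $|\nabla u_\eps'(x)| \lesssim V_{\min}^{-2}|(\eta_\eps*\nabla\phi)(x)|$ by Young's inequality in $L^\infty$, using the Sobolev embedding $H^1(\Om)\hookrightarrow L^6(\Om)$: $\|\eta_\eps*\nabla\phi\|_\infty \leq \|\eta_\eps\|_{6/5}\|\nabla\phi\|_6 \lesssim \eps^{-1/2}\delta$. For $\Delta u_\eps$, the cancellation $\int \Delta\eta_\eps = 0$ lets me rewrite $\Delta u_\eps = (\Delta\eta_\eps)*(1/\wt V - 1/V_0)$, and Young's inequality combined with the sup bound $\|1/\wt V - 1/V_0\|_\infty \lesssim \delta V_{\min}^{-2}$ from \eqref{eqn:u-eps-prime-sup} gives $\|\eps^2\Delta u_\eps\|_\infty \lesssim \delta V_{\min}^{-2}$.

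For the error $u'$, recall from \eqref{eqn:equation-for-u-prime} that $(-\eps^2\Delta + \wt V)u' = f$, where $f = \eps^2\Delta u_\eps + 1 - \wt V u_\eps$. Combining the pointwise bound $\|1-\wt V u_\eps\|_\infty \lesssim \delta V_{\min}^{-1}$ from \eqref{eqn:1-minus-Vtilde-u-eps} with the bound on $\|\eps^2\Delta u_\eps\|_\infty$ above yields $\|f\|_\infty \lesssim \delta V_{\min}^{-1}$ in the relevant regime. The remaining step is a semiclassical $L^\infty$-to-$L^\infty$ resolvent estimate for the positive operator $H = -\eps^2\Delta + \wt V$ on the torus. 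Since $\wt V \geq V_{\min}/2$, one expects the Green's function $G_\eps(x,y)$ of $H$ to satisfy the same $L^1_y$ kernel bounds as the explicit $\R^3$ Green's function $G_0(x,y) = (4\pi\eps^2)^{-1}|x-y|^{-1}e^{-cV_{\min}^{1/2}|x-y|/\eps}$ of $-\eps^2\Delta + V_{\min}/2$: namely $\sup_x\int G_\eps(x,y)\,dy \lesssim V_{\min}^{-1}$ and $\sup_x\int|\nabla_x G_\eps(x,y)|\,dy \lesssim \eps^{-1}V_{\min}^{-1/2}$. Assuming these, $\|u'\|_\infty \lesssim V_{\min}^{-1}\|f\|_\infty$ and $\|\nabla u'\|_\infty \lesssim \eps^{-1}V_{\min}^{-1/2}\|f\|_\infty \lesssim \delta V_{\min}^{-3/2}\eps^{-1}$. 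From the equation directly, $\|\Delta u'\|_\infty \lesssim \eps^{-2}(\|f\|_\infty + V_{\max}\|u'\|_\infty) \lesssim \delta V_{\min}^{-1}\eps^{-2}$. Summing the $u_\eps$ and $u'$ contributions yields \eqref{eqn:Lsup-nabla-u} and \eqref{eqn:Delta-om-sup}.

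The main obstacle is justifying the semiclassical $L^\infty$ resolvent bounds for $H$ with its discontinuous variable potential on $\Om$. In the constant-coefficient free setting on $\R^3$ the kernel inequalities reduce to direct integrals with $G_0$, and the periodization costs only exponentially small corrections because $G_0$ decays on scale $\eps V_{\min}^{-1/2}\ll L$. For the variable potential, a clean route is a Feynman-Kac domination of the heat kernel $e^{-tH}$ by that of $-\eps^2\Delta + V_{\min}/2$ (using $\wt V \geq V_{\min}/2 \geq 0$) followed by time integration $H^{-1} = \int_0^\infty e^{-tH}\,dt$; the gradient kernel bound then follows from a Duhamel expansion or from Gaussian gradient estimates for $e^{-tH}$. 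Once these semiclassical kernel bounds are in place, the rest of the argument is the bookkeeping sketched above.
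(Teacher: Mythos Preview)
Your approach is different from the paper's, and there is a quantitative gap in your bookkeeping that prevents you from reaching the stated powers of $V_{\min}$.

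The paper does not reuse the mollifier decomposition $u=u_\eps+u'$ at all. Instead it rescales $u(x)=\omega(\eps^{-1}x)$, so that $\omega$ solves $(-\Delta+V_\eps)\omega=1$ on the large torus $\eps^{-1}\Om$ with $V_\eps(x)=V(\eps x)-\phi(\eps x)$. Writing $V_\eps=V_0+\delta V_\eps'$ with $\|V_\eps'\|_\infty\le 1$ and $R=(-\Delta+V_0)^{-1}$, the paper expands $\omega=V_0^{-1}\sum_{n\ge 0}(-\delta)^n(RV_\eps')^n1$ as a Neumann series. The only Green's function needed is the explicit Yukawa kernel $g(x)=(4\pi|x|)^{-1}e^{-\sqrt{V_0}|x|}$ of the \emph{constant-coefficient} operator $-\Delta+V_0$, for which $\|g\|_{L^1}=V_0^{-1}$ and $\|\nabla g\|_{L^1}\lesssim V_0^{-1/2}$ are immediate. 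Summing the geometric series gives $\|\nabla\omega\|_\infty\lesssim\delta V_{\min}^{-3/2}$ directly, and $\|\Delta\omega\|_\infty=\|1-V_\eps\omega\|_\infty\lesssim\delta V_{\min}^{-1}$ follows from the same series. This sidesteps completely the variable-coefficient gradient resolvent issue you flag as the main obstacle: the Neumann series \emph{is} the clean way to prove the bound you want on $\nabla H^{-1}$, and once you have it you might as well apply it to $u$ itself rather than to the remainder $u'$.

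The concrete gap in your argument is the claim $\|f\|_\infty\lesssim\delta V_{\min}^{-1}$. Your own crude bound gives $\|\eps^2\Delta u_\eps\|_\infty\le\eps^2\|\Delta\eta_\eps\|_1\,\|1/\wt V-1/V_0\|_\infty\lesssim\delta V_{\min}^{-2}$, which dominates $\delta V_{\min}^{-1}$ since $V_{\min}\le 1$. Carrying $\|f\|_\infty\lesssim\delta V_{\min}^{-2}$ through your resolvent estimates yields $\|\nabla u'\|_\infty\lesssim\delta V_{\min}^{-5/2}\eps^{-1}$ and $\|\Delta u'\|_\infty\lesssim\delta V_{\min}^{-2}\eps^{-2}$, each one power of $V_{\min}^{-1}$ worse than \eqref{eqn:Lsup-nabla-u}--\eqref{eqn:Delta-om-sup}. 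I do not see how to recover this lost factor within the mollifier framework: improving $\|\eps^2\Delta u_\eps\|_\infty$ on $\Om_\eps$ would require cancellation you do not have, and transferring the $\eps^2\Delta$ onto $H^{-1}$ by integration by parts produces a Calder\'on--Zygmund-type operator that is not bounded on $L^\infty$. Since the proposition is later applied with the shifted potential $V-V_{\rm cut}$ for which $V_{\min}$ is replaced by $v_{\min}\approx\delta^{1/4}$, this loss is not cosmetic.
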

\begin{proof}[Proof of Proposition \ref{prop:sup-norm-grad-u}]
Define $\om$ via $u(x) = \om(\epsilon^{-1} x)$. Since $u$ solves the Landscape equation \eqref{eqn:Landscape-original}, $\om$ solves
\begin{equation*}
    (-\Delta + V_\epsilon) \om = 1, 
\end{equation*}
where $V_\eps(x) = V(\epsilon x) - \phi(\eps x)$. Moreover,
\begin{equation}
      \|\nabla^s u\|_{L^\infty(\Om)}  = \epsilon^{-s} \|\nabla^s \om \|_{L^\infty(\epsilon^{-1} \Om)}. \label{eqn:transfer-of-eps}  
\end{equation}
Consequently, we estimate the sup-norm of $\nabla \om$. As before, let $V_0$ denote the average of $V$ on $\Om$. We decompose
\begin{equation*}
     V_\epsilon = V_0 + \delta V_\eps', 
\end{equation*}
where $V_\eps'$ is defined by this expression. We remark that the mean of $V_\eps'$ over $\eps^{-1} \Om$ is $0$ and 
\begin{equation}
        -1 \leq V_\eps' \leq 1. \label{eqn:V-eps-prime-sup-bound}
\end{equation}
Let $H_0 = -\Delta + V_0$ and $R = H_0^{-1}$. We see that
\begin{align}
    \om =& (H_0 + \delta V_\eps' )^{-1} 1 \\
      =& \sum_{n \geq 0} (-1)^n \delta^n (RV_\eps')^n R 1 \\
      =& \frac{1}{V_0} \sum_{n \geq 0} (-1)^n \delta^n (RV_\eps')^n 1.  \label{eqn:omg-series-expansion}
\end{align}
It follows that
\begin{equation}
    \nabla \om =  \frac{1}{V_0} \sum_{n \geq 1} (-1)^n \delta^n \nabla (RV_\eps')^n 1 . \label{eqn:nabla-u-expansion}  
\end{equation}
We claim that
\begin{align}
    & \|R\|_ {L^\infty(\eps^{-1}\Om) \rightarrow L^\infty(\eps^{-1}\Om)} \leq \frac{C_1}{V_0} \label{eqn:R-Linfty},\\
    & \| \nabla R V_\eps' \|_{L^\infty} \leq C_2\frac{\delta}{\sqrt{V_0}}, \label{eqn:leading-resolvent-in-Linfty}
\end{align}
for some constants $C_1$ and $C_2$. For the sake of continuity, we defer the proof of the claims to the end of this section as they are simple corollaries of Young's inequality. Since the integral kernel of $R$ is positive and using equations \eqref{eqn:V-eps-prime-sup-bound}, \eqref{eqn:R-Linfty}, and \eqref{eqn:leading-resolvent-in-Linfty}, we see that \eqref{eqn:nabla-u-expansion} can be estimated as
\begin{equation*}
       \|\nabla \om \|_{\infty} \leq \frac{C}{V_0(1 -\delta/V_0)}   \frac{\delta}{\sqrt{V_0}} \leq C \frac{\delta}{V_{\min}^{3/2}},
\end{equation*}
since $\delta/V_0 \leq \delta/V_{\min} \ll 1$. Together with equation \eqref{eqn:transfer-of-eps}, equation \eqref{eqn:Lsup-nabla-u} is proved pending claims \eqref{eqn:R-Linfty} and \eqref{eqn:leading-resolvent-in-Linfty}.

Now we prove the claims \eqref{eqn:R-Linfty} and \eqref{eqn:leading-resolvent-in-Linfty}. Equation \eqref{eqn:R-Linfty} is standard. For the sake of completeness, we carry out the corresponding estimates. Let $g(x) = \frac{e^{-\sqrt{V_0}|x|}}{ 4\pi|x|}$. Then
\begin{equation*}
    (Rf)(x) = g * f.
\end{equation*}
It follows from Young's inequality  that
\begin{equation}
    \|R\|_ {L^\infty(\eps^{-1}\Om) \rightarrow L^\infty(\eps^{-1}\Om)} \leq C\int_{\R^3} dx \frac{e^{-\sqrt{V_0}|x|}}{4\pi|x|} = \frac{C}{V_0}  
\end{equation}
for some constant $C$. Similarly, we estimate \eqref{eqn:leading-resolvent-in-Linfty}. We note that
\begin{equation}
      (\nabla R V_\eps')(x) = (\nabla g) * V_\eps'.  
\end{equation}
Since 
\begin{equation}
      \int_{\R^3} |\nabla g| =  \int_{\R^3} dx g(x)(\sqrt{V_0}+|x|^{-1}) = \frac{C}{\sqrt{V_0}}
\end{equation}
for some universal constant $C$, claim \eqref{eqn:leading-resolvent-in-Linfty} follows by Young's inequality.

Finally, using 
\[ -\Delta \om  = 1-V_\eps \om,\]
we see that 
\begin{equation}
      \|\Delta \om\|_\infty = \|1-V_\eps \om\|_\infty. \label{eqn:Delta-om-1}  
\end{equation}
By equation \eqref{eqn:omg-series-expansion}, \eqref{eqn:R-Linfty}, and the fact $\delta = V_{\max} - V_{\min} \ll V_{\min}$,
\begin{equation}
     \|1-V_\eps \om\|_\infty \leq C\sum_{n \geq 1} \delta^n \|R\|_\infty^{n} \leq C_1 \frac{\delta/V_{\min}}{1-C_2\delta/V_{\min}} \leq C \frac{\delta}{V_{\min}},
\end{equation}
for some constant $C$. Together with \eqref{eqn:transfer-of-eps} and \eqref{eqn:Delta-om-1}, this proves \eqref{eqn:Delta-om-sup}. The proof of Proposition \ref{prop:sup-norm-grad-u} is now complete.
\end{proof}

\begin{proof}[Proof of Theorem \ref{thm:useful-L-p-est-on-DW}]
Since $W=1/u$,
\[  |\nabla W| = (1/u^2) |\nabla u| \leq V_{\max}^2 |\nabla u|.\]
We now interpolate between equations \eqref{eqn:pro:Hs-estimate-u-1} and  \eqref{eqn:Lsup-nabla-u}. Since $\delta = V_{\max} - V_{\min} \ll V_{\min}$, we see that
\[    \|\nabla W\|_p \leq V_{\max}^2 \|\nabla u\|_p \leq C\delta V_{\max}^{1/2-1/p} \eps^{-\frac{p-1}{p}}.\]
This proves \eqref{eqn:Lp-of-nabla-u}.
Differentiating $\nabla W$ once more, we see that
\begin{equation}
     |\Delta W| \leq 2W^3|\nabla u|^2 + W^2|\Delta u| \leq 2V_{\max}^3|\nabla u|^2 + V_{\max}^2|\Delta u|. \label{eqn:Delta-W-exact-form}
\end{equation}
Since
\begin{equation}
     \|\Delta W\|_p \leq \|\Delta W\|_\infty^{\frac{p-2}{p}} \|\Delta W\|_2^{2/p}, \label{eqn:Delta-W-split} 
\end{equation}
equations \eqref{eqn:Delta-W-exact-form}, \eqref{eqn:Lsup-nabla-u}, and \eqref{eqn:Delta-om-sup}, show that
\begin{equation}
        \|\Delta W\|_\infty \leq C\left( \delta^2 \eps^{-2} + V_{\max}\delta \eps^{-2} \right) \leq C \delta V_{\max} \eps^{-2}. \label{eqn:Delta-W-sup}
\end{equation}
Similarly, equations \eqref{eqn:Delta-W-exact-form}, \eqref{eqn:pro:Hs-estimate-u-1}, \eqref{eqn:Lsup-nabla-u}, and \eqref{eqn:Delta-om-sup} show that
\begin{align}
    \|\Delta W\|_2 \leq & \,  C( 2V_{\max}^3 \|\nabla u\|_\infty \|\nabla u\|_2 + V_{\max}^2 \|\Delta u\|_2) \\
    \leq &\,  C(\delta^2 V_{\max}^{-1/2} \eps^{-3/2} + \delta V_{\max}^{1/2} \eps^{-3/2}) \\
    \leq &\,  C\delta V_{\max}^{-1} \eps^{-3/2}. \label{eqn:Delta-W-2}
\end{align}
Combining \eqref{eqn:Delta-W-sup}, \eqref{eqn:Delta-W-2} and using \eqref{eqn:Delta-W-split}, we see that
\begin{equation}
     \|\Delta W\|_p \leq C \delta  V_{\min}^{1-4/p} \eps^{-\frac{2p-1}{p}}.  
\end{equation}
This proves \eqref{eqn:Lp-of-Delta-u}.
\end{proof}

\section{Leading order expansion of electron density} \label{sec:expansion}
 We first state a more general theorem from which Theorem \ref{thm:FD-leading-order} follows. Then, we prove Theorem \ref{thm:FD-leading-order} while delaying the proof of the more general theorem until the end of the section. 
Let 
\begin{equation}
    \mathbb{H}_c = \{ z \in \mathbb{C} : \Re z + c > 0 \}.  \label{eqn:H-c-def}
\end{equation}
We have the following general result.

\begin{theorem}\label{thm:semi-classical-leading-orders}
Let $ {2 \leq p < 3}$. Assume the following hypotheses hold.
\begin{enumerate}
    \item  Suppose that $f$ is analytic on $\mathbb{H}_c$ for some constant $c>0$ and
    \begin{equation}
       \int_{-c}^\infty |f(x+iy)| dx \lesssim O(1)  
    \end{equation}
    uniformly in $y$ for $y$ in on any compact set. 
    \item $v \geq 0$ is the sum of a {\pname} potential (see Definition \eqref{eqn:pname-def}) and an  $H^2(\Om; \R)$  function whose $H^2(\Om)$-norm is bounded by $O(\delta)$,  where $\delta = \sup v - \inf v$.
    \item  $\varphi \in H^2(\Om; \R)$  and $\|\varphi\|_{H^2(\Om)} \lesssim \delta $. 
    \item The parameters are related via
    \begin{align}
    &\eps \ll   \delta \ll v^{5/2}_{\rm max}, \\
        & v_{\rm max} \ll c.
    \end{align}
    \item $W=1/u$ denotes the Landscape potential where $u$ solves 
    \begin{equation}
        (-\epsilon^2\Delta+v)u = 1. \label{eqn:Landscape-small-v}   
    \end{equation}
\end{enumerate} 
Then,
\begin{equation}
  \den f(-\epsilon^2 \Delta + v - \varphi)  =  \frac{1}{(2\pi\epsilon)^{3}} \int_{\R^3} dp f(p^2 + W - \varphi) + \eps^{-3+1/p} \text{Rem}, \label{eqn:semi-classical-leading-orders}
\end{equation}
where 
\begin{equation}
    \|\text{Rem}\|_{L^p(\Om)} \leq C_p\int_{-Cv_{\max}}^\infty |f(z)|  \label{eqn:sc-error-est}
\end{equation}
for some $p$-dependent constant $C_p>0$.
\end{theorem}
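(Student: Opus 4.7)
The plan is to reduce the density $\den f(-\eps^2\Delta + v - \varphi)$ to a semi-classical expression in two stages: (i) exploit the isospectral Landscape conjugation to replace the singular potential $v$ (which is piecewise constant and hence has unbounded derivatives in any $L^p$ sense) by the much smoother Landscape potential $W = 1/u$, and (ii) apply a standard Weyl/parametrix-style semi-classical expansion to the resulting Schrödinger operator with $H^2$ potential. The master tool throughout is the Cauchy-type integral representation
\begin{equation*}
f(A) = \frac{1}{2\pi i}\int_\Gamma f(z)(z-A)^{-1}\,dz
\end{equation*}
valid for any self-adjoint $A$ with $\spec(A)\subset(-c,\infty)$, where $\Gamma\subset\mathbb{H}_c$ is chosen at a fixed distance (say $c/2$) from the spectrum. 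The assumptions on $v, \varphi$ and $v_{\max}\ll c$ guarantee both $H := -\eps^2\Delta + v - \varphi$ and $\tilde H := -\eps^2\Delta + W - \varphi$ have spectra bounded below by $-Cv_{\max} > -c$.

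First, I would note the key identity that conjugation by the multiplication operator $u$ preserves diagonal values: if $H_u := u^{-1}Hu = -\eps^2\Delta - 2\eps^2 u^{-1}\nabla u\cdot\nabla + W - \varphi$, then $f(H_u)(x,y) = u(x)^{-1}f(H)(x,y)u(y)$, so $\den f(H) = \den f(H_u)$. Next, writing $H_u = \tilde H + B$ with $B := -2\eps^2 u^{-1}\nabla u\cdot\nabla$, the second resolvent identity gives
\begin{equation*}
(z-H_u)^{-1} - (z-\tilde H)^{-1} = (z-\tilde H)^{-1}\,B\,(z-H_u)^{-1},
\end{equation*}
so that $\den[f(H_u) - f(\tilde H)]$ is expressed as a contour integral whose integrand is controlled by $\nabla u/u = -\nabla W \cdot u$, hence by the $L^p$ bounds on $\nabla W$ in Theorem \ref{thm:useful-L-p-est-on-DW}.

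Second, for the density of $f(\tilde H)$ I would use a semi-classical parametrix for $(z-\tilde H)^{-1}$. Writing the resolvent in terms of its symbol and freezing $\tilde V(x) := W(x) - \varphi(x)$ at the evaluation point, the leading-order Schwartz kernel on the diagonal is
\begin{equation*}
(z-\tilde H)^{-1}(x,x) \approx \frac{1}{(2\pi\eps)^3}\int_{\R^3}\frac{dp}{z - p^2 - \tilde V(x)},
\end{equation*}
with correction terms involving $\eps\nabla\tilde V$ and $\eps^2\Delta\tilde V$. Integrating against $f(z)$ around $\Gamma$ produces the claimed leading term $\frac{1}{(2\pi\eps)^3}\int f(p^2 + W - \varphi)\,dp$, and the correction terms are bounded in $L^p(\Om)$ via Theorem \ref{thm:useful-L-p-est-on-DW}, which gives $\|\nabla W\|_p \lesssim \delta\, v_{\max}^{1/2-1/p}\eps^{-(p-1)/p}$ and the analogous bound on $\Delta W$. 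These produce an overall factor of $\eps^{-3+1/p}$, matching the stated rate.

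The main obstacle will be making the parametrix remainders uniform in $z$ along the unbounded contour $\Gamma$ and packaging them as $L^p(\Om)$ estimates on the density rather than merely pointwise. One needs resolvent bounds such as $\|(z-\tilde H)^{-1}\|_{L^2\to L^2}\lesssim (1+|\Im z|)^{-1}$, together with Schatten-class control so that the diagonal $\den[\,\cdot\,]$ of a product of resolvents is a well-defined function with controlled $L^p$-norm; only then can one conclude $\|{\rm Rem}\|_{L^p}\lesssim \int_\Gamma |f(z)|\,|dz|$, which after contour-deformation along the natural vertical line $\Re z = -Cv_{\max}$ yields \eqref{eqn:sc-error-est}. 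Careful interpolation between the $L^2$ and $L^\infty$ bounds of Propositions \ref{pro:Hs-estimate-u} and \ref{prop:sup-norm-grad-u} — exactly as used in the proof of Theorem \ref{thm:useful-L-p-est-on-DW} — is what ties the range $2 \leq p < 3$ to the integrability $\eps^{-3+1/p}$ in the final error.
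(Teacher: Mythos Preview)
Your plan is essentially the paper's, with the ingredients correctly identified: the Landscape conjugation $H \mapsto u^{-1}Hu$ to remove the singular $v$, the Cauchy integral to access resolvents, and Theorem~\ref{thm:useful-L-p-est-on-DW} together with Schatten/Kato--Seiler--Simon bounds to control the density of the remainder in $L^p(\Om)$. The one organizational difference is that you insert an intermediate \emph{self-adjoint} comparison operator $\tilde H = -\eps^2\Delta + W - \varphi$ and split the error into a ``drift'' piece $(z-\tilde H)^{-1}B(z-H_u)^{-1}$ and a subsequent parametrix remainder for $\tilde H$; the paper instead never introduces $\tilde H$ but expands $(z-H_u)^{-1}$ directly as a Neumann series in the full $U = 2\eps^2 W^{-1}\nabla W\cdot\nabla + (W-\varphi)$ and peels off the explicit ``frozen-coefficient'' piece $R_L(\tilde W)=\sum_{n\ge0}\tilde W^n R^{n+1}$, whose $\den$ is computed termwise to give the leading semiclassical integral exactly. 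Your two error terms then correspond precisely to the paper's $W_{11}$-terms (your drift $B$) and $[R,\tilde W]$-commutator terms (your parametrix remainder) in Lemma~\ref{lem:submain-lemma}. Both routes are sound; the paper's single decomposition avoids bounding $(z-\tilde H)^{-1}$ separately and makes the bookkeeping of the $n$-th order terms (and hence the summability condition $d(z)\gtrsim v_{\max}$) more explicit, while your two-stage version has the conceptual advantage of isolating a genuinely self-adjoint comparison Hamiltonian.

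One point to tighten: you propose the contour at distance $c/2$ and then deform. In fact the Neumann-series estimate (the analogue of Lemma~\ref{lem:submain-lemma}) only yields a useful bound when $d(z)\gtrsim v_{\max}$, and one must also keep track of the prefactor $\delta\, v_{\max}^{-1/2-1/p}/d(z)$; it is the hypothesis $\delta \ll v_{\max}^{5/2}$ that finally absorbs these factors and leaves the clean $\eps^{-3+1/p}$. You should make this last step explicit, since otherwise the claimed bound on $\mathrm{Rem}$ does not follow from the $\nabla W$ and $\Delta W$ estimates alone.
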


\begin{proof}[Proof of Theorem \ref{thm:FD-leading-order}]

Let $V$ and $\phi$ be as given through the assumptions of Theorem \ref{thm:FD-leading-order}. We would like to apply Theorem \ref{thm:semi-classical-leading-orders} to both \eqref{eqn:FD-semi-classical-leading-orders} and \eqref{eqn:FD-semi-classical-leading-orders-2} simultaneously. 

We note that the Fermi-Dirac function has poles on the imaginary axis in $i\pi \Z$. Thus, we decompose $V$ as
\begin{equation}
    V =  V_{\rm cut} + v,
\end{equation}
where $V_{\min} > V_{\rm cut} \in \R > \mu$, $v > 0$, and we choose $V_{\rm cut}$ such that
\begin{equation}
    v_{\max}  = \ C^{-1} \delta^{1/4},
\end{equation}
where $C$ is the constant given in lower bound of the integral in \eqref{eqn:sc-error-est}. Consequently, we pick $f(z)$ in Theorem \ref{thm:semi-classical-leading-orders} to be
\begin{equation}
    f(z) =  f_{\rm FD}(\beta(z+V_{\rm cut}- \mu)).
\end{equation}
Thus, $\mathbb{H}_c$ is chosen with $c = V_{\rm max}- \mu$.

To prove \eqref{eqn:FD-semi-classical-leading-orders}, we apply Theorem \ref{thm:semi-classical-leading-orders} with the potential $v = V-V_{\rm cut}-\phi$ and $\varphi = 0$. To prove \eqref{eqn:FD-semi-classical-leading-orders-2}, we apply Theorem \ref{thm:semi-classical-leading-orders} with $v = V-V_{\rm cut}$ and $\varphi = \phi$. Finally, we check that the rest of the assumptions of Theorem \ref{thm:semi-classical-leading-orders} are satisfied for the above choices.

By Assumption 2 of Theorem \ref{thm:FD-leading-order} and the Sobolev inequality, $\|\varphi\|_{ \infty}\lesssim \delta$. Since $\delta \ll1$ and $V-\mu \geq C > 0$ (see Assumption 1 and 3 of Theorem \ref{thm:FD-leading-order}), we see that $V_{\rm cut} - \mu \geq O(1) > 0$. Hence, the function $f(z)=f_{\rm FD}(\beta(z + V_{\rm cut} -\mu))$ is analytic on $\mathbb{H}_{V_{\rm cut} - \mu}$ (see definition \eqref{eqn:H-c-def}) and Assumption 1 of Theorem \ref{thm:semi-classical-leading-orders} is satisfied. Clearly, items 2 and 4 of Theorem \ref{thm:semi-classical-leading-orders} are satisfied by $v$ and $\varphi$. So we consider item 3.

Note that
\begin{equation}
    \delta = v_{\max} - v_{\min} = V_{\max} - V_{\min}
\end{equation}
remains unchanged. In particular,
\begin{equation}
    \delta \ll \delta^{5/8} = v_{\rm max}^{5/2} \text{ and } v_{\rm max} \ll V_{\rm cut} - \mu.
\end{equation}
This satisfies item 3 in Theorem \ref{thm:semi-classical-leading-orders}. Item 5 of Theorem \ref{thm:semi-classical-leading-orders} can also be satisfied since $v > 0$.

It follows by Theorem \ref{thm:semi-classical-leading-orders} that the $L^2$ norm of the remainder $\text{Rem}$ (see \eqref{eqn:semi-classical-leading-orders}) is bounded above by
\begin{equation}
  \epsilon^{-3+1/2}\int_{-\delta^{1/4}}^\infty dx \, f_{\rm FD}(\beta(x+V_{\rm cut} - \mu)) \lesssim \epsilon^{-3+1/2}\beta^{-1} e^{-\beta(V_{\rm cut}-\mu-\delta^{1/4})}.
\end{equation} 
This proves the errors in \eqref{eqn:FD-semi-classical-leading-orders} and \eqref{eqn:FD-semi-classical-leading-orders-2}.
\end{proof}

The remainder of this section is devoted to the proof of Theorem \ref{thm:semi-classical-leading-orders}. 
\begin{proof}[Proof of Theorem \ref{thm:semi-classical-leading-orders}]
 First, we remark that the potential functions $v$ and $\phi$ are real and bounded. It follows that their associated Hamiltonian $-\Delta+v-\phi$ is self-adjoint (on $L^2(\R^3)$), so that the spectral theory of self-adjoint operator and its associated analytic tools apply. Moreover, the Landscape function $u$ solving \eqref{eqn:Landscape-small-v}, and the Landscape potential $W=1/u$ are also real. 

Let $f$ be a meromorphic function as given in the hypotheses of Theorem \ref{thm:semi-classical-leading-orders}. We note that
\begin{equation}
  u^{-1}(-\epsilon^2 \Delta+v)u = -\epsilon^2\Delta - 2\epsilon^2 \nabla u \cdot \nabla + u^{-1}.
\end{equation}
Let us denote
\begin{equation}
  U = -2u^{-1}\epsilon^2 \nabla u \cdot \nabla + u^{-1} - \varphi. \label{eqn:U-def}
\end{equation}
Consequently, 
\begin{equation}
  \den f(-\epsilon^2 \Delta+v -\varphi) =  \den f(-\epsilon^2\Delta+U).
\end{equation}
Since $-\epsilon^2\Delta+U$ has the same spectrum as $-\epsilon^2\Delta+v -\varphi$, we see that the spectrum of $-\epsilon^2\Delta+U$ is contained in  $[v_{\min}-O(\delta), \infty) \subset [v_{\min}/2, \infty)$, by item (2) of the assumptions in Theorem \ref{thm:semi-classical-leading-orders}. Thus, using Cauchy's theorem, we can write
\begin{equation}
    f(-\eps^2 \Delta + U) = \frac{1}{2\pi i}\int_{\G} f(z) (z-(-\eps^2 \Delta+U))^{-1},
\end{equation}
where the contour $\G$ is given in Figure \ref{fig:cauchy-int-contour-general}. 

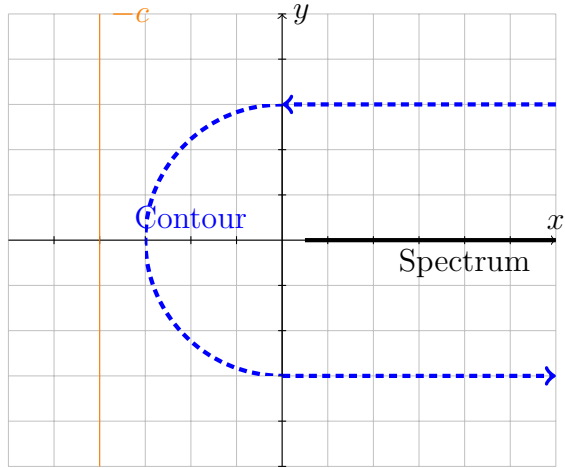
\begin{figure}[ht]
\begin{center}
\begin{tikzpicture}[scale=0.6]
    \path [draw, help lines, opacity=.5]  (-6,-5) grid (6,5);
    
    \foreach \i in {1,...,5} \draw (\i,2.5pt) -- +(0,-5pt) node [anchor=north, font=\small] {} (-\i,2.5pt) -- +(0,-5pt) node [anchor=north, font=\small] {} (2.5pt,\i) -- +(-5pt,0) node [anchor=east, font=\small] {} (2.5pt,-\i) -- +(-5pt,0) node [anchor=east, font=\small] {};
    
    \draw [->] (-6,0) -- (6,0) node [anchor=south] {$x$};
    \draw [->] (0,-5) -- (0,5) node [anchor=west] {$y$};
    
    \draw [-, color=orange] (-4,-5) -- (-4,5) node [anchor=west] {$-c$};
    
	\draw [-, ultra thick, color=black] (0.5, 0) node {} -- (6, 0) node {};
	\node[color=black] at (4,-0.5) {Spectrum};
    
	\draw [->, ultra thick, color=blue, densely dashed] (6,3) node {} -- (0,3) node {};
    \draw [->, ultra thick, color=blue, densely dashed] (0,-3) node {} -- (6,-3) node {};
    
    \begin{scope}
    \clip (-3,-3) rectangle (0,3);
    \draw [->, ultra thick, color=blue, densely dashed] (0,0) circle(3);
    \draw (0,3) -- (0,-3);
    \end{scope}

	\node[color=blue] at (-2, 0.5) {Contour};
  \end{tikzpicture}
\end{center}
\caption{We identify the complex plane $\C$ with $\R^2$ via $z = x+iy$ for $(x,y) \in \R^2$ in the diagram above. The contour $\G$ is denoted by the blue dashed line, extending to positive real infinity. The spectrum of $-\eps^2 \Delta + v-\varphi$ is contained in the solid black line. The orange line is where $\Re z = -c$ and $f(z)$ is analytic for $\Re z > -c$. \label{fig:cauchy-int-contour-general}}
\end{figure}

For simplicity, we will denote
\begin{equation}
  \oint := \frac{1}{2\pi i} \int_{\Gamma} dz \label{eqn:oint-def}
\end{equation}
for the rest of the paper. Let 
\begin{align}
    &W = 1/u \label{eqn:W-in-U-def}, \\
    & \tildeW=1/u-\varphi  \label{eqn:V-in-U-def}.
\end{align}
Then,
\begin{align}
  \den f(-\epsilon^2 \Delta + v -\varphi) =& \den f(-\epsilon^2\Delta+U) \\
  =& \den \oint f(z) R(W,\tildeW), \label{eqn:f-RWV-1}
\end{align}
where 
\begin{equation}
  R(W, \tildeW) =\left(z-(-\eps^2\Delta+U)\right)^{-1}= \Big(z-(-\epsilon^2\Delta+2\eps^2 W^{-1}\nabla W \cdot \nabla + \tildeW)\Big)^{-1}. \label{eqn:R-def}
\end{equation}
To extract leading orders and for $z \in \C$ not in the positive real line, we define,
\begin{align}
    &R:=  (z+\epsilon^2\Delta)^{-1},  \label{eq:R00}\\
    &R_R(\tildeW) := \sum_{n\geq 0} R^{n+1} \tildeW^n, \label{eqn:RR-def} \\
    &R_L(\tildeW) := \sum_{n\geq 0} \tildeW^n R^{n+1} \label{eqn:RL-def}.
\end{align}
It follows from \eqref{eqn:f-RWV-1} that 
\begin{equation}
  \den f(-\epsilon^2 \Delta + v -\varphi) =  \den \oint f(z) R_L(\tildeW) + \oint f(z) \Big(R(W,\tildeW) - R_L(\tildeW)\Big).
\end{equation}
Translation invariance of $-\Delta$ shows that for any $n$,
\[
    \den  R^n=\frac{1}{(2\pi\eps)^{3}}\int_{\R^3}(z-p^2)^{-n}\, dp.
\]
Using Cauchy's formula and Taylor's theorem, the first term $\den \oint f(z) R_L(\tildeW)$ can be computed as 
\begin{align}
    \den \oint f(z) R_L(\tildeW) =&  \sum_{n \geq 0} \oint f(z) \den  \tildeW^n R^{n+1}\\
    =& \frac{1}{(2\pi\eps)^{3}} \sum_{n \geq 0} \int_{\R^3} dp \, \oint f(z) \frac{1}{(z-p^2)^{n+1}} \tildeW^n \\
    =& \frac{1}{(2\pi\eps)^{3}} \sum_{n \geq 0} \int_{\R^3} dp \, \frac{f^{(n)}(p^2)}{n!} \tildeW^n \\
    =& \frac{1}{(2\pi\eps)^{3}} \int_{\R^3} dp \, f(p^2+\tildeW). \label{eqn:semi-classical-leading-computation}
\end{align}
Recalling that $\tildeW=W-\varphi$, this gives the leading order term in equation \eqref{eqn:semi-classical-leading-orders}. It remains to estimate the error term 
\begin{equation}
  \oint f(z) \Big(R(W,\tildeW) - R_L(\tildeW)\Big).
\end{equation}
The following Lemma is the main work horse in this estimate, whose proof is delayed until the conclusion of the proof of Theorem \ref{thm:semi-classical-leading-orders}. The Lemma involves the Schatten $p$-norm $\mathfrak{S}^p(\Om)$ given in Appendix \ref{sec:per-vol-set-up}.

\begin{lemma} \label{lem:submain-lemma}
Let $2 \leq p$. Assume that the assumptions in Theorem \ref{thm:semi-classical-leading-orders} hold and let $W$ and $\tildeW$ be given by \eqref{eqn:W-in-U-def} and \eqref{eqn:V-in-U-def}, respectively. Then
\begin{align}
   \| (1-\eps^2\Delta)(R(W,\tildeW)- R_{R}(\tildeW)) \|_{\mathfrak{S}^p(\Om)} \leq \frac{C_1\eps^{-3/p}}{C_1-\frac{v_{\max}}{d(z)}}  \delta v_{\max}^{-1/2-1/p} \eps^{1/p}, \label{eqn:R-approx-1} \\
   \| (R(W,\tildeW)- R_{L}(\tildeW))(1-\eps^2\Delta) \|_{\mathfrak{S}^p(\Om)} \leq \frac{C_1 \eps^{-3/p}} {C_2-\frac{v_{\max}}{d(z)}}   \delta v_{\max}^{-1/2-1/p}  \eps^{1/p}  \label{eqn:R-approx-2}
\end{align}
for some $C_1, C_2$ that depends on $p$ and where $d(z)$ is the distance from $z$ to the positive real line.
\end{lemma}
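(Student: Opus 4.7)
The plan is to derive an explicit operator identity expressing $R(W,\tildeW) - R_R(\tildeW)$ as a commutator series plus a drift contribution, and then estimate each piece in $\mathfrak{S}^p(\Om)$ via Kato--Seiler--Simon inequalities combined with the $L^p$ bounds on $\nabla W$ and $\Delta W$ from Theorem \ref{thm:useful-L-p-est-on-DW}.

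First, set $A = -\eps^2\Delta + \tildeW + D$ with $D := 2\eps^2 W^{-1}\nabla W\cdot\nabla$, so that $R(W,\tildeW) = (z-A)^{-1}$. Right-multiplying $R_R(\tildeW) = \sum_{n \geq 0} R^{n+1}\tildeW^n$ by $(z-A) = R^{-1} - \tildeW - D$ and using $R^{n+1}\tildeW^n R^{-1} = R^n\tildeW^n + R^{n+1}[\tildeW^n, R^{-1}]$ yields, after telescoping,
\begin{equation}
R(W,\tildeW) - R_R(\tildeW) = \sum_{n \geq 1} R^{n+1}[R^{-1},\tildeW^n]\,R(W,\tildeW) + R_R(\tildeW)\,D\,R(W,\tildeW).
\end{equation}
The commutator simplifies as $[R^{-1},\tildeW^n] = \eps^2[\Delta,\tildeW^n]$, a first-order differential operator whose coefficients are built from $\nabla\tildeW^n$ and $\Delta\tildeW^n$; combining $\tildeW = W - \varphi$ with $\|\varphi\|_{H^2} \lesssim \delta$, every coefficient is therefore controlled by $\nabla W$ and $\Delta W$.

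Next, I would apply $(1-\eps^2\Delta)$ from the left and use $(1-\eps^2\Delta)R^{n+1} = (1+z)R^{n+1} - R^n$, so each term becomes a product of (function of momentum)$\cdot$(multiplication by $\nabla W$ or $\Delta W$)$\cdot$(bounded operator). The estimates rely on the periodic Kato--Seiler--Simon inequality
\begin{equation}
\|f(x)\,h(-i\eps\nabla)\|_{\mathfrak{S}^p(\Om)} \lesssim \eps^{-3/p}\|f\|_{L^p(\Om)}\|h\|_{L^p(\R^3)},
\end{equation}
together with the operator bounds $\|R\|_{\ope}, \|R(W,\tildeW)\|_{\ope} \lesssim d(z)^{-1}$, $\|\eps\nabla R(W,\tildeW)\|_{\ope} \lesssim d(z)^{-1/2}$, and $\|\tildeW R\|_{\ope} \lesssim v_{\max}/d(z)$ (the second of these using the similarity $R(W,\tildeW) = u(z-H)^{-1}u^{-1}$). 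The key $L^p$-input, obtained from Theorem \ref{thm:useful-L-p-est-on-DW} together with $W^{-1} \lesssim v_{\max}^{-1}$, is
\begin{equation}
\|W^{-1}\eps\nabla W\|_{L^p(\Om)} \lesssim \delta\, v_{\max}^{-1/2 - 1/p}\,\eps^{1/p},
\end{equation}
so that the leading drift contribution already yields $\eps^{-3/p}\cdot \delta v_{\max}^{-1/2 - 1/p}\eps^{1/p}$, matching the right-hand side of \eqref{eqn:R-approx-1}. The $n$-th iterate of the commutator series carries an additional factor $(v_{\max}/d(z))^n$ coming from the $n$ copies of $\tildeW R$ implicit in $\tildeW^n R^n$, and summing the resulting geometric series produces the denominator $1/(C_1 - v_{\max}/d(z))$.

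The main obstacle is careful $\eps$- and $v_{\max}$-accounting. In particular, the $\Delta\tildeW$ piece of the commutator is a priori worse by a factor of $\eps^{-1}$ in $L^p$ than the $\nabla\tildeW$ piece (by Theorem \ref{thm:useful-L-p-est-on-DW}), and this must be exactly compensated by the extra $\eps^2$ sitting in $\eps^2[\Delta,\tildeW^n]$, leaving a net gain of $\eps$. One must also check that the commutator contribution (which carries $\|\nabla\tildeW\|_p$, i.e.\ $v_{\max}^{1/2 - 1/p}$, instead of $v_{\max}^{-1/2 - 1/p}$) is dominated by the drift contribution under the assumption $v_{\max} \ll 1$. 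The companion bound \eqref{eqn:R-approx-2} is obtained by the symmetric argument: expand $(z-A)R_L(\tildeW)$ rather than $R_R(\tildeW)(z-A)$, and place the $(1-\eps^2\Delta)$ factor on the right instead of the left.
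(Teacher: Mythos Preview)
Your proposal is correct and arrives at the same estimate through a somewhat different organization than the paper. The paper fully expands $R(W,\tildeW)=\sum_{n\ge 0}(RU)^nR$ with $U=\eps^2 W_{11}\cdot\nabla+\tildeW$, then for each order $\gamma_n=(RU)^nR$ separates out the pure-$\tildeW$ piece $(R\tildeW)^nR$, the single-drift pieces, and the multi-drift remainder $\gamma_n'$; the pure-$\tildeW$ piece is then commuted to $\tildeW^nR^{n+1}$ with the commutator $[R,\tildeW]$ controlled by a separate sublemma. You instead keep the full resolvent intact and derive the closed identity
\[
R(W,\tildeW)-R_R(\tildeW)=\sum_{n\ge 1}R^{n+1}[R^{-1},\tildeW^n]\,R(W,\tildeW)+R_R(\tildeW)\,D\,R(W,\tildeW),
\]
which cleanly isolates a single drift contribution and a commutator series from the outset. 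Both routes feed into the same machinery (Kato--Seiler--Simon plus the $L^p$ bounds on $\nabla W,\Delta W$ from Theorem~\ref{thm:useful-L-p-est-on-DW}) and the same geometric ratio $v_{\max}/d(z)$. Your organization is a bit more compact---in particular the multi-drift terms that the paper tracks as $\gamma_n'$ are automatically absorbed into the operator-norm bound on the single factor $R(W,\tildeW)$---while the paper's order-by-order decomposition makes the $n$-dependence more explicit. One small point worth spelling out when you write this up: the operator-norm bounds you quote for $R(W,\tildeW)$ and $\eps\nabla R(W,\tildeW)$ via the similarity $u(z-H)^{-1}u^{-1}$ require commuting $(1-\eps^2\Delta)$ or $\eps\nabla$ through $u$, which brings in $\|\eps\nabla u\|_\infty$ and $\|\eps^2\Delta u\|_\infty$; these are indeed $O(1)$ by Proposition~\ref{prop:sup-norm-grad-u}, but the dependence on $v_{\min}$ should be tracked so that the final constants come out as stated.
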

Assuming Lemma \ref{lem:submain-lemma}, we complete the proof of Theorem \ref{thm:semi-classical-leading-orders}. Let $ {2 \leq p < 3  }$ and $q$ be the H\"{o}lder conjugate of $p$ such that $\frac{1}{p} + \frac{1}{q} = 1$. Recalling the definition of $R_L(\tildeW)$ in \eqref{eqn:RL-def}, we may apply Lemma \ref{lem:submain-lemma} and Lemma \ref{lem:den-to-schatten} (from the Appendix) to obtain
\begin{align}
    & \hspace{-3cm}\left\|\den \oint f(z) \big( R(W,\tildeW) -  R_L(\tildeW) \big) \right\|_{L^p(\Om)} \notag \\
    =&\, \left\|\oint f(z) \den [R(W,\tildeW) - R_L(\tildeW)](1-\eps^2\Delta)(1-\eps^2\Delta)^{-1} \right\|_{\mathfrak{S}^p(\Om)} \\
    \lesssim&\,  \frac{\eps^{-3/q}}{ \inf_z d(z)} \left\|\oint f(z) [R(W,\tildeW) - R_L(\tildeW)](1-\eps^2\Delta) \right\|_{\mathfrak{S}^p(\Om)} \\
    \lesssim&\,  \frac{\eps^{-3}}{\inf_z d(z)} \int_{\Gamma} |f(z)| \left(1-C\frac{v_{\max}}{\inf_z d(z)} \right)^{-1}   \eps^{1/p}  \delta v_{max}^{-1/2-1/p} \label{eqn:ulti-1},
\end{align}
provided $v_{\max}$ is much smaller than $\inf_z d(z)$. 
Since $v_{\max} \ll c$ by item 3 in the Assumption of Theorem \ref{thm:semi-classical-leading-orders} (recall that $f(z)$ is analytic for $\Re z > -c$), we choose our contour to be such that
\begin{equation}
    c \gg d(z) = 2Cv_{\max},
\end{equation}
where $C$ is from \eqref{eqn:ulti-1}.
The assumption $\delta \ll v_{\max}^{5/2}$ proves \eqref{eqn:sc-error-est}.
\end{proof}

\begin{proof}[Proof of Lemma \ref{lem:submain-lemma}]
We will prove \eqref{eqn:R-approx-2} only. The proof for \eqref{eqn:R-approx-1} is similar. For $W$ and $\tildeW$ given in \eqref{eqn:W-in-U-def} and \eqref{eqn:V-in-U-def}, respectively,  recall from \eqref{eqn:U-def}  that
\begin{equation}
      U = 2\epsilon^2W^{-1} \nabla W \cdot \nabla  + \tildeW.
  \end{equation}
We expand the resolvent using the resolvent identity
\begin{align}
  R(W,\tildeW) =& (z-(-\epsilon^2\Delta+U))^{-1} \notag\\
  =& (z-(-\epsilon^2\Delta))^{-1} + (z-(-\epsilon^2\Delta))^{-1}U(z-(-\epsilon^2\Delta+U))^{-1} \notag\\
  =& \sum_{n \geq 0} (RU)^n R. \label{eqn:RWV-series-expand}
\end{align}
We will consider each of the $n$-th order terms separately. Let us denote
\begin{equation}
  \gamma_n = (RU)^n R \label{eqn:gamma-n-def}.
\end{equation}
Since commutator of $-\epsilon^2\Delta$ with $\tildeW$ is higher order, to leading order, we have
\begin{equation}
  \gamma_n =  \tildeW^nR^{n+1} + \text{higher order (h.o.)},
\end{equation}
where h.o. will be computed after this paragraph. Summing over $n$, to leading order,
\begin{align}
  R(W,\tildeW) =& \sum_{n \geq 0} \tildeW^nR^{n+1} + \text{h.o.} \\
    =& R_L(\tildeW) + \text{h.o.}.
\end{align}

Now we compute the higher order terms coming from $\gamma_n -  \tildeW^nR^{ n+1}$ where $\gamma_n$ is given in \eqref{eqn:gamma-n-def}. Let us introduce the following notations for clarity of exposition.
\begin{enumerate}
\item We denote 
  \begin{equation}
    W_{11} = 2W^{-1}\nabla W \label{eqn:W-11-def}
  \end{equation}
  Note that $W_{11}$ is associated with the first order derivative part of 
  \begin{equation}
      U = 2\epsilon^2W^{-1} \nabla W \cdot \nabla  + \tildeW.
  \end{equation}
  
\item We denote
  \begin{align}
    &W_{12} = -2\nabla \tildeW, \label{eqn:W12-def} \\
    &W_{21} = -\Delta \tildeW. \label{eqn:W21-def}
  \end{align}
  These terms came from the commutator
  \begin{align}
    [R, \tildeW] = R(-2\epsilon^2\nabla \tildeW \cdot \nabla + (-\epsilon^2\Delta \tildeW))R =  R(\epsilon^2  W_{11}\cdot \nabla + \eps^2 W_{21}) R \label{eqn:R-W-commutator}
  \end{align}
  when we commute $\tildeW$ in $\gamma_n$ to the left to obtain  $\tildeW^n R^{n+1}$.
\end{enumerate}
A simple way to keep track of the $W_{ij}$'s is to note that $W_{ij}$ has $i$ derivatives taken while $j$ stands for the $j$-th such quantity (in order of their introduction).

We write $U = \epsilon^2 W_{11}\cdot\nabla +\tildeW$. Then
\begin{equation}
    \gamma_n = R(\epsilon^2 W_{11}\cdot\nabla +\tildeW) R \cdots R (\epsilon^2 W_{11}\cdot\nabla +\tildeW) R.
\end{equation}
If we write $\gamma_n$ by expanding all the brackets above, we obtain
\begin{equation}
  \gamma_n =  (R\tildeW)^n R + \epsilon \sum_{i=0}^{n-1} (R\tildeW)^i (RW_{11}\cdot \epsilon \nabla) (R\tildeW)^{n-i-1} R + \gamma_n',
\end{equation}
where $\gamma_n'$ is defined by this expression and contains terms with at least two factors of $\epsilon^2 W_{11}\cdot\nabla$. By commuting $\tildeW$ to the left, we see that
\begin{equation}
    (R\tildeW)^n R = \tildeW^nR^{n+1} + \sum_{0 \leq i < j \leq n} \tildeW^{j-1} R^i [R,\tildeW] R^{j-i-1} (R\tildeW)^{n-j} R.
\end{equation}
It follows that
\begin{align}
    \gamma_n =& \tildeW^n R^{n+1} \label{eqn:gamma-expand-0}\\
        &+ \sum_{0 \leq i < j \leq n} \tildeW^{j-1} R^i [R,\tildeW] R^{j-i-1} (R\tildeW)^{n-j} R \label{eqn:commutator-term} \\
        &+ \epsilon \sum_{i=0}^{n-1} (R\tildeW)^i (RW_{11}\cdot \epsilon \nabla) (R\tildeW)^{n-i-1} R \label{eqn:W11-term} \\
        &+ \gamma_n', \label{eqn:ho-terms}
\end{align}
where we note that the leading term $\tildeW^n R^{n+1}$ was used in the computation for \eqref{eqn:semi-classical-leading-computation}. We now proceed to estimate the terms \eqref{eqn:commutator-term} -- \eqref{eqn:ho-terms} individually.

First, we estimate  the Schatten $p$-norm of commutators (see Appendix \ref{sec:per-vol-set-up}) of \eqref{eqn:commutator-term} .

\begin{lemma} \label{lem:commutator-estimate}
Let $2 \leq p$. Let $\tildeW$ be given in \eqref{eqn:V-in-U-def}. Then
\begin{equation}
    \|[R, \tildeW]\|_{\mathfrak{S}^p(\Om)} \leq C\frac{\eps^{-3/p}}{d(z)^2}    \delta v_{\max}^{1/2-1/p} \eps^{1/p} \label{eqn:lem:commutator-estimate}.
\end{equation}
where $d(z)$ is the distance from $z$ to the positive real line.
\end{lemma}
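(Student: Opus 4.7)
The plan is to reduce the commutator to resolvent sandwiches involving only $\nabla\tildeW$, then estimate each sandwich via Schatten--H\"older together with the Kato--Seiler--Simon (KSS) inequality in the periodic form of Appendix \ref{sec:per-vol-set-up}.

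First, using $[R,\tildeW]=-R[z+\eps^2\Delta,\tildeW]R=-\eps^2 R[\Delta,\tildeW]R$ and the Leibniz identity $[\Delta,\tildeW]=(\Delta\tildeW)+2(\nabla\tildeW)\cdot\nabla$, one obtains
\begin{equation*}
    [R,\tildeW]=-\eps^2 R(\Delta\tildeW)R-2\eps^2 R(\nabla\tildeW)\cdot\nabla R.
\end{equation*}
Since the target $v_{\max}$-power $\tfrac12-\tfrac1p$ in the lemma matches $\|\nabla W\|_{L^p}$ rather than $\|\Delta W\|_{L^p}$ in Theorem \ref{thm:useful-L-p-est-on-DW}, I would then eliminate $\Delta\tildeW$ by the operator identity $\Delta\tildeW=\nabla\cdot[\nabla\tildeW]-[\nabla\tildeW]\cdot\nabla$ (where brackets denote multiplication), which after cancellation gives
\begin{equation*}
    [R,\tildeW]=-\eps^2(R\nabla)\cdot(\nabla\tildeW)\,R-\eps^2 R\,(\nabla\tildeW)\cdot(\nabla R),
\end{equation*}
so that only one derivative of $\tildeW$ appears in each term.

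Next I would bound each term by Schatten--H\"older $\|ABC\|_{\mathfrak{S}^p}\le\|A\|_{\ope}\|BC\|_{\mathfrak{S}^p}$. For the first term, peeling off $R\nabla$ in operator norm gives $\|R\nabla\|_{\ope}\lesssim\eps^{-1}\sqrt{\Re z}/d(z)$ (from maximizing $|k|/|z-\eps^2|k|^2|$), and periodic KSS applied to $(\nabla\tildeW)(x)\,g_0(-i\nabla)$ with $g_0(k)=(z-\eps^2|k|^2)^{-1}$ yields
\begin{equation*}
    \|(\nabla\tildeW)R\|_{\mathfrak{S}^p}\lesssim\eps^{-3/p}\|\nabla\tildeW\|_{L^p(\Om)}\,\|(z-|\cdot|^2)^{-1}\|_{L^p(\R^3)},
\end{equation*}
where the $L^p(\R^3)$-norm of the symbol is evaluated by splitting the radial integral at $|k|^2\sim\Re z$ (using $|z-|k|^2|\ge d(z)$ on the inner piece and $|z-|k|^2|\gtrsim|k|^2$ on the tail) to produce a factor $d(z)^{-1}$ times a bounded power of $\Re z$. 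For the second term, I would use the adjoint invariance of the Schatten norm together with the anti-self-adjointness of $\nabla$ to reduce $R(\nabla\tildeW)\cdot(\nabla R)$ to the same form as the first term (with $z$ replaced by $\bar z$, harmless since $d(\bar z)=d(z)$). Finally, inserting $\|\nabla\tildeW\|_{L^p(\Om)}\lesssim \delta\,v_{\max}^{1/2-1/p}\eps^{-1+1/p}$ from Theorem \ref{thm:useful-L-p-est-on-DW} applied to $W=1/u$ (with the $\nabla\varphi$ contribution controlled by the hypothesis $\|\varphi\|_{H^2}\lesssim\delta$ and Sobolev embedding) and multiplying the powers of $\eps$, $d(z)$, and $v_{\max}$ reproduces the claimed bound.

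The main obstacle I anticipate is precisely the treatment of $\nabla R$: the symbol $|k|(z-|k|^2)^{-1}$ fails to lie in $L^p(\R^3)$ for $p\le 3$, so a direct KSS estimate on $(\nabla\tildeW)(\nabla R)$ is unavailable in the relevant range. The adjoint-symmetry trick, which lets one always place the ``bad'' derivative as $R\nabla$ in operator norm rather than as $\nabla R$ inside a Schatten factor, is the key technical move that keeps the argument valid uniformly for all $p\ge 2$. A secondary bookkeeping point is to verify that the $\Re z$-dependent factors from $\|R\nabla\|_{\ope}$ and from the symbol's $L^p(\R^3)$-norm combine to produce exactly $d(z)^{-2}$ and $v_{\max}^{1/2-1/p}$ without leftover $|z|$-growth; this is a straightforward consequence of the radial split above.
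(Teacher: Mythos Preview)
Your argument is correct but takes a genuinely different route from the paper. The paper does \emph{not} eliminate $\Delta\tildeW$: it writes
\[
[R,\tildeW]=R\big(\eps^2 W_{12}\cdot\nabla+\eps^2 W_{21}\big)R,\qquad W_{12}=-2\nabla\tildeW,\ W_{21}=-\Delta\tildeW,
\]
and then simply applies Schatten--H\"older as $\|RW_{12}\|_{\mathfrak{S}^p}\|\eps\nabla R\|_{\mathfrak{S}^\infty}+\|R\|_{\mathfrak{S}^\infty}\eps^2\|W_{21}R\|_{\mathfrak{S}^p}$, followed by KSS and both estimates \eqref{eqn:Lp-of-nabla-u}, \eqref{eqn:Lp-of-Delta-u} of Theorem~\ref{thm:useful-L-p-est-on-DW}. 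In particular the paper never needs your adjoint trick: the factor carrying the derivative, $\eps\nabla R$, is always placed in $\mathfrak{S}^\infty$ (operator norm), so KSS is only ever invoked on the derivative-free symbol $(z-|\cdot|^2)^{-1}$.

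What your rewriting buys is precisely what you noticed: the $\Delta\tildeW$ term, bounded via \eqref{eqn:Lp-of-Delta-u}, contributes $\eps^2\|\Delta W\|_p\lesssim \delta\,v_{\max}^{1-4/p}\eps^{1/p}$, which for $2\le p<6$ is a \emph{larger} power of $v_{\max}^{-1}$ than the asserted $v_{\max}^{1/2-1/p}$. Your integration-by-parts identity avoids this and reproduces the stated exponent exactly. (In the paper this discrepancy is harmless, since downstream in Lemma~\ref{lem:submain-lemma} the dominant contribution already carries $v_{\max}^{-1/2-1/p}$; but your observation is sharp.) Your claim that the $\Re z$-dependence washes out to exactly $d(z)^{-2}$ is optimistic---both $\|R\nabla\|_{\ope}$ and the $L^p$ symbol norm carry powers of $\Re z$ that do not fully cancel---but the paper's proof is equally loose on this point, and it is immaterial once integrated against $f_{\mathrm{FD}}$ on the contour.
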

\begin{proof}
We compute
\begin{equation}
    [R, \tildeW] = R(\eps^2 W_{11}\cdot\nabla + \epsilon^2 W_{21})R.
\end{equation}
Kato-Seiler-Simon inequality shows that
\begin{align}
    \|[R, \tildeW]\|_{\mathfrak{S}^p(\Om)} \leq & \epsilon \|RW_{12}\|_{\mathfrak{S}^p(\Om)} \|\eps\nabla R\|_{\mathfrak{S}^\infty(\Om)} + \|R\|_{\mathfrak{S}^\infty(\Om)} \eps^2 \|W_{21}R\|_{\mathfrak{S}^p(\Om)} \\
    \leq & \frac{C\eps^{-3/p}}{d(z)^2} (\eps\|W_{12}\|_{p } + \eps^2\|W_{21}\|_{p }).
\end{align}
Recalling definitions \eqref{eqn:W12-def} and \eqref{eqn:W21-def}, we see that
\begin{align}
    \|W_{12}\|_{p} &\leq C\|\nabla \tildeW\|_{p}, \\
    \|W_{21}\|_{p} &\leq C\|\Delta \tildeW\|_{p}.
\end{align}
By definition \eqref{eqn:V-in-U-def} of $\tildeW$, Theorem \ref{thm:useful-L-p-est-on-DW} proves \eqref{eqn:lem:commutator-estimate}.
\end{proof}

Applying Lemma \ref{lem:commutator-estimate} to equation \eqref{eqn:commutator-term}, we see that
\begin{align}
    \left\|\sum_{0 \leq i < j \leq n} \tildeW^{j-1} R^i [R,\tildeW] R^{j-i-1} (R\tildeW)^{n-j} R (1-\eps^2\Delta)\right\|_{\mathfrak{S}^p(\Om)} \notag \\
    \leq  \frac{\eps^{-3/p}n^2 C^n}{d(z)^{n+1}} \|\tildeW\|_{\infty}^{n-1}   \delta v_{\max}^{1/2-1/p} \eps^{1/p}. \label{eqn:commutator-est-end}
\end{align}
This concludes the estimate for \eqref{eqn:commutator-term}.  

We can estimate \eqref{eqn:W11-term} in the spirit of the estimate \eqref{eqn:commutator-term} by using an analogue of \eqref{eqn:commutator-est-end}. We obtain
\begin{align}
    \left\|\epsilon \sum_{i=0}^{n-1} (R\tildeW)^i (RW_{11}\cdot  \epsilon \right. & \left. \nabla) (R\tildeW)^{n-i-1} R  (1-\eps^2\Delta) \vphantom{\sum_{i=0}^{n-1}}\right\|_{\mathfrak{S}^p(\Om)} \notag \\
    &\leq  \frac{\eps^{-3/p}nC^n}{d(z)^{n+1}} \|\tildeW\|_{ \infty}^{n-1}  \eps \|W_{11}\|_{ p} \\
    &\leq  \frac{\eps^{-3/p}nC^n}{d(z)^{n+1}} \|\tildeW\|_{ \infty}^{n-1} \delta v_{\max}^{1/2-1/p} \eps^{1/p}. \label{eqn:W11-term-est-end}
\end{align}

Finally, we estimate \eqref{eqn:ho-terms}. The term $\gamma'_n$ consists of all possible terms of the form
\begin{equation}
    RX_1 R X_2 \cdots R X_n R,
\end{equation}
where $X_i$ is one of $\tildeW$ or $\eps W_{11}\cdot (\eps \nabla)$ with at least two of the latter factor. Without loss of generality, we assume that $X_1 = \eps W_{11}\cdot (\eps \nabla)$. By Lemma \ref{lem:den-to-schatten},  Theorem \ref{thm:useful-L-p-est-on-DW} and Proposition  \ref{prop:sup-norm-grad-u}, we see that
\begin{align}
     \| RX_1 R X_2&  \cdots R X_n R  (1-\eps^2\Delta) \|_{\mathfrak{S}^p(\Om)} \notag\\
    \leq& \frac{\eps^{-3/p}C}{d(z)} \|R\eps W_{11}\|_{\mathfrak{S}^p(\Om)} \|\eps \nabla R X_2\|_{\mathfrak{S}^\infty(\Om)} \prod_{i\geq3} \|R X_i\|_{\mathfrak{S}^\infty(\Om)} \\
    \leq & \frac{\eps^{-3/p}C^n}{d(z)^{n+1}}   (\|\tildeW\|_\infty + \eps\|u\|_{ \infty}\|\nabla W\|_\infty)^{n-1}   \|\eps\nabla W\|_{ p} \|u\|_{ \infty} \\
    \leq& \frac{\eps^{-3/p}C^n}{d(z)^{n+1}}   (\|\tildeW\|_\infty + \eps v_{\min}^{-1}\|\nabla W\|_\infty)^{n-1}   \|\eps\nabla W\|_{ p} v_{\min}^{-1}. \label{eqn:gamma-prime-est}
\end{align}
Combining \eqref{eqn:commutator-est-end} (for \eqref{eqn:commutator-term}), \eqref{eqn:W11-term-est-end}  (for \eqref{eqn:W11-term}), \eqref{eqn:gamma-prime-est} (for \eqref{eqn:ho-terms}), and using the binomial theorem (or a modification thereof), the fact $\delta \ll v_{\min}$ and Theorem \ref{thm:useful-L-p-est-on-DW}, we see that
\begin{align}
    \left\|(\gamma_n - \tildeW^n R^{n+1})\right. & \left. (1-\eps^2\Delta)  \vphantom{(\gamma_n - \tildeW^n R^{n+1})}\right\|_{\mathfrak{S}^p(\Om)} \notag \\
    \leq & \frac{\eps^{-3/p} C^n}{d(z)^{n+1}} (v_{\max} +   \delta v_{\min}^{-1/2} )^{n-1}  \delta v_{\max}^{-1/2-2/p} \eps^{1/p}.
\end{align}
Together with \eqref{eqn:RWV-series-expand}, \eqref{eqn:R-approx-2} and Lemma \ref{lem:submain-lemma} is now proved.
\end{proof}

\section{Consequence of the integrability condition} \label{sec:integrability}

Before we move on to the proof of the main result Theorem \ref{thm:eff-eqn}. We dedicate this short section to elucidate the implied relationships between different parameters $\eps, \delta, \beta$, and $\mu$. We achieve this through the integrability of the Poisson equation \eqref{eqn:integrability-cond}, which we now recall and elaborate.

Let $F = F_{\rm REHF}$, $F_{\rm PL}$ or $ F_{\rm LSC}$ (see \eqref{eqn:F-REHF-def} -- \eqref{eqn:F-LSC-def}). Integrating the left and right hand sides of \eqref{eqn:general-F-def}, we obtain an equation of the form
\begin{align}
     \kappa_0 := \frac{1}{|\Om|}\int_{\Om} \kappa = \frac{1}{|\Om|}\int_\Om F  \label{eqn:mu-0-def-all-F}.
\end{align}
The goal of this section is to prove bounds on $\mu$ given that \eqref{eqn:mu-0-def-all-F} holds.

\begin{lemma} \label{lem:mu-estimate}
Let $V$ be a bounded potential and $\kappa_0 \in \R > 0$ and  $\|\phi\|_{H^2(\Om)} \lesssim \delta$.  Let Assumptions \ref{ass:semi-classical} -- \ref{ass:doping}, and \eqref{eqn:mu-0-def-all-F} hold. Assume also that \eqref{eqn:mu-0-def-all-F} holds for any $F$ given in \eqref{eqn:F-REHF-def} -- \eqref{eqn:F-LSC-def}. 
Then,
\begin{equation}
    0 < \mu \text{ and } V-\mu  > K .
\end{equation}
\end{lemma}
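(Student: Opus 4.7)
The plan is to extract two-sided bounds on $\mu$ from the integrability condition \eqref{eqn:mu-0-def-all-F} by sandwiching the integrand of $F$ between explicit Gaussian-tail expressions and then inverting. First, since Theorem \ref{thm:FD-leading-order} controls $F_{\rm REHF}$ by $F_{\rm PL}$ and $F_{\rm LSC}$ up to an $L^2$ remainder that is exponentially small in $\beta$, it suffices (up to a harmless additive error) to analyze the integral
\[
J(\mu):=\frac{1}{(2\pi\eps)^{3}|\Om|}\int_\Om dx\int_{\R^3} dp\,f_{\rm FD}\!\bigl(\beta(p^2+W(x)+V_{\rm cut}-\phi_{\ast}(x)-\mu)\bigr),
\]
where $(W,\phi_{\ast})$ is either $(W_1,0)$ or $(W_2,\phi)$. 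The landscape estimates of Section~\ref{sec:Landscape-anal} together with $\|\phi\|_{H^2(\Om)}\lesssim\delta$ and Sobolev embedding imply $W+V_{\rm cut}-\phi_{\ast}=V+O(\delta)$ pointwise, so the Fermi--Dirac argument is $V(x)-\mu$ up to a perturbation much smaller than $V_{\min}$.

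Next, I will use the elementary bounds $\tfrac{1}{2}e^{-\beta a}\le f_{\rm FD}(\beta a)\le e^{-\beta a}$ for $\beta a\gtrsim 1$, together with $\int_{\R^3}e^{-\beta p^2}dp=(\pi/\beta)^{3/2}$, to obtain
\[
c_1\,\eps^{-3}\beta^{-3/2}e^{-\beta(V_{\max}-\mu+O(\delta))}\;\le\;J(\mu)\;\le\;c_2\,\eps^{-3}\beta^{-3/2}e^{-\beta(V_{\min}-\mu-O(\delta))}.
\]
Setting $J(\mu)=\kappa_0+o(1)$ (via the integrability condition and Theorem~\ref{thm:FD-leading-order}) and taking logarithms produces
\[
\frac{\log(\eps^{-3})}{\beta}-C\Bigl(\delta+\frac{\log\beta}{\beta}\Bigr)\;\le\;V_{\min}-\mu\;\le\;\frac{\log(\eps^{-3})}{\beta}+C\Bigl(\delta+\frac{\log\beta}{\beta}\Bigr),
\]
where the $\log\beta/\beta$ and $1/\beta$ terms absorb $\log(c_i/\kappa_0)$ and the $V_{\max}-V_{\min}=\delta$ gap.

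Finally, I read off the conclusion from Assumption~\ref{ass:temperature}, which gives $K<\log(\eps^{-3})/\beta<V_{\min}$. The upper bound yields $V_{\min}-\mu<V_{\min}$ once $\delta$ and $\log\beta/\beta$ are small relative to the gap $V_{\min}-\log(\eps^{-3})/\beta$, hence $\mu>0$. The lower bound yields $V(x)-\mu\ge V_{\min}-\mu>K$ once the same error terms are small relative to $\log(\eps^{-3})/\beta-K$; both smallness statements hold in the limiting regime by Assumptions~\ref{ass:semi-classical}--\ref{ass:pname} and the convention \eqref{eqn:ll-def}. The main technical hurdle is the sharp pointwise (or $L^\infty$) comparison $W_i+V_{\rm cut}-\phi_\ast=V+O(\delta)$, which requires combining Proposition~\ref{pro:Hs-estimate-u} and Proposition~\ref{prop:sup-norm-grad-u} with the expansion of $u_i$ into an averaged piece plus a small correction; bookkeeping of the logarithmic corrections against the slack $\log(\eps^{-3})/\beta-K>0$ demands care but no new ideas beyond what is already developed in Section~\ref{sec:Landscape-anal}.
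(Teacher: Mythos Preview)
Your approach is essentially the paper's: reduce to the LSC/PL integral via Theorem~\ref{thm:FD-leading-order}, replace $f_{\rm FD}$ by $e^{-(\cdot)}$ using $W+V_{\rm cut}-\phi_\ast=V+O(\delta)$, and compare the resulting scale $\eps^{-3}\beta^{-3/2}e^{-\beta(V-\mu)}$ against $\kappa_0$ via Assumption~\ref{ass:temperature}. The paper frames this as a monotonicity/intermediate-value argument---it checks $\theta(0)<\kappa_0<\theta(V_{\min}-K)$ for the increasing function $\theta(\mu)=J(\mu)$---whereas you set $J(\mu)=\kappa_0$ and take logarithms directly.

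There is one small circularity in your version that the monotonicity formulation sidesteps. Your lower bound $J(\mu)\ge c_1\eps^{-3}\beta^{-3/2}e^{-\beta(V_{\max}-\mu+O(\delta))}$ relies on $f_{\rm FD}(\beta a)\ge\tfrac12 e^{-\beta a}$, which holds only for $a\ge 0$; that is, you need $p^2+W+V_{\rm cut}-\phi_\ast-\mu\ge 0$ pointwise, which already presupposes $\mu\le V_{\min}+O(\delta)$---part of what you are proving. The paper avoids this by evaluating only at the specific endpoints $\mu=0$ and $\mu=V_{\min}-K$, where the argument is manifestly positive, and then invoking the (strict) monotonicity of $\mu\mapsto\theta(\mu)$ to conclude $0<\mu<V_{\min}-K$. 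Inserting the one-line observation that $J$ is strictly increasing in $\mu$ (since $f_{\rm FD}$ is decreasing) patches your argument with no further change.
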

\begin{proof}
We consider the special case where $F= F_{\rm LSC}$, all other cases follow from Theorem \ref{thm:FD-leading-order}. In this case,  Assumption \ref{ass:doping} implies 
\begin{equation}
     \kappa_0 = \frac{1}{(2\pi\eps)^3 |\Om|} \int_\Om dx \, \int_{\R^3} dp f_{\rm FD}(\beta(p^2+W - \phi + V_{\rm cut} - \mu)) \label{eqn:mu-0-def},
\end{equation}
where $u=1/W$ solves
\begin{equation}
    (-\epsilon^2\Delta+V-V_{\rm cut})u = 1.
\end{equation}
Let $\theta(\mu)$ denote the right hand side of \eqref{eqn:mu-0-def}. We first note that $\theta(\mu)$ is increasing in $\mu$ since $f_{\rm FD}$ is a decreasing function. So it suffices for us to check that $\theta(0) < \kappa_0 < \theta(V_{\min} - K)$.    For a generic $\eta \in \R$,
\begin{align}
 \int_{\R^3} dp \, f_{\rm FD}(&\beta p^2 + \beta (W - \phi + V_{\rm cut}-\eta)) \\
    =& \frac{4\pi}{\beta^{3/2}} \int_0^\infty dq \,q^2 f_{\rm FD}(q^2 + \beta (W - \phi + V_{\rm cut}-\eta)).
\end{align}
Since $\|\phi\|_{H^2} \lesssim \delta$, we can find a constant $C$ such that
\begin{align}
    &0 < V_{\min} \\
    &0 <  K - C\delta \leq (W - \phi + V_{\rm cut}) - (V_{\rm min} - K)
\end{align}
Thus, we see that
\begin{align}
    & f_{\rm FD}(q^2 + \beta (W - \phi + V_{\rm cut}-0)) \approx e^{-q^2}  e^{ -\beta (W-\phi+V_{\rm cut})}, \\
    & f_{\rm FD}(q^2 + \beta ((W - \phi + V_{\rm cut})-(V_{\min}-K))) \approx e^{-q^2} e^{ -\beta (W - \phi + V_{\rm cut}-V_{\min}+K)},
\end{align}
where we recall that $A \approx B$ means $A \lesssim B \lesssim A$. It follows that
\begin{align}
    & \theta(0)  \lesssim \eps^{-3} \beta^{-3/2} e^{-\beta (V_{\min}-C\delta)}, \\
    & \theta(V_{\min}-K)  \gtrsim  \eps^{-3} \beta^{-3/2} e^{-\beta (K+C\delta)}
\end{align}
for some constant $C> 0$. Thus, Assumption \ref{ass:temperature} shows that a solution $0 < \mu_0 < V_{\min}-K$  of $\theta(\mu_0)=\kappa_0$  exists by  Assumptions \ref{ass:semi-classical} and \ref{ass:pname}.

\end{proof}

\begin{corollary} \label{cor:kappa-0-fixed-scale}
Let Assumptions \ref{ass:semi-classical} -- \ref{ass:doping} hold. Assume also that \eqref{eqn:mu-0-def-all-F} holds and $\|\phi\|_{H^2(\Om)} \lesssim \delta$. Then
\begin{equation}
    \eps^{-3}\beta^{-3/2} e^{-\beta(V_{\max} - \mu+C\delta)} \lesssim 1 \lesssim \eps^{-3}\beta^{-3/2} e^{-\beta(V_{\min} - \mu-C\delta)} \label{eqn:kappa-fixed-scale-est}
\end{equation}
for some constant $C > 0$.
\end{corollary}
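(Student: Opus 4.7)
The plan is to recognise that this corollary is essentially a restatement of the key calculation already performed in the proof of Lemma \ref{lem:mu-estimate}, but carried out with $\mu$ as a free parameter rather than specialised to $0$ or $V_{\min}-K$, and then combined with the observation that $\kappa_0 = \theta(\mu) := \frac{1}{|\Om|}\int_\Om F$ is a fixed positive constant (Assumption \ref{ass:doping}) which may be absorbed into the implicit constants of $\lesssim$. Thus it suffices to prove the sandwich
\begin{equation*}
\eps^{-3}\beta^{-3/2}e^{-\beta(V_{\max}-\mu+C\delta)} \lesssim \theta(\mu) \lesssim \eps^{-3}\beta^{-3/2}e^{-\beta(V_{\min}-\mu-C\delta)}.
\end{equation*}

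First I would reduce to the case $F=F_{\rm LSC}$. By Theorem \ref{thm:FD-leading-order} and Cauchy-Schwarz, both $|\int_\Om(F_{\rm REHF}-F_{\rm LSC})|$ and $|\int_\Om(F_{\rm PL}-F_{\rm LSC})|$ are bounded by $\eps^{-5/2}\beta^{-1}e^{-\beta(V_{\rm cut}-\mu-\delta^{1/4})}$; using $V_{\min}-V_{\rm cut}\approx\delta^{1/4}$ from \eqref{eqn:cut-size}, the ratio of this error to the expected main term $\eps^{-3}\beta^{-3/2}e^{-\beta(V_{\min}-\mu-C\delta)}$ is of order $\eps^{1/2}\beta^{1/2}e^{-C\beta\delta}$, which tends to $0$ under Assumptions \ref{ass:semi-classical}--\ref{ass:temperature}. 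After this reduction $\theta(\mu)$ is the explicit integral \eqref{eqn:mu-0-def}; substituting $p=q/\sqrt\beta$ and passing to spherical coordinates writes
\begin{equation*}
\theta(\mu) = \frac{4\pi}{(2\pi\eps)^3\beta^{3/2}|\Om|}\int_\Om dx\int_0^\infty q^2\,dq\, f_{\rm FD}\bigl(q^2+\beta(W-\phi+V_{\rm cut}-\mu)\bigr).
\end{equation*}

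Next I would obtain pointwise control of the argument of $f_{\rm FD}$. Applying the weak maximum principle to the Landscape equation $(-\eps^2\Delta+V-V_{\rm cut})u=1$, which is coercive since $V-V_{\rm cut}\ge V_{\min}-V_{\rm cut}\approx\delta^{1/4}>0$, at the interior extrema of the periodic function $u$ yields $1/(V_{\max}-V_{\rm cut})\le u\le 1/(V_{\min}-V_{\rm cut})$, hence $V_{\min}-V_{\rm cut}\le W\le V_{\max}-V_{\rm cut}$ pointwise. With Sobolev embedding providing $\|\phi\|_\infty\lesssim\|\phi\|_{H^2}\lesssim\delta$, this yields the pointwise bound $V_{\min}-C\delta\le W-\phi+V_{\rm cut}\le V_{\max}+C\delta$. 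By Lemma \ref{lem:mu-estimate}, $V_{\min}-\mu>K>0$, so for $\delta$ sufficiently small the argument of $f_{\rm FD}$ is nonnegative, and the elementary two-sided bound $\tfrac12 e^{-w}\le f_{\rm FD}(w)\le e^{-w}$ for $w\ge 0$ then gives
\begin{equation*}
\tfrac12 e^{-q^2}e^{-\beta(V_{\max}-\mu+C\delta)}\le f_{\rm FD}\bigl(q^2+\beta(W-\phi+V_{\rm cut}-\mu)\bigr)\le e^{-q^2}e^{-\beta(V_{\min}-\mu-C\delta)}.
\end{equation*}
Integrating $q^2 e^{-q^2}\,dq$ on $[0,\infty)$ contributes a universal constant, producing the sandwich for $\theta(\mu)$ and thus \eqref{eqn:kappa-fixed-scale-est}.

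The main technical subtlety is the LSC reduction: one must verify that the exponential remainder from Theorem \ref{thm:FD-leading-order} is strictly dominated by the two targeted main terms. This hinges on the specific calibration $V_{\min}-V_{\rm cut}\approx\delta^{1/4}$ from \eqref{eqn:cut-size} together with the low-temperature scaling enforced by Assumption \ref{ass:temperature}; once that is in place, the rest of the argument is a direct computation parallel to the one in Lemma \ref{lem:mu-estimate}.
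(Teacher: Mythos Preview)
Your approach is essentially the same as the paper's: reduce to the LSC case via Theorem \ref{thm:FD-leading-order}, use Lemma \ref{lem:mu-estimate} to ensure the argument of $f_{\rm FD}$ is positive, replace $f_{\rm FD}$ by the exponential, integrate the Gaussian in $p$, and sandwich $W-\phi+V_{\rm cut}$ between $V_{\min}-C\delta$ and $V_{\max}+C\delta$. You are in fact more explicit than the paper, supplying the maximum-principle argument for the pointwise bounds on $W$ that the paper simply asserts. One small slip: in your LSC-reduction step the ratio of the Theorem \ref{thm:FD-leading-order} remainder to the main term is $\eps^{1/2}\beta^{1/2}e^{+C\beta\delta^{1/4}}$ (the exponent picks up $V_{\min}-V_{\rm cut}\approx\delta^{1/4}$ with a positive sign), not $\eps^{1/2}\beta^{1/2}e^{-C\beta\delta}$; however, under Assumption \ref{ass:temperature} this is still $\lesssim \eps^{1/2-C\delta^{1/4}}\to 0$, so your conclusion stands.
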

\begin{proof}
By Theorem \ref{thm:FD-leading-order}, it suffices for us to assume that \eqref{eqn:mu-0-def} hold. Lemma \ref{lem:mu-estimate} shows that $V-\mu > K > 0$. Since $\beta$ is large, the Fermi-Dirac distribution $f_{\rm FD}(\beta(p^2 + V-\mu))$ is well approximated by $e^{-\beta p^2} e^{-\beta(V-\mu)}$. Integrating $dp$, we have that
\begin{equation}
    \int_{\R^3} dp e^{-\beta p^2} \approx \beta^{-3/2},
\end{equation}
where we recall that $A \approx B$ means $A \lesssim B \lesssim A$. Moreover,
\begin{equation}
    e^{-\beta(V_{\max} - \mu+C\delta)} \leq e^{-\beta(W-\phi+V_{\rm cut}-\mu)} \leq e^{-\beta(V_{\min} - \mu-C\delta)}.
\end{equation}
It follows by \eqref{eqn:mu-0-def} that \eqref{eqn:kappa-fixed-scale-est} is proved.
\end{proof}

\section{Proof of the main result: Theorem \ref{thm:eff-eqn}} \label{sec:main-result-proof}
\begin{proof}[Proof of Theorem \ref{thm:eff-eqn}]
Recall that we can write the REHF, PL, and LSC equations (see \eqref{eqn:general-F-def}) in the form
\begin{equation}
    -\Delta \phi = \kappa - F(\phi, \mu),
\end{equation}
where $F$ is one of \eqref{eqn:F-REHF-def} -- \eqref{eqn:F-LSC-def}.

For a fixed choice of X = REHF, PL, or LSC, let $(\phi_0, \mu)$ denote a solution of equation X satisfying the assumptions of Theorem \ref{thm:eff-eqn}. We look for a solution, $\phi$, of the corresponding equation Y =  REHF, PL, and LSC, Y $\not=$ X, near $(\phi_0, \mu)$ of the form $\phi = \phi_0 + \varphi$. Substituting this ansatz into \eqref{eqn:general-F-def} of the Y equation, we obtain
\begin{align}
    -\Delta \phi_0 - \Delta \varphi =& \kappa - F_Y(\phi_0+\varphi, \mu) \\
        =& \kappa - F_Y(\phi_0, \mu) + F_Y(\phi_0, \mu) - F_Y(\phi_0+\varphi, \mu).
\end{align}
Rearranging, we obtain
\begin{equation}
    - \Delta \varphi =  \kappa' + F_Y(\phi_0, \mu) - F_Y(\phi_0+\varphi, \mu), \label{eqn:eff-eqn-step-1}
\end{equation}
where 
\begin{align}
    \kappa' =& \kappa - F_Y(\phi_0, \mu)+\Delta \phi_0 \\
    =& F_{\rm X}(\phi_0, \mu) - F_{\rm Y}(\phi_0, \mu).
\end{align}
Theorem \ref{thm:FD-leading-order}, Corollary \ref{cor:kappa-0-fixed-scale}, and the scaling in Assumption \ref{ass:pname} shows that
\begin{align}
    \|\kappa'\|_{L^2(\Om)} 
    \lesssim &\,  \eps^{-3+1/2}\beta^{-1} e^{-\beta(V_{\rm cut}-\mu-\delta^{1/4})} \\
    \lesssim & \,  \eps^{1/2}   e^{-C\beta \delta^{1/4}}   \\
    \lesssim &\,  \eps^{1/2- C\delta^{1/4}} \label{eqn:kappa-prime-est}
\end{align}
for some constant $C = O(1)$, independent of $\eps$ and $\delta$.
Let $M$ denote the G\^{a}teaux derivative of $F$ at $\phi_0$: \begin{equation}
    M = d_\phi F(\phi, \mu) \mid_{\phi=\phi_0} . \label{eqn:M-def-primitive}
\end{equation}
We see that \eqref{eqn:eff-eqn-step-1} can be write as
\begin{equation}
    (-\Delta+M)\varphi = \kappa' + N(\varphi) ,\label{eqn:N-def}
\end{equation}
where $N$ is defined by this expression. Let us denote 
\begin{equation}
    L = -\Delta+M . \label{eqn:linear-operator}  
\end{equation}
The rest of the analysis rests upon the following abstract lemma and subsequent theorems.

\begin{lemma}[Main Lemma] \label{lem:main-lemma}
Let ${\mathcal H}_1$ and ${\mathcal H}_2$ be two Hilbert spaces such that ${\mathcal H}_1 \subset {\mathcal H}_2$ is dense (in the $\mathcal{H}_2$ topology). Let $L$ be an operator on ${\mathcal H}_2$ with domain ${\mathcal H}_1$ and $N$ be a function on ${\mathcal H}_1$ with range in ${\mathcal H}_2$. Assume that $L$ is invertible on ${\mathcal H}_2$ and there is a $0 < m \in \R$ such that
\begin{equation}
    \|L^{-1} \|_{{\mathcal H}_2 \rightarrow {\mathcal H}_1} \leq m^{-1}, \label{eqn:L-lower-ass}
\end{equation}
and
\begin{equation}
    \|N(\phi_1) - N(\phi_2)\|_{{\mathcal H}_2} < C_N (\|\phi_1\|_{{\mathcal H}_1} + \|\phi_2\|_{{\mathcal H}_1})\|\phi_1 - \phi_2\|_{{\mathcal H}_1} \label{eqn:nonlin-ass}
\end{equation}
for some constant $C_N$ on a ball of radius at least $C m^{-1} \|\kappa'\|_{{\mathcal H}_2}$ centered the origin for some constant $C > 0$. Let $\kappa' \in {\mathcal H}_2$. If
\begin{align}
    & \|\kappa'\|_{{\mathcal H}_2} \ll m \text{ and } \label{eqn:x-0-ass}\\
    & C_N \|\kappa'\|_{{\mathcal H}_2} \ll m^2, \label{eqn:param-scaling-ass} 
\end{align}
then there exists a unique solution $\varphi$ on the set
\begin{equation}
    \left\{ \varphi \in \mathcal{H}_1 : \|\varphi\|_{\mathcal{H}_1} \leq \frac{1}{100} m C_N^{-1} \right\}
\end{equation} 
to the equation
\begin{equation}
    L\varphi = \kappa' + N(\varphi).
\end{equation}
Moreover,
\begin{equation}
    \|\varphi\|_{{\mathcal H}_1} \lesssim m^{-1} \|\kappa'\|_{{\mathcal H}_2} .
\end{equation}
\end{lemma}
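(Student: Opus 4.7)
The plan is to recast $L\varphi = \kappa' + N(\varphi)$ as a fixed-point problem for the map $T : \mathcal{H}_1 \to \mathcal{H}_1$ defined by $T(\varphi) := L^{-1}\kappa' + L^{-1}N(\varphi)$, and then apply Banach's contraction principle. The two essential inputs are the norm bound $\|L^{-1}\|_{\mathcal{H}_2 \to \mathcal{H}_1} \leq m^{-1}$ from \eqref{eqn:L-lower-ass} and the quadratic estimate $\|N(\varphi)\|_{\mathcal{H}_2} \leq C_N\|\varphi\|_{\mathcal{H}_1}^2$, the latter obtained by setting $\phi_2 = 0$ in \eqref{eqn:nonlin-ass}. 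This uses that $N(0) = 0$, which holds in the application to Theorem \ref{thm:eff-eqn} since $N(\varphi) = M\varphi - [F_Y(\phi_0 + \varphi,\mu) - F_Y(\phi_0,\mu)]$ vanishes at $\varphi = 0$.

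Next I would set $r := 2m^{-1}\|\kappa'\|_{\mathcal{H}_2}$ and verify that $T$ maps the closed ball $B_r \subset \mathcal{H}_1$ into itself and is a strict contraction there. The self-map bound is $\|T(\varphi)\|_{\mathcal{H}_1} \leq m^{-1}\|\kappa'\|_{\mathcal{H}_2} + m^{-1}C_N r^2 = r/2 + (m^{-1}C_N r)\cdot r$, and the scaling assumption \eqref{eqn:param-scaling-ass} gives $m^{-1}C_N r = 2 C_N\|\kappa'\|_{\mathcal{H}_2}/m^2 \ll 1$, so the right-hand side stays below $r$. The contraction estimate reads
\begin{equation*}
   \|T(\varphi_1) - T(\varphi_2)\|_{\mathcal{H}_1} \leq 2 m^{-1} C_N r \, \|\varphi_1 - \varphi_2\|_{\mathcal{H}_1}
\end{equation*}
with the same small prefactor. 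Banach's theorem then produces a unique fixed point $\varphi \in B_r$, and substituting this $\varphi$ back into $\varphi = L^{-1}\kappa' + L^{-1}N(\varphi)$ together with the quadratic estimate gives $\|\varphi\|_{\mathcal{H}_1} \leq m^{-1}\|\kappa'\|_{\mathcal{H}_2} + m^{-1}C_N\|\varphi\|_{\mathcal{H}_1}^2 \lesssim m^{-1}\|\kappa'\|_{\mathcal{H}_2}$, which is the claimed quantitative bound.

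To upgrade uniqueness from $B_r$ to the larger set $\{\|\varphi\|_{\mathcal{H}_1} \leq \tfrac{1}{100}mC_N^{-1}\}$, I would run a short bootstrap. Any solution $\tilde\varphi$ with $\|\tilde\varphi\|_{\mathcal{H}_1} \leq \tfrac{1}{100}mC_N^{-1}$ must satisfy
\begin{equation*}
   \|\tilde\varphi\|_{\mathcal{H}_1} \leq m^{-1}\|\kappa'\|_{\mathcal{H}_2} + \big(m^{-1}C_N \|\tilde\varphi\|_{\mathcal{H}_1}\big) \|\tilde\varphi\|_{\mathcal{H}_1} \leq m^{-1}\|\kappa'\|_{\mathcal{H}_2} + \tfrac{1}{100}\|\tilde\varphi\|_{\mathcal{H}_1},
\end{equation*}
which forces $\|\tilde\varphi\|_{\mathcal{H}_1} \leq \tfrac{100}{99}m^{-1}\|\kappa'\|_{\mathcal{H}_2} < r$. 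Thus every such $\tilde\varphi$ already lies in $B_r$ and therefore coincides with the fixed point constructed above.

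The main obstacle is essentially bookkeeping: one must juggle the three independent small parameters (the radius $r$, the contraction constant $m^{-1}C_N r$, and the smallness of $\|\kappa'\|_{\mathcal{H}_2}$ relative to $m$) and pick numerical prefactors so that the self-map and contraction bounds both hold simultaneously. The genuinely nonlinear content of the lemma — that assumption \eqref{eqn:param-scaling-ass} is precisely what is needed to close the quadratic estimate against $L^{-1}$ — is already visible in the identity $m^{-1}C_N r = 2 C_N \|\kappa'\|_{\mathcal{H}_2}/m^2$, and after that identification the rest is a routine Banach fixed-point argument.
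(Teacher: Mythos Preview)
Your proposal is correct and is essentially what the paper has in mind: the paper's own proof consists of a single sentence, ``This is just the implicit function theorem with explicit estimates written out,'' with a reference to Lang, and the contraction-mapping argument you give is precisely the standard mechanism underlying that theorem. Your write-up in fact supplies more detail than the paper does, including the bootstrap step extending uniqueness to the larger ball and the observation that $N(0)=0$ in the intended application.
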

\begin{proof}
This is just the implicit function theorem with explicit estimates written out.  See, for example, Chapter XIV of \cite{Lang}. 
\end{proof}

Let $L$ be given by \eqref{eqn:linear-operator} for either one of $F = F_{\rm REHF}, F_{\rm PL}$, or $ F_{\rm LSC}$ and
\begin{equation}
    m_0 = \eps^{\delta^{1/4}} \label{eqn:m-0-def}.
\end{equation}
\begin{theorem} \label{thm:L-lower-bound-tot}
Let the assumptions of Theorem \ref{thm:eff-eqn} hold. Then $L$ is bounded below on $L^2(\Om)$:
\begin{equation}
    \| L f\|_{L^2(\Om)}  \geq C_1\|(-\Delta + m_0^{C_2})f\|_{L^2(\Om)} \label{eqn:L-lower-bound-tot}
\end{equation}
for some constant $C$ and any $f \in H^2(\Om)$.
\end{theorem}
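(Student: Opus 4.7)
My approach is to compute $M = d_\phi F|_{\phi=\phi_0}$ explicitly in each case, identify its leading part as a positive multiplication operator, bound it below pointwise, and then pass to the operator-norm inequality through an energy estimate. For $F = F_{\rm LSC}$, since $W_2$ is independent of $\phi$, differentiating under the momentum integral yields
\begin{equation*}
M_{\rm LSC}\varphi(x) = m(x)\varphi(x), \quad m(x) = \frac{\beta}{(2\pi\eps)^3}\int_{\R^3} (f_{\rm FD}(1-f_{\rm FD}))\bigl(\beta(p^2 + W_2 - \phi_0 + V_{\rm cut} - \mu)\bigr)\,dp,
\end{equation*}
an honest strictly positive multiplication. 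For $F = F_{\rm PL}$, differentiating the Landscape equation $(-\eps^2\Delta + V - \phi - V_{\rm cut})u_1 = 1$ in $\phi$ produces a nonlocal correction; for $F = F_{\rm REHF}$, one differentiates through the Cauchy integral representation of Section \ref{sec:expansion}. In both of the latter cases, Theorem \ref{thm:FD-leading-order} identifies the leading part of $M$ with the LSC multiplication $m$ plus a remainder of $L^2 \to L^2$ operator norm $o(m_0^{C_2})$.

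Next, I would establish the pointwise lower bound $m(x) \gtrsim m_0^{C_2}$. For large $\beta$, Laplace asymptotics give $m(x) \asymp \beta^{-1/2}\eps^{-3} e^{-\beta(W_2 + V_{\rm cut} - \phi_0 - \mu)}$. The Landscape bound $W_2 + V_{\rm cut} \leq V_{\max} + O(\delta^{1/2})$, the Sobolev estimate $\|\phi_0\|_\infty \lesssim \delta$, and Corollary \ref{cor:kappa-0-fixed-scale} (which normalizes $\eps^{-3}\beta^{-3/2}e^{-\beta(V_{\min}-\mu)} \asymp 1$) together yield $m(x) \gtrsim \beta\, e^{-C\beta\delta}$. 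Under Assumption \ref{ass:temperature}, $\beta\delta \lesssim \delta\log(\eps^{-1})$, so $m(x) \gtrsim \eps^{C\delta}$, which dominates $m_0^{C_2} = \eps^{C_2\delta^{1/4}}$ for any fixed $C_2 > 0$ since $\delta \ll \delta^{1/4}$ for small $\delta$.

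To pass from the pointwise bound on $M$ to the operator inequality, I would write $L = L_0 + M_1$ with $L_0 = -\Delta + m_0^{C_2}$ and $M_1$ multiplication by $m_1 = m - m_0^{C_2} \geq 0$, and expand
\begin{equation*}
\|Lf\|_{L^2}^2 = \|L_0 f\|_{L^2}^2 + 2\,\mathrm{Re}\langle L_0 f, M_1 f\rangle + \|M_1 f\|_{L^2}^2.
\end{equation*}
Two integrations by parts convert the cross term to $2m_0^{C_2}\int m_1|f|^2 - 2\,\mathrm{Re}\int m_1\bar f\, \Delta f$, transferring the bad derivatives of $m$ onto $f$ so that only $\|m_1\|_\infty \lesssim \beta$ enters. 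Bounding $|\mathrm{Re}\int m_1 \bar f \Delta f|$ by Cauchy--Schwarz with $\|\Delta f\|_{L^2} \leq \|L_0 f\|_{L^2}$ and absorbing the residual $\|f\|^2$-term against the positive contribution $\|M_1 f\|^2 \gtrsim \eps^{2C\delta}\|f\|^2 \gg m_0^{2C_2}\|f\|^2$ then yields the required lower bound. The non-local PL/REHF remainders are subleading and absorbed.

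The delicate point is controlling the derivatives of $m$: by Theorem \ref{thm:useful-L-p-est-on-DW}, $\|\nabla W\|_\infty \sim \delta \eps^{-1}$ and $\|\Delta W\|_\infty \sim \delta V_{\max} \eps^{-2}$, so $\|\nabla m\|_\infty$ and $\|\Delta m\|_\infty$ are large. A naive pointwise estimate would destroy the bound; the double integration by parts is engineered to avoid putting derivatives on $m$, trading them for the bounded quantity $\|m\|_\infty \sim \beta$ at the cost of a $\|\Delta f\|$ factor absorbed via $\|L_0 f\|$. The exponents $C_1, C_2$ in the statement then emerge from choosing $m_0^{C_2}$ sufficiently smaller than $m$ while still allowing $L_0$ to serve as a meaningful comparison elliptic operator.
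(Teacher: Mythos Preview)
Your LSC argument is essentially the paper's: $M_{\rm LSC}$ is multiplication by a strictly positive function bounded below by $m_0^{C}$, and the form inequality $L\geq -\Delta+m_0^{C}$ follows. Where your proposal breaks down is the reduction of the REHF and PL linearizations to ``multiplication by $m$ plus a small remainder.'' You invoke Theorem~\ref{thm:FD-leading-order}, but that theorem bounds $F_{\rm REHF}(\phi)-F_{\rm LSC}(\phi)$ as an element of $L^2(\Om)$, not the operator $d_\phi F_{\rm REHF}-d_\phi F_{\rm LSC}$ acting on $L^2(\Om)$. An $L^2$ bound on $F_X-F_Y$ that is uniform in $\phi$ says nothing about the $L^2\to L^2$ operator norm of the difference of their G\^ateaux derivatives; differentiating an approximation does not automatically produce an approximation of the derivative with the same error.

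In the REHF case your claim is in fact false. The paper shows (Lemma~\ref{lem:mass-sc}, via Lemma~\ref{lem:L0-explicit-compute}) that $M_{\rm REHF}$ is approximated not by a multiplication operator but by the pseudodifferential operator $M_{\rm sc}$ of \eqref{eqn:M-sc-def}, whose symbol in $p=|\eps\nabla|$ is $\tfrac{1}{p}\log\bigl|\tfrac{\sqrt{4t}+p}{\sqrt{4t}-p}\bigr|$. This decays like $p^{-1}$ at high frequency, so $M_{\rm REHF}$ is \emph{not} bounded below by any positive constant; the inequality $L_{\rm REHF}\geq -\Delta+m_0^{C}$ holds only because $-\Delta$ compensates where $M_{\rm REHF}$ is small. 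The paper extracts this through Lemma~\ref{lem:const-m-lower}, which bounds the $|\nabla|$-dependent factor below by $(1+\beta\eps^2|\nabla|^2)^{-1}$, after first squaring $M_{\rm REHF}$ so that operator monotonicity of the square root can be applied to a pure function of $|\nabla|$. In the PL case $M_{\rm PL}=m_{\rm PL}\,u_0^{-1}h^{-1}u_0$ genuinely contains the inverse Hamiltonian; the paper does not reduce it to multiplication either, but bounds $M_{\rm PL}^{*}M_{\rm PL}$ directly using $h^{-1}\gtrsim (v_{\max}-\eps^2\Delta)^{-1}$ together with commutator estimates on $[m_{\rm PL},h_0]$ controlled by Theorem~\ref{thm:useful-L-p-est-on-DW}.

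A smaller point: even in the LSC case your passage from $m\geq m_0^{C_2}$ to $\|Lf\|\geq C_1\|L_0 f\|$ does not close as written. After Cauchy--Schwarz on the cross term with $\|\Delta f\|\leq\|L_0 f\|$ you obtain only $\|Lf\|\geq\bigl|\,\|L_0 f\|-\|M_1 f\|\,\bigr|$, which is vacuous on low-frequency $f$ where $\|M_1 f\|\sim\|m\|_\infty\|f\|\gg m_0^{C_2}\|f\|=\|L_0 f\|$. The correct route is either the form inequality $L\geq L_0$ directly (as the paper states in Theorems~\ref{thm:L-lower-bound} and~\ref{thm:L-lower-bound-LSC}), or the elliptic estimate $\|L_0 L^{-1}\|\leq 1+\|m\|_\infty\|L^{-1}\|_{L^2\to L^2}\lesssim m_0^{-C}$.
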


Let $N$ be defined via \eqref{eqn:N-def} for $F$ being any one of $F_{\rm REHF}, F_{\rm PL}$, or $ F_{\rm LSC}$. Then we have the following result.

\begin{theorem} \label{thm:nonlin-tot}
Let the assumptions of Theorem \ref{thm:eff-eqn} hold. The nonlinear operator $N$ has the following estimate
\begin{equation}
	\|N(\phi_1) - N(\phi_2)\|_{L^2(\Om)} \leq C_3 m_0^{-C_4} (\|\phi_1\|_{H^1(\Om)} + \|\phi_2\|_{H^1(\Om)}) \|\phi_1 - \phi_2\|_{L^2(\Om)} \label{eqn:nonlin-tot}
\end{equation}
for $\phi_1$ and $\phi_2$ in $H^1(\Om)$ provided $\|\phi_i\|_{H^1(\Om)} \lesssim m_0^{C_3}$ for some large constant $C_3$, where $m_0$ is given in \eqref{eqn:m-0-def}.
\end{theorem}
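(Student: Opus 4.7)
The natural strategy is to apply the fundamental theorem of calculus twice. Recall that $N(\varphi) = F_Y(\phi_0,\mu) - F_Y(\phi_0+\varphi,\mu) + M\varphi$ with $M = d_\phi F_Y(\phi_0,\mu)$. Setting $\psi_t := (1-t)\phi_2 + t\phi_1$, we obtain
\begin{align}
N(\phi_1) - N(\phi_2) = -\int_0^1\!\!\int_0^1 d_\phi^2 F_Y(\phi_0 + s\psi_t,\mu)\bigl[\psi_t,\ \phi_1 - \phi_2\bigr]\, ds\, dt,
\end{align}
so it suffices to establish the bilinear bound
\begin{align}
\bigl\|d_\phi^2 F_Y(\phi_*,\mu)[g,h]\bigr\|_{L^2(\Om)} \lesssim m_0^{-C_4}\|g\|_{H^1(\Om)}\|h\|_{L^2(\Om)} \label{eq:target-bilin}
\end{align}
for every $\phi_*$ in a ball of radius $\lesssim m_0^{C_3}$ around $\phi_0$.

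For $F_{\rm LSC}$, the potential $W_2$ is $\phi$-independent, so direct differentiation yields
\begin{align}
d_\phi^2 F_{\rm LSC}(\phi_*,\mu)[g,h] = \frac{\beta^2}{(2\pi\eps)^3} \Big(\!\int_{\R^3}\! f''_{\rm FD}\bigl(\beta(p^2 + W_2 - \phi_* + V_{\rm cut} - \mu)\bigr) dp\!\Big)\, g(x) h(x),
\end{align}
a pointwise multiplier. I would bound it in $L^\infty$ using the exponential decay of $f''_{\rm FD}$ together with Corollary \ref{cor:kappa-0-fixed-scale}, which converts the prefactor $\eps^{-3}\beta^{-3/2}e^{-\beta(V_{\min}-\mu)}$ into $O(1)$, leaving a multiplier of size $\beta\, m_0^{-C}$; combining with the Sobolev embedding $H^1(\Om)\hookrightarrow L^4(\Om)$ and H\"older then gives \eqref{eq:target-bilin}. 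For $F_{\rm PL}$ the added wrinkle is differentiating $W_1 = 1/u_1$ in $\phi$; from \eqref{eqn:shifted-Landscape-eqn}, $d_\phi u_1\cdot h = (-\eps^2\Delta + V-\phi_*-V_{\rm cut})^{-1}(h u_1)$, and a second differentiation produces a two-resolvent expression. Using the $L^\infty$ bound $u_1 \lesssim V_{\min}^{-1}$ and the $L^\infty\to L^\infty$ resolvent bound implicit in the proof of Proposition \ref{prop:sup-norm-grad-u} reduces $d_\phi^2 F_{\rm PL}$ again to a multiplier estimate analogous to the LSC case.

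For $F_{\rm REHF}$, I would revisit the contour representation used in Section \ref{sec:expansion}:
\begin{align}
F_{\rm REHF}(\phi,\mu) = \den\oint f_{\rm FD}(\beta(z-\mu))\bigl(z - (-\eps^2\Delta + V - \phi)\bigr)^{-1} dz.
\end{align}
Two applications of the resolvent identity produce a triple-resolvent expression $R g R h R$ sandwiched by $g$ and $h$, whose Schatten norm is estimated by Kato--Seiler--Simon exactly as in Lemma \ref{lem:submain-lemma}, then transferred to $L^2$ via Lemma \ref{lem:den-to-schatten}. Choosing the contour at distance $\gtrsim v_{\max}$ from the spectrum makes each resolvent contribute $v_{\max}^{-1}$, while the contour-integral weight $\int|f_{\rm FD}(\beta(z-\mu))||dz|\lesssim \beta^{-1}e^{-\beta(V_{\rm cut}-\mu)}$ is compensated against $\eps^{-3}\beta^{-3/2}e^{-\beta(V_{\min}-\mu)}\lesssim 1$ from Corollary \ref{cor:kappa-0-fixed-scale} to absorb the semi-classical prefactor into a loss of order $m_0^{-C_4}$.

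The main obstacle is the REHF case: the operator-valued second derivative must be estimated in a functional-analytic norm (Schatten) and then transferred back to $L^2$ via $\den$, costing an additional $\eps^{-3/p}$. All exponential factors must be balanced against the semi-classical prefactor $\eps^{-3}$; only the low-temperature Assumption \ref{ass:temperature} together with Corollary \ref{cor:kappa-0-fixed-scale} makes the bookkeeping close, yielding a loss bounded by $m_0^{-C_4} = \eps^{-C_4\delta^{1/4}}$, which is subpolynomial in $\eps$ and hence harmless when plugged into Lemma \ref{lem:main-lemma}.
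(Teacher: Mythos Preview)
Your approach is sound and closely related to the paper's, though organized differently. The paper treats the three cases separately: for REHF it expands $N(\phi)$ as a resolvent series $\sum_{n\ge 2}N_n(\phi)$ about $\phi_0$ and estimates each $N_n(\phi_1)-N_n(\phi_2)$ via the telescoping identity $a^n-b^n=\sum_k b^k(a-b)a^{n-1-k}$, while for LSC and PL it writes $N(\phi_1)-N(\phi_2)$ out pointwise and Taylor-expands the Fermi--Dirac factor. Your integral representation through $d_\phi^2 F_Y$ is the unified, summed-up version of these computations; in the REHF case your triple-resolvent $R_*gR_*hR_*$ at the shifted base point $\phi_0+s\psi_t$ is exactly what one gets by resumming the paper's series, trading the series-convergence check for a uniform resolvent bound on the segment (both need $\|\phi_i\|$ small). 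The Schatten/den machinery and the balancing of $\eps^{-3}$ against $e^{-\beta(V_{\rm cut}-\mu)}$ via Corollary~\ref{cor:kappa-0-fixed-scale} are used identically.

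One small gap: the bilinear target \eqref{eq:target-bilin} with $\|h\|_{L^2}$ on the right does not follow from the LSC argument you sketch. If $d_\phi^2 F_{\rm LSC}[g,h]=m(x)g(x)h(x)$ with $m\in L^\infty$, then H\"older together with $H^1(\Om)\hookrightarrow L^4(\Om)$ gives $\|mgh\|_{L^2}\lesssim\|m\|_\infty\|g\|_{L^4}\|h\|_{L^4}\lesssim\|m\|_\infty\|g\|_{H^1}\|h\|_{H^1}$, not $\|h\|_{L^2}$; to land on $\|h\|_{L^2}$ you would need $g\in L^\infty$, which in three dimensions requires $g\in H^2$. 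In fact the paper itself only establishes the LSC and PL cases with $H^2$ norms on both factors (Theorems~\ref{thm:nonlin-LSC-est} and~\ref{thm:nonlin-PL-est}), and that weaker estimate is all that Lemma~\ref{lem:main-lemma} needs since $\mathcal H_1=H^2$ there. So your scheme goes through once you relax the norms in \eqref{eq:target-bilin} accordingly.
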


Theorems \ref{thm:L-lower-bound-tot} and \ref{thm:nonlin-tot} are proved in Sections \ref{sec:lin-anal} and \ref{sec:nonlin-anal} below, respectively. Section \ref{sec:integrability} provides some preliminary estimates on parameters $\epsilon,\beta, \mu$ etc. due to the integrability condition \eqref{eqn:integrability-cond}.

Now we apply Lemma \ref{lem:main-lemma} to \eqref{eqn:N-def}.
We take ${\mathcal H}_1 = H^2(\Om)$ and ${\mathcal H}_2 = L^2(\Om)$. By Theorem \ref{thm:L-lower-bound-tot}, the linear estimate \eqref{eqn:L-lower-ass} of Lemma \ref{lem:main-lemma} is satisfied with
\begin{equation}
    m = C_1\epsilon^{C\delta^{1/4}}
\end{equation}
for some constant $C_1$ and $C_2$ given in \eqref{eqn:L-lower-bound-tot}. Moreover, Theorem \ref{thm:nonlin-tot} shows that $C_N$ of Lemma \ref{lem:main-lemma} can be taken to be
\begin{equation}
    C_N = C_3 \eps^{-C_4\delta^{1/4}}
\end{equation}
where $C_3$ and $C_4$ are constants given in \eqref{eqn:nonlin-tot}. 
Together with equation \eqref{eqn:kappa-prime-est}, we see that
\begin{equation}
    \|\kappa'\| \leq C_N \|\kappa'\|_{L^2(\Om)} \lesssim C_1 \eps^{1/2- C_3\delta^{1/4}} \ll \min(m, m^2)
\end{equation}
by Assumption \ref{ass:pname}. This proves \eqref{eqn:param-scaling-ass} of Lemma \ref{lem:main-lemma}. Consequently, Theorem \ref{thm:eff-eqn} is proved by Lemma \ref{lem:main-lemma}.  We remark that the reality of $\phi$ is established by the complex conjugation symmetry \eqref{eqn:cc-sym} of \eqref{eqn:general-F-def} and the uniqueness of solution from the above fixed point argument. 
\end{proof}

\section{Linear Analysis} \label{sec:lin-anal}
In this section we prove Theorems \ref{thm:L-lower-bound-tot} in three parts: in each of the following subsections, we prove a version of Theorem \ref{thm:L-lower-bound-tot} for the case of REHF, PL, and LSC in Theorems \ref{thm:L-lower-bound}, \ref{thm:L-lower-bound-PL}, and  \ref{thm:L-lower-bound-LSC}, respectively.

\subsection{Proof of Theorem \ref{thm:L-lower-bound-tot}: REHF case}
Let
\begin{equation}
    M_{\rm REHF} = d_\phi F_{\rm REHF} \mid_{\phi = \phi_0}
\end{equation}
be the G\^ateaux derivative of $F_{\rm REHF}(\cdot, \mu)$ at $(\phi_0,\mu)$ (cf. \eqref{eqn:M-def-primitive}). Recall that $L_{\rm REHF} = -\Delta+ M_{\rm REHF}$ and that $m_0$ is defined by \eqref{eqn:m-0-def}.

\begin{theorem} \label{thm:L-lower-bound}
 Let the assumptions of Theorem \ref{thm:eff-eqn} hold.  Then $L_{\rm REHF}$ is a positive self-adjoint operator on $L^2(\Om)$ and
\begin{equation}
    L_{\rm REHF} \gtrsim -\Delta + m_0^C \label{eqn:L-lower-bound}
\end{equation}
for some constant $C$.
\end{theorem}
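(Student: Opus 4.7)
The plan is to split the proof into an algebraic non-negativity argument for $M_{\rm REHF}$ and then a quantitative lower bound obtained via comparison with the pointwise Landscape-regularized semi-classical (LSC) linearization. Set $H_0:=-\eps^2\Delta+V-\phi_0$, which by Assumption \ref{ass:pname} is a bounded-below self-adjoint operator on $L^2(\R^3)$. Using the Helffer-Sj\"{o}strand / contour representation introduced in the proof of Theorem \ref{thm:semi-classical-leading-orders}, one can write
\[
M_{\rm REHF}\eta \;=\; -\den\oint f_{\rm FD}(\beta(z-\mu))\,(z-H_0)^{-1}\eta(z-H_0)^{-1}\,dz,
\]
and inserting the spectral resolution $\{\lambda_i,\psi_i\}$ of $H_0$ and evaluating the contour by residues yields the Lehmann-type identity
\[
\langle\eta,M_{\rm REHF}\eta\rangle \;=\; \sum_{i,j}\frac{f_{\rm FD}(\beta(\lambda_j-\mu))-f_{\rm FD}(\beta(\lambda_i-\mu))}{\lambda_i-\lambda_j}\,\bigl|\langle\psi_i|\eta|\psi_j\rangle\bigr|^2,
\]
with the diagonal convention $-\beta f_{\rm FD}'(\beta(\lambda_i-\mu))$. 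Strict monotonicity of $f_{\rm FD}$ makes every coefficient non-negative, so $M_{\rm REHF}$ is a bounded non-negative self-adjoint operator on $L^2(\Om)$, and the Kato-Rellich theorem yields self-adjointness of $L_{\rm REHF}=-\Delta+M_{\rm REHF}$ on the Sobolev domain $H^2(\Om)$.

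To upgrade non-negativity to the quantitative bound $L_{\rm REHF}\gtrsim-\Delta+m_0^C$, I would compare $M_{\rm REHF}$ with $M_{\rm LSC}:=d_\phi F_{\rm LSC}(\cdot,\mu)\mid_{\phi_0}$. Because the Landscape $W_2$ appearing in \eqref{eqn:F-LSC-def} does not depend on $\phi$, $M_{\rm LSC}$ is the pure multiplication operator by
\[
m(x)\;=\;\frac{\beta}{(2\pi\eps)^3}\int_{\R^3}\!dp\,\bigl|f_{\rm FD}'\bigl(\beta(p^2+W_2(x)-\phi_0(x)+V_{\rm cut}-\mu)\bigr)\bigr|.
\]
A Laplace evaluation of the $dp$-integral, together with the elementary maximum-principle bound $W_2\leq V_{\max}-V_{\rm cut}$ and $\|\phi_0\|_\infty\lesssim\delta$, gives $m(x)\gtrsim \eps^{-3}\beta^{-1/2}e^{-\beta(V_{\max}-\mu+C\delta)}$. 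Feeding in the lower estimate of Corollary \ref{cor:kappa-0-fixed-scale} to trade $e^{-\beta(V_{\min}-\mu)}$ for $\eps^{3}\beta^{3/2}$ converts this into $m(x)\gtrsim\beta e^{-C\beta\delta}$. Assumption \ref{ass:temperature} gives $\beta\delta\lesssim\delta\log\eps^{-3}/K$, and because $\delta\ll\delta^{1/4}$ this delivers the uniform pointwise lower bound $m(x)\gtrsim m_0^{C_0}$ for some $C_0>0$ independent of the parameters.

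The last step is to transfer this bound from $M_{\rm LSC}$ to $M_{\rm REHF}$. I would rerun the landscape-conjugation argument of Section \ref{sec:expansion} one level deeper, using the same contour but now for the doubled kernel $(z-H_0)^{-1}\eta(z-H_0)^{-1}$. The Schatten-Kato-Seiler-Simon bounds from Lemma \ref{lem:submain-lemma}, applied with an additional $\eta$-insertion, should produce an operator-norm estimate of the form
\[
\|M_{\rm REHF}-M_{\rm LSC}\|_{L^2(\Om)\to L^2(\Om)}\;\lesssim\;\eps^{1/2-C_1\delta^{1/4}},
\]
which is much smaller than $m_0^{C_0}$. Combining this with $M_{\rm REHF}\geq 0$ and the pointwise bound $M_{\rm LSC}\geq m_0^{C_0}$ gives $M_{\rm REHF}\gtrsim m_0^{C}$, so that $L_{\rm REHF}=-\Delta+M_{\rm REHF}\gtrsim -\Delta+m_0^{C}$. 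The main obstacle is precisely this transfer step: Theorem \ref{thm:FD-leading-order} is stated at the level of the density, so the contour / landscape expansion of Section \ref{sec:expansion} has to be redone with an $\eta$-insertion between the two resolvents, and one must verify that the exponential margin $e^{-C\beta\delta}\sim m_0^{C_0}$ from the LSC side survives both the polynomial-in-$\beta$ factors produced by differentiating the Fermi-Dirac integrand and the semi-classical remainder inherited from Theorem \ref{thm:FD-leading-order}.
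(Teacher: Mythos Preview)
Your positivity argument for $M_{\rm REHF}$ via the divided-difference (Lehmann) representation is fine, and your lower bound $m(x)\gtrsim m_0^{C_0}$ for the LSC multiplier is correct and is exactly Theorem \ref{thm:L-lower-bound-LSC}. The gap is in the transfer step: the bound
\[
\|M_{\rm REHF}-M_{\rm LSC}\|_{L^2(\Om)\to L^2(\Om)}\;\lesssim\;\eps^{1/2-C_1\delta^{1/4}}
\]
is \emph{false}. If you actually carry out the landscape/semi-classical expansion of Section \ref{sec:expansion} with an $\eta$-insertion between the two resolvents, the leading order you obtain is \emph{not} the multiplication operator $M_{\rm LSC}$ but the Lindhard-type pseudodifferential operator $M_{\rm sc}$ of \eqref{eqn:M-sc-def} (this is Lemma \ref{lem:mass-sc}). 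The two agree only at zero momentum: on a plane wave $e^{ik\cdot x}$ the symbol of $M_{\rm sc}$ is the Lindhard function in $|\eps k|$, which for $|\eps k|^2\gtrsim \beta^{-1}$ is already smaller than $m_{\rm LSC}$ by a factor $\sim(\beta\eps^2 k^2)^{-1}$. Hence $M_{\rm sc}-M_{\rm LSC}$ has $L^2\to L^2$ norm of order $\|m_{\rm LSC}\|_\infty\gtrsim m_0^{C_0}$, not $\eps^{1/2}$, and the same is inherited by $M_{\rm REHF}-M_{\rm LSC}$. In particular, combining $M_{\rm REHF}\ge 0$ with $M_{\rm LSC}\ge m_0^{C_0}$ and this comparison gives nothing.

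The paper's proof is organized precisely around this obstruction. It compares $M_{\rm REHF}$ not with $M_{\rm LSC}$ but with $M_{\rm sc}$ (and its exponential surrogate $M_0$), and the comparison is only available as an $H^2\to L^2$ bound (Lemma \ref{lem:mass-sc}, Corollaries \ref{cor:mass-sc-star}--\ref{cor:mass-sc-large-T}), never as an $L^2\to L^2$ bound. Because $M_0$ is not self-adjoint, one cannot subtract directly; instead the paper squares, writing $M_{\rm REHF}^2=M_0^*M_0+\text{small}\cdot(1-\Delta)^2$, bounds $M_0^*M_0$ below by an explicit function of $|\nabla|$, and then invokes Lemma \ref{lem:const-m-lower} to get
\[
M_{\rm REHF}\;\gtrsim\; m_0^{C}\,(1-\beta\eps^2\Delta)^{-1}\;-\;\eps^{1/2}m_0^{-C}(1-\Delta).
\]
The high-frequency decay $(1-\beta\eps^2\Delta)^{-1}$ in this lower bound is \emph{genuine}---it reflects the Lindhard decay that kills your $L^2$ comparison---but it is harmless once one adds back $-\Delta$ in $L_{\rm REHF}=-\Delta+M_{\rm REHF}$. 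That frequency-splitting, carried by Lemma \ref{lem:const-m-lower} and the square-root monotonicity trick, is the missing ingredient in your outline.
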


We begin by recording a few auxiliary lemmas first.

\begin{lemma} \label{lem:M-basic-form}
Let the assumptions of Theorem \ref{thm:eff-eqn} hold. For any $\varphi \in L^2(\Om)$, we have
\begin{equation}
     M_{\rm REHF} \varphi = -\den \oint f_{\rm FD}(\beta(z + V_{\rm cut} -\mu)) (z-h)^{-1}\varphi(z-h)^{-1}, \label{eqn:M-def}
\end{equation}
where
\begin{equation}
    h = -\eps^2 \Delta + V - V_{\rm cut} - \phi_0. \label{eqn:h-h-0-def}
\end{equation}
Moreover, $M_{\rm REHF}$ is a bounded positive self-adjoint operator on $L^2(\Om)$ with
\begin{equation}
    \|M_{\rm REHF}\|_{\mathfrak{S}^\infty(\Om)} \lesssim m_0^{-C} . \label{eqn:M-upper-bound} 
\end{equation}
for some constant $C$.
\end{lemma}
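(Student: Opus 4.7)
The proof splits naturally into four steps: derivation of formula \eqref{eqn:M-def}, self-adjointness, positivity, and the Schatten-infinity bound \eqref{eqn:M-upper-bound}.

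\textbf{Step 1 (Derivation of \eqref{eqn:M-def}).} I would set $h_\phi = -\eps^2\Delta + V - V_{\rm cut} - \phi$, so $F_{\rm REHF}(\phi,\mu) = \den f_{\rm FD}(\beta(h_\phi + V_{\rm cut} - \mu))$, and use the same Cauchy representation as in the proof of Theorem \ref{thm:semi-classical-leading-orders}:
\begin{equation*}
f_{\rm FD}(\beta(h_\phi + V_{\rm cut}-\mu)) = \oint f_{\rm FD}(\beta(z+V_{\rm cut}-\mu))(z-h_\phi)^{-1},
\end{equation*}
with contour $\Gamma$ enclosing $\spec(h_\phi)$ inside the region of analyticity $\Re z > -(V_{\rm cut}-\mu)$. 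Lemma \ref{lem:mu-estimate} and the choice $V_{\rm cut} < V_{\min}$ guarantee $V_{\rm cut} - \mu > 0$ is uniform. The G\^{a}teaux derivative satisfies $\frac{d}{d\phi}(z-h_\phi)^{-1}[\varphi] = -(z-h_\phi)^{-1}\varphi(z-h_\phi)^{-1}$; exchanging this derivative with $\den$ and with the contour integral (justified by the uniform Hilbert-Schmidt bounds on the resolvent kernels used in Step 4) and evaluating at $\phi = \phi_0$ yields \eqref{eqn:M-def}.

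\textbf{Step 2 (Self-adjointness).} The integral kernel of $M_{\rm REHF}$ is
\begin{equation*}
K(x,y) = -\oint f_{\rm FD}(\beta(z+V_{\rm cut}-\mu))\,R(z;x,y)\,R(z;y,x)\,\frac{dz}{2\pi i},
\end{equation*}
which is manifestly symmetric in $(x,y)$. Choosing $\Gamma$ invariant under complex conjugation and using $\overline{R(z;x,y)} = R(\bar z; x, y)$ (valid since $V - V_{\rm cut} - \phi_0$ is real) together with $\overline{f_{\rm FD}(\beta(\bar z + V_{\rm cut}-\mu))} = f_{\rm FD}(\beta(z+V_{\rm cut}-\mu))$ shows $K$ is real, hence $M_{\rm REHF}$ is self-adjoint.

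\textbf{Step 3 (Positivity).} Using the Floquet-Bloch decomposition $h = \int E_\alpha |\phi_\alpha\rangle\langle \phi_\alpha|\, d\alpha$ and collapsing the contour by residues, I would derive
\begin{equation*}
\langle \varphi, M_{\rm REHF}\varphi\rangle = -\int\!\!\int |\langle \phi_\alpha, \varphi \phi_\beta\rangle|^2\,\frac{f(E_\alpha) - f(E_\beta)}{E_\alpha - E_\beta}\,d\alpha\, d\beta
\end{equation*}
with $f(\lambda) = f_{\rm FD}(\beta(\lambda + V_{\rm cut}-\mu))$ (and $f'(E)$ on the diagonal). Since $f_{\rm FD}$ is strictly decreasing, each summand is nonnegative, which gives positivity.

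\textbf{Step 4 (Operator-norm bound -- the main obstacle).} By duality with $\psi \in L^2(\Om)$ and the Hilbert-Schmidt Cauchy-Schwarz inequality,
\begin{equation*}
|\langle \psi, M_{\rm REHF}\varphi\rangle| \leq \int_\Gamma |f_{\rm FD}(\beta(z+V_{\rm cut}-\mu))|\,\|\psi R(z)\|_{\mathfrak{S}^2(\Om)}\,\|R(z)\varphi\|_{\mathfrak{S}^2(\Om)}\,|dz|.
\end{equation*}
A periodic Kato-Seiler-Simon-type estimate, as employed in Section \ref{sec:expansion}, gives $\|\varphi R(z)\|_{\mathfrak{S}^2(\Om)} \lesssim \eps^{-3/2} d(z)^{-1/2} \|\varphi\|_{L^2(\Om)}$, provided we choose $\Gamma$ at distance $d(z) \gtrsim 1$ from $\spec(h)$ (allowed since $V_{\min} - V_{\rm cut} \gtrsim \delta^{1/4}$ and $V_{\rm cut} - \mu > K$). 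The contour integral is controlled by $\int_\Gamma |f_{\rm FD}(\beta(z+V_{\rm cut}-\mu))|\,|dz| \lesssim \beta^{-1} e^{-\beta(V_{\rm cut}-\mu)}$. Combining these with Corollary \ref{cor:kappa-0-fixed-scale} (which furnishes $\eps^{-3}\beta^{3/2}e^{-\beta(V_{\min}-\mu)} \lesssim 1$, hence effectively cancels the $\eps^{-3}$ from the two Hilbert-Schmidt bounds against the exponential) and using $V_{\min} - V_{\rm cut} \sim \delta^{1/4}$ gives
\begin{equation*}
\|M_{\rm REHF}\|_{\mathfrak{S}^\infty(\Om)} \lesssim \beta^{1/2} e^{\beta(V_{\min} - V_{\rm cut})} \lesssim \eps^{-C\delta^{1/4}} = m_0^{-C},
\end{equation*}
after invoking Assumption \ref{ass:temperature} to convert $\beta \cdot \delta^{1/4}$ into a multiple of $\delta^{1/4}\log \eps^{-1}$. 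The delicate point is the bookkeeping by which the $\eps^{-3}$ from the density operator exactly offsets the leading exponential decay furnished by Corollary \ref{cor:kappa-0-fixed-scale}, so that only the subleading factor $e^{\beta(V_{\min}-V_{\rm cut})}$ -- which is a mild power of $\eps^{-1}$ -- survives.
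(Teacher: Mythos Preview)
Your proposal is correct and, for Step~4, follows essentially the same duality/Hilbert--Schmidt route as the paper: pair against a test function, bound by $\|\psi(z-h)^{-1}\|_{\mathfrak{S}^2(\Om)}\|\varphi(z-h)^{-1}\|_{\mathfrak{S}^2(\Om)}$ via the tracial characterization, apply a Kato--Seiler--Simon estimate (the paper inserts $(1-\eps^2\Delta)^{-1}(1-\eps^2\Delta)$ explicitly to reduce to the free resolvent, which is what your appeal to ``as employed in Section~\ref{sec:expansion}'' amounts to), integrate the Fermi--Dirac weight along $\Gamma$, and invoke Corollary~\ref{cor:kappa-0-fixed-scale}. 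One cosmetic difference: the paper places the contour at $d(z)\sim\delta^{1/4}$ (inheriting the choice from Section~\ref{sec:expansion}), which produces an extra $\delta^{-1/2}$ that is harmlessly absorbed into $m_0^{-C}$; your choice $d(z)\gtrsim 1$ is cleaner and equally valid since $V_{\rm cut}-\mu>K$. Your $d(z)^{-1/2}$ exponent for $\|\varphi R(z)\|_{\mathfrak{S}^2}$ is a bit optimistic on the horizontal legs of $\Gamma$ (the paper's $(1+d(z)^{-1})$ is what the resolvent-transfer trick actually gives), but with $d(z)\gtrsim 1$ this does not affect the conclusion.

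For Steps~1--3 you are more explicit than the paper, which simply defers formula~\eqref{eqn:M-def}, positivity, and most of self-adjointness to \cite{CS} (noting only that self-adjointness follows from the tracial form and cyclicity of trace). Your kernel-symmetry argument for self-adjointness and your Bloch-decomposition argument for positivity are standard and correct alternatives to citing \cite{CS}.
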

\begin{proof}[Proof of Lemma \ref{lem:M-basic-form}]
We will only prove \eqref{eqn:M-upper-bound}. The rest of the properties are proved in \cite{CS}.  We remark that $h$ is self-adjoint on $L^2(\R^3)$ since each of the potential functions $V$ and $\phi_0$ is real.  Let $f,g \in L^2(\Om)$. Let $\Tr_\Om$ denote the trace per volume $\Om$ operator (see Appendix \ref{sec:per-vol-set-up}):
\begin{equation}
    \Tr_\Om A = \frac{1}{|\Om|}\Tr \chi_\Om A,
\end{equation}
where $A$ is an operator on $L^2(\R^3)$ and $\chi_\Om$ is the indicator function of $\Om$. Since $V-V_{\rm cut}-\phi_0$ is bounded, we see that $h$ is self-adjoint. Moreover,
\begin{equation}
    \|(1-\epsilon^2\Delta)(z-h)^{-1}\|_{\mathfrak{S}^\infty(\Om)} \lesssim \left(1+\frac{1}{d(z)}\right), \label{eqn:change-of-resolvent}
\end{equation}
where $d(z)$ is the distance from $z$ to the contour, $\Gamma$ (see Figure \ref{fig:cauchy-int-contour-general} for definition of $\Gamma$), of integration in 
\begin{equation}
    \oint = \frac{1}{2\pi i}\int_\Gamma dz. 
\end{equation}
Let $\mathfrak{S}^p(\Om)$ denote the standard Schatten norm associated to $\Tr_\Om$ (see Appendix \ref{sec:per-vol-set-up}). By \eqref{eqn:change-of-resolvent}, the definition of $\den$ (rigorously defined via \eqref{eqn:den-def-via-riesz}), and the Kato-Seiler-Simon inequality, we see that
\begin{align}
    &\hspace{-1cm} \frac{1}{|\Om|}\lan g, M_{\rm REHF}f \ran_{L^2(\Om)} \\
    =&\, \oint f_{\rm FD}(\beta(z-\mu)) \Tr_{\Om} \bar g (z-h)^{-1} f (z-h)^{-1}  \label{eqn:M-tracial-char} \\
    \leq & \,  \oint  |f_{\rm FD}(\beta(z-\mu))| \|g (z-h)^{-1} \|_{\mathfrak{S}^2(\Om)} \|f (z-h)^{-1} \|_{\mathfrak{S}^2(\Om)} \notag \\
    \leq & \,  \oint  \left| 1+\frac{1}{d(z)} \right|^2 |f_{\rm FD}(\beta(z-\mu))| \|g\|_{L^2(\Om)}\|f\|_{L^2(\Om)} \|(1-\epsilon^2\Delta)^{-1}\|_{\mathfrak{S}^2(\Om)}^2 \notag \\
    \leq & \,  \epsilon^{-3}\oint  \frac{|f_{\rm FD}(\beta(z-\mu))|}{d(z)^2} \|g\|_{L^2(\Om)}\|f\|_{L^2(\Om)}. \notag
\end{align}
Since $f_{\rm FD}(\beta(z-V_{\rm cut} - \mu))$ is holomorphic on $\{ z : \Re z > \mu - V_{\rm cut} \}$, we may choose the contour such that $|d(z)| = O(\|V_1\|_\infty) = O(\delta^{1/4})$ (see \eqref{eqn:cut-size}) and
\begin{equation}
     \lan g, M_{\rm REHF} f \ran_{L^2(\Om)} \lesssim    \beta^{-1}\delta^{-1/2}\epsilon^{-3} e^{-\beta(V_{\rm cut} - \mu - \delta^{1/4})}\|g\|_{L^2(\Om)}\|f\|_{L^2(\Om)},
\end{equation}
where we note that the additional factor of $\beta^{-1}$ came from integration in $z$. By Corollary \ref{cor:kappa-0-fixed-scale}, 
we see that \eqref{eqn:M-upper-bound} follows. This proves the $\mathfrak{S}^\infty(\Om)$ bound for $M_{\rm REHF}$. One can see that $M_{\rm REHF}$ is self-adjoint by using the tracial characterization \eqref{eqn:M-tracial-char} and the cyclicity of trace.
\end{proof}

Let
\begin{equation}
    |\nabla| = \sqrt{-\Delta}, \label{eqn:abs-nabla-def}
\end{equation}
where the square root is taken via the Borel functional calculus under periodic boundary condition on $\Om$. The following lemma is crucial to our linear analysis and is based on unpublished notes of Chenn and I. M. Sigal, and proved in Lemma 6 of \cite{CZ}  with $\eps=1$ . 

\begin{lemma} \label{lem:L0-explicit-compute}

Assume that $v>0$. 
\begin{multline}\label{eqn:MexplicitForm}
   -\den\oint dz\, f_{\rm FD}(\beta(z+v))(z+\eps^2\Delta)^{-1}\varphi(z+\eps^2\Delta)^{-1}=\\
    \frac{1}{8\pi^2 \eps^3 }  \int_{0}^\infty dt \, f_{\rm FD}(\beta(t+v))   \frac{1}{ |\epsilon \nabla|}\log \left( \left| \frac{\sqrt{4t} +   |\eps\nabla|}{\sqrt{4t} -  |\eps\nabla|} \right| \right)\, \varphi  .
\end{multline}
\end{lemma}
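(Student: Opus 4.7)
Since the background potential $v$ is a constant, the free Hamiltonian $H_0 := -\eps^2\Delta + v$ is translation invariant, and I would work on the right-hand side of the identity by computing everything explicitly in Fourier space. The first step is to write
$(z+\eps^2\Delta)^{-1}=-(H_0-v-z)^{-1}$ and find its integral kernel via Fourier inversion:
\begin{equation*}
G_z(x) = -\frac{e^{-\alpha(z)|x|}}{4\pi \eps^2 |x|}, \qquad \alpha(z) = \frac{\sqrt{v-z}}{\eps},\ \Re\alpha(z)>0.
\end{equation*}
Because $G_z$ is even, the diagonal of $(z+\eps^2\Delta)^{-1}\varphi(z+\eps^2\Delta)^{-1}$ equals the convolution $(G_z^2 * \varphi)$, i.e., the Fourier multiplier $\widehat{G_z^2}(|\nabla|)$ applied to $\varphi$. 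A direct radial computation, using the standard identity $\int_0^\infty \frac{\sin(br)}{r}e^{-ar}\,dr = \arctan(b/a)$, gives
\begin{equation*}
\widehat{G_z^2}(k) = \frac{1}{4\pi\eps^4 |k|}\arctan\!\left(\frac{|k|}{2\alpha(z)}\right).
\end{equation*}

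Next I would collapse the contour. Change variables $w=z+v$, so $\Gamma$ becomes a contour encircling the spectrum $[v,\infty)$ of $H_0$ counterclockwise and $\alpha(w)=\sqrt{v-w}/\eps$ develops a branch cut along $[v,\infty)$. For $w = v+t\pm i0$ with $t>0$, the principal branch of $\sqrt{\cdot}$ yields $\alpha_\pm = \mp i\sqrt{t}/\eps$, and the formula $\arctan(\zeta)=\tfrac{1}{2i}\log\tfrac{1+i\zeta}{1-i\zeta}$ gives
\begin{equation*}
\arctan\!\left(\frac{|k|}{2\alpha_\pm}\right)
= \pm\frac{1}{2i}\log\!\left|\frac{2\sqrt{t}+\eps|k|}{2\sqrt{t}-\eps|k|}\right| \;+\; (\text{an }|k|\text{-dependent real multiple of }\pi\text{ coming from the branch of }\log\text{ when }\eps|k|>2\sqrt{t}),
\end{equation*}
where the real $\pi$-contributions are \emph{equal} on the two sides and therefore cancel when the jump $(\widehat{G^2_w})_- - (\widehat{G^2_w})_+$ is formed, leaving
\begin{equation*}
(\widehat{G^2_w})_--(\widehat{G^2_w})_+ = -\frac{i}{4\pi\eps^4|k|}\log\!\left|\frac{2\sqrt{t}+\eps|k|}{2\sqrt{t}-\eps|k|}\right|.
\end{equation*}
Since $f_{\rm FD}(\beta w)$ is holomorphic in a neighborhood of $[v,\infty)$ (as $\beta>0$ pushes the poles of $f_{\rm FD}$ onto the imaginary axis away from the real line), the standard contour-to-cut collapse $\tfrac{1}{2\pi i}\int_\Gamma = \tfrac{1}{2\pi i}\int_v^\infty[F_-(w)-F_+(w)]\,dw$ applies.

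Finally, inserting the jump, substituting $t=w-v$, and recognizing $\eps|k|$ as the Fourier symbol of $|\eps\nabla|$ and $2\sqrt{t}=\sqrt{4t}$ delivers precisely the right-hand side of \eqref{eqn:MexplicitForm}. The arithmetic is straightforward; the only point requiring genuine care is Step~3, namely the tracking of the branch of $\sqrt{v-w}$ across the cut and the verification that the $i\pi$-pieces of $\log$ on the two sides cancel so that only the real logarithm $\log\lvert\cdots\rvert$ survives in the jump. As a sanity check one can specialize to $\eps=1$ and recover Lemma~6 of \cite{CZ}, where this identity was established.
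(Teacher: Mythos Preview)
The paper does not provide its own proof of this lemma; it explicitly defers to Lemma~6 of \cite{CZ} (stated just before the lemma: ``proved in Lemma 6 of \cite{CZ} with $\eps=1$''). So there is nothing in the present paper to compare against beyond that citation.

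Your direct computation is correct and self-contained: identifying $\den[(z+\eps^2\Delta)^{-1}\varphi(z+\eps^2\Delta)^{-1}]$ as the convolution $G_z^2*\varphi$, evaluating $\widehat{G_z^2}(k)=\tfrac{1}{4\pi\eps^4|k|}\arctan(|k|/2\alpha)$ via the radial integral, and collapsing the contour onto the branch cut of $\sqrt{-z}$ yields exactly the stated formula. Your handling of the branch issue is also right: writing the boundary values as oscillatory integrals $\int_0^\infty \tfrac{\sin(|k|r)}{r}e^{\pm 2i\sqrt{t}r/\eps}dr$ and using the Frullani-type identities shows the imaginary $\pi$-contributions are equal on the two sides and cancel in the jump, leaving precisely $-\tfrac{i}{4\pi\eps^4|k|}\log\bigl|\tfrac{2\sqrt t+\eps|k|}{2\sqrt t-\eps|k|}\bigr|$.

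One minor slip: your first display has $\alpha(z)=\sqrt{v-z}/\eps$, but since $(z+\eps^2\Delta)^{-1}=-(-\eps^2\Delta-z)^{-1}$ the correct exponent is $\alpha(z)=\sqrt{-z}/\eps$. Your later formula $\alpha(w)=\sqrt{v-w}/\eps$ after the shift $w=z+v$ is consistent with this, so the typo does not propagate. Relatedly, the shift $w=z+v$ is cosmetic: the cut already sits on $[0,\infty)$ in the $z$-plane, and collapsing there directly produces $\int_0^\infty dt\, f_{\rm FD}(\beta(t+v))(\cdots)$ without detour.
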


In view of Lemma \ref{lem:L0-explicit-compute}, define 
\begin{equation}
    M_{\rm sc}   =  \frac{1}{8\pi^2 \eps^3} \int_{0}^\infty   dt\,  f_{FD}(\beta(t+W+V_{\rm cut}-\phi_0-\mu)) \frac{1}{ |\epsilon \nabla|}\log \left( \left| \frac{\sqrt{4t} +   |\epsilon \nabla|}{\sqrt{4t} -  |\epsilon \nabla|} \right| \right) , \label{eqn:M-sc-def}
\end{equation}
where $W=1/u$ and
\begin{equation}
    (-\epsilon^2\Delta+V-V_{\rm cut})u=1. \label{eqn:W-for-V-V-cut}
\end{equation}
 We remark that when  $M_{\rm sc}$ acts on the zero-($0$-)eigenvectors of $|\nabla|$ (i.e. constants), it is assumed that the integrand in \eqref{eqn:M-sc-def} is interpreted as 
\begin{equation}
    \frac{1}{0}\log \left( \left| \frac{\sqrt{4t} +  0}{\sqrt{4t} - 0} \right| \right) = \frac{1}{\sqrt{t}} \label{eqn:0-p-interpretation}
\end{equation}
for $t > 0$, to ensure continuity of the integrand.  Finally, we recall that $m_0 = \epsilon^{\delta^{1/4}}$ was defined in \eqref{eqn:m-0-def}. 

\begin{lemma} \label{lem:mass-sc}
Let the assumptions of Theorem \ref{thm:eff-eqn} hold. Let $p,q \geq 2$ satisfy $\frac{1}{p} +\frac{1}{q} = \frac{1}{2}$, then
\begin{equation}
      \left\| M_{\rm REHF}f - M_{\rm sc} f \right\|_{L^2(\Om)} \lesssim \epsilon^{1/p}  m_0^{-C} \| f\|_{L^q(\Om)} \label{eqn:M-sc-explicit-form}
\end{equation}
for some constant $C$. Consequently, picking $p=2$ and $q = \infty$, and by Sobolev's inequality,
\begin{equation}
      \left\| M_{\rm REHF} f - M_{\rm sc} f \right\|_{L^2(\Om)} \lesssim \epsilon^{1/2}  m_0^{-C}  \| f\|_{H^2(\Om)}.
\end{equation}
\end{lemma}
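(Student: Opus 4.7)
The plan is to reduce $M_{\rm REHF}$ to $M_{\rm sc}$ by combining the Landscape conjugation with the explicit integral identity of Lemma \ref{lem:L0-explicit-compute}. The philosophy parallels the proof of Theorem \ref{thm:semi-classical-leading-orders}: conjugate by $u$ to remove the rough potential $V$, drop the resulting drift term, and then control the remaining commutators by the Landscape estimates of Theorem \ref{thm:useful-L-p-est-on-DW}.

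First, since $u$ is a multiplication operator, $\den(u^{-1} A u) = \den A$ and $u^{-1}\varphi u = \varphi$. A direct computation using $(-\eps^2 \Delta + V - V_{\rm cut})u = 1$ gives
\begin{equation*}
    \tilde h := u^{-1} h u = -\eps^2 \Delta - 2\eps^2 (\nabla u/u) \cdot \nabla + (W - \phi_0),
\end{equation*}
so that
\begin{equation*}
    M_{\rm REHF} \varphi = -\den \oint f_{\rm FD}(\beta(z + V_{\rm cut} - \mu)) (z - \tilde h)^{-1} \varphi (z - \tilde h)^{-1}.
\end{equation*}
Using the resolvent identity, I would then replace $\tilde h$ by $\tilde h_0 := -\eps^2 \Delta + (W - \phi_0)$. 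Since $\nabla u/u = -\nabla W / W$ and $W \gtrsim V_{\min}$, Theorem \ref{thm:useful-L-p-est-on-DW} bounds $\|\nabla W\|_{L^p}$ by $\delta\, \eps^{-(p-1)/p}$, so after the Kato--Seiler--Simon inequality the drift contribution produces an $\mathfrak{S}^p(\Om)$-error in $\den$ of order $\eps^{1/p}$, times $m_0^{-C}$ once the contour is integrated exactly as in Lemma \ref{lem:M-basic-form}.

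Second, I would expand $(z - \tilde h_0)^{-1} = \sum_{n \geq 0} R [(W-\phi_0)R]^n$ with $R = (z + \eps^2 \Delta)^{-1}$, and similarly for the other resolvent, producing a double series $\sum_{n,m}(R\psi)^n R \varphi R(\psi R)^m$ with $\psi := W - \phi_0$. I would then commute every $\psi$ past $R$ to the outside, reducing the leading terms to $\sum_{n,m} \psi^n R^{n+1} \varphi R^{m+1} \psi^m$. Each commutator $[R, \psi] = R(-2\eps^2 \nabla \psi \cdot \nabla + \eps^2 \Delta \psi) R$ is bounded in $\mathfrak{S}^p(\Om)$ by $\eps^{1/p}$ via Theorem \ref{thm:useful-L-p-est-on-DW} and $\|\phi_0\|_{H^2} \lesssim \delta$, exactly as in the proof of Lemma \ref{lem:commutator-estimate}. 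At a fixed point $x$, the leading pointwise sum collapses via the scalar identity $\sum_{n,m} \psi(x)^{n+m} R^{n+1} \varphi R^{m+1} = (z + \eps^2 \Delta - \psi(x))^{-1} \varphi (z + \eps^2 \Delta - \psi(x))^{-1}$, and the contour shift $z \mapsto z + \psi(x)$ brings this into the form required by Lemma \ref{lem:L0-explicit-compute} with constant $v = \psi(x) + V_{\rm cut} - \mu$; applying that lemma pointwise in $x$ reconstitutes precisely $M_{\rm sc} \varphi$.

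The main obstacle will be the bookkeeping of the double series and the geometric accumulation of commutator errors over $n,m$. This is handled by choosing the contour $\Gamma$ so that $d(z) \gg v_{\max} \gtrsim \|\psi\|_\infty$; then $\|\psi R\|$ is uniformly strictly less than one and the tails converge geometrically, while the single commutator contributing at each order produces the claimed $\eps^{1/p}$ gain. The exponential prefactor from the contour integral of $f_{\rm FD}(\beta(z + V_{\rm cut} - \mu))$ is absorbed into $m_0^{-C}$ via Corollary \ref{cor:kappa-0-fixed-scale}, precisely as in the derivation of \eqref{eqn:M-upper-bound} in Lemma \ref{lem:M-basic-form}, yielding the final bound $\eps^{1/p} m_0^{-C} \|f\|_{L^q(\Om)}$.
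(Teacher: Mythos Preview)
Your proposal is correct and follows essentially the same approach as the paper: conjugate by $u$, identify the leading contribution as $M_{\rm sc}$ via the integral kernel/Taylor computation and Lemma~\ref{lem:L0-explicit-compute}, and control the remainder using the Landscape estimates and Kato--Seiler--Simon. The paper's proof is slightly more streamlined in that it invokes Lemma~\ref{lem:submain-lemma} directly to bound $R(W,\tildeW) - R_{L/R}(\tildeW)$ in one stroke (drift and commutators together), rather than separating the drift removal from the commutator analysis as you propose, but both organizations lead to the same $\eps^{1/p} m_0^{-C}\|f\|_{L^q}$ bound.
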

\begin{proof}
We start with equation \eqref{eqn:M-def}. Let $u$ solve the shifted Landscape equation \eqref{eqn:W-for-V-V-cut}. Conjugating inside $\den$ by $u$, we obtain
\begin{equation}
    M_{\rm REHF} f = -\den\oint f_{\rm FD}(\beta(z+V_{\rm cut}-\mu)) R(W,\tildeW)fR(W,\tildeW),
\end{equation}
where $W=1/u$,  $\tildeW =1/u - \phi_0$  and $R(W,\tildeW)$ is defined in \eqref{eqn:R-def}. Similarly, we recall the definition of $R_R(\tildeW)$ and $R_L(\tildeW)$ in \eqref{eqn:RR-def} and \eqref{eqn:RL-def}, respectively. We write
\begin{equation}
     R_{L}' := R(W,\tildeW) - R_{L}(\tildeW),  \label{eqn:R-prime-RL-def}
\end{equation}
and define $R_R'$ similarly. 
Then, we may rewrite $M_{\rm REHF} f$ as
\begin{align}
    M_{\rm REHF} f = -\den\oint & f_{\rm FD}(\beta(z+V_{\rm cut}-\mu)) R_L(\tildeW)fR_R(\tildeW) \label{eqn:M-sc-primitive-def}  \\
    &- \den\oint f_{\rm FD}(\beta(z+V_{\rm cut}-\mu))  [R'_L f R(W, \tildeW) \label{eqn:M-higher-1} \\
    &\hspace{3cm} + R(W, \tildeW)f R_R' + R'_L f R'_R]. \label{eqn:M-higher-2}
\end{align}

We first consider the leading order term on right hand side of \eqref{eqn:M-sc-primitive-def}.
\begin{equation}\label{eq:Mlead}
   M_{\rm lead} :=-\den \oint f_{\rm FD}(\beta(z+V_{\rm cut}-\mu)) R_L(\tildeW)fR_R(\tildeW).
\end{equation} 
On $L^2(\R^3)$, the operators $(z+\epsilon^2\Delta)^{-n}$ have integral kernels
\begin{equation}
    \frac{1}{(2\pi)^3} \int_{\R^3} dp \,  \frac{1}{(z-\epsilon^2 p^2)^n} e^{ip(x-y)}. \label{eqn:integral-kernel-of-n-fold-resolvent}
\end{equation}
Inserting \eqref{eqn:integral-kernel-of-n-fold-resolvent} into  \eqref{eq:Mlead}, we see that 
\begin{multline}
      (M_{\rm lead}f)(x) =  \frac{1}{(2\pi)^6} \oint dz  \int_{\R^3 \times \R^3\times \R^3} dp dy dq \sum_{n, m \geq 1} f_{\rm FD}(\beta(z+V_{\rm cut}-\mu))  \\
     \times \frac{\tildeW^{n+m}(x)f(y)}{(z-\epsilon^2 p^2)^n(z-\epsilon^2 q^2)^m}  e^{i(p-q)(x-y)} .
\end{multline}
For any real numbers $A, B$, we note that
\begin{equation}
    \frac{d^k}{dz^k} \frac{1}{(z-A)(z-B)} = (-1)^k k!\sum_{n+m = k; \ 1 \leq m,n}\frac{1}{(z-A)^n(z-B)^m}.
\end{equation}
It follows by Taylor's theorem that
\begin{align*}
    (M_{\rm lead}f)(x) =& \frac{1}{(2\pi)^6}\oint dz \int_{\R^3 \times \R^3\times \R^3} dp dy dq \sum_{k \geq 0}  f_{\rm FD}(\beta(z+V_{\rm cut}-\mu)) \\
    &\times \frac{(-1)^{k} \tildeW^{k}(x)}{k!} \frac{d^k}{dz^k} \frac{f(y)}{(z-\epsilon^2  p^2)(z-\epsilon^2 q^2)} e^{i(p-q)(x-y)}\\
    =& \frac{1}{(2\pi)^6} \oint dz \int_{\R^3 \times \R^3\times \R^3} dp dy dq f_{\rm FD}(\beta(z+V_{\rm cut}-\mu)) \\
    &\times  \frac{f(y)}{(z-\epsilon^2 p^2-\tildeW(x))(z-\epsilon^2 q^2-\tildeW(x))}e^{i(p-q)(x-y)}.
\end{align*}
 
By Fourier transforming back to the position basis, we have that
\begin{equation}
     (M_{\rm lead}f)(x) = \oint dz f_{\rm FD}(\beta(z+V_{\rm cut}-\mu)) \den[(z-h(x))^{-1}f(z-h(x))^{-1}](x),
\end{equation}
where
\begin{equation}
    h(x) = -\eps^2\Delta + W(x) - \phi_0(x)
\end{equation}
and $W=1/u$ is defined by \eqref{eqn:W-for-V-V-cut}. Note that $h(x)$ depends on $x$ and is a family of translation invariant operators indexed by $x$. Consequently, by Lemma \ref{lem:L0-explicit-compute}, we see that 
\begin{equation}
	M_{\rm lead} = M_{\rm sc}, \label{eqn:general-MexplicitForm}
\end{equation}
where $M_{\rm sc}$ is given in \eqref{eqn:M-sc-def}.

We now estimate the error terms in \eqref{eqn:M-higher-1} and \eqref{eqn:M-higher-2}. We only consider the term 
\begin{equation}
    \den \oint f_{\rm FD}(\beta(z+V_{\rm cut}-\mu)) R_L' f R(W,\tildeW)
\end{equation}
and the other terms in \eqref{eqn:M-higher-1} and \eqref{eqn:M-higher-2} are similar. By Lemma \ref{lem:den-to-schatten}, for any $2 \leq p,q$ and $\frac{1}{p} + \frac{1}{q}= \frac{1}{2}$,
\begin{align}
    & \left\| \den \oint f_{\rm FD}(\beta(z+V_{\rm cut}-\mu)) R_L'f R(W,\tildeW)\right\|_{L^2(\Om)} \notag \\
    & \hspace{0.5cm}\leq  \oint |f_{\rm FD}(\beta(z+V_{\rm cut}-\mu))| \frac{\eps^{-3/2}}{d(z)} \\
        & \hspace{1cm} \times  \| R_L'(1-\eps^2\Delta)\|_{\mathfrak{S}^p(\Om)} \|(1-\eps^2\Delta)^{-1}f\|_{\mathfrak{S}^q(\Om)} \|R(W,\tildeW)(1-\eps^2\Delta) \|_{\mathfrak{S}^\infty(\Om)} .
\end{align}
Since we chose $V - V_{\rm cut} \gtrsim \delta^{1/4}$ (see \eqref{eqn:cut-size} of Theorem \ref{thm:FD-leading-order}), $d(z)$ can be chosen to be of order $\delta^{1/4} \gg \eps$ (see Figure \ref{fig:cauchy-int-contour-general} with $v=V-V_{\rm cut}$ and $\varphi = \phi_0$). By Kato-Seiler-Simon inequality, Lemma \ref{lem:submain-lemma}, and our choice of scaling in Assumption \ref{ass:pname}, it follows that
\begin{align}
    & \left\| \den \oint f_{\rm FD}(\beta(z+V_{\rm cut}-\mu)) R_L' f R(W,\tildeW)\right\|_{L^2(\Om)} \notag \\ 
    &\hspace{0.5cm} \lesssim   \eps^{-3+1/p} \beta^{-1} e^{-\beta(V_{\rm cut} -\mu - \delta^{-1/4})} \| f\|_{L^q(\Om)},
\end{align}
where the extra factor $\beta$ comes from integrating $f_{\rm FD}(\beta(z-V_{\rm cut}-\mu))$ in $z$. Corollary \ref{cor:kappa-0-fixed-scale} shows that \eqref{eqn:M-sc-explicit-form} holds Lemma \ref{lem:mass-sc} is proved.
\end{proof}

Since $M_{\rm REHF}$ is self-adjoint, we have the following unsurprising corollary for the adjoint $M^*_{\rm sc}$ of $M_{\rm sc}$.
\begin{corollary} \label{cor:mass-sc-star}
Let the assumptions of Theorem \ref{thm:eff-eqn} hold. If $f \in H^2(\Om)$, then
\begin{align}
     &\left\| M_{\rm REHF}f - M_{\rm sc}^* f \right\|_{L^2(\Om)} \lesssim \epsilon^{1/2}  m_0^{-C} \|f\|_{H^2(\Om)}
\end{align}
for some constant $C$ and $m_0$ is given in \eqref{eqn:m-0-def}. 
\end{corollary}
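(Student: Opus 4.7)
The plan is to exploit the self-adjointness of $M_{\rm REHF}$ and recycle the expansion from the proof of Lemma \ref{lem:mass-sc} in a dualized form. Since $V$, $V_{\rm cut}$, and $\phi_{0}$ are real, Lemma \ref{lem:M-basic-form} gives $M_{\rm REHF}^{*}=M_{\rm REHF}$, so that $M_{\rm REHF}-M_{\rm sc}^{*}=(M_{\rm REHF}-M_{\rm sc})^{*}$. By $L^{2}(\Om)$ duality, the corollary reduces to establishing the bilinear estimate
\begin{equation*}
\bigl|\ipc{(M_{\rm REHF}-M_{\rm sc})g}{f}_{L^{2}(\Om)}\bigr|\lesssim \epsilon^{1/2}\, m_{0}^{-C}\, \|g\|_{L^{2}(\Om)}\|f\|_{H^{2}(\Om)}
\end{equation*}
uniformly over $g\in L^{2}(\Om)$, where $f\in H^{2}(\Om)$ is the given function and $m_{0}$ is defined in \eqref{eqn:m-0-def}.

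To verify this, I would feed the expansion \eqref{eqn:M-sc-primitive-def}--\eqref{eqn:M-higher-2} from the proof of Lemma \ref{lem:mass-sc}, with $g$ in the slot of the argument, into the pairing with $f$. Using $\ipc{f}{\den A}_{L^{2}(\Om)}=\Tr_{\Om}[f A]$, the leading-order piece $-\den\oint f_{\rm FD}(\beta(z+V_{\rm cut}-\mu))R_{L}(\tildeW)g R_{R}(\tildeW)=M_{\rm sc}g$ contributes exactly $\ipc{g}{M_{\rm sc}^{*}f}$, so only the three error terms survive. A representative error such as $\den\oint f_{\rm FD}(\cdots)R_{L}'g R(W,\tildeW)$ becomes, after cycling the trace per unit volume and inserting $(1-\epsilon^{2}\Delta)(1-\epsilon^{2}\Delta)^{-1}$ next to $g$,
\begin{equation*}
\oint f_{\rm FD}(\cdots)\,\Tr_{\Om}\!\bigl[R_{L}'(1-\epsilon^{2}\Delta)\cdot (1-\epsilon^{2}\Delta)^{-1} g \cdot R(W,\tildeW)\, f\bigr].
\end{equation*}
H\"older's inequality for Schatten norms with exponents $(2,2,\infty)$ then distributes the regularity in the correct way: Lemma \ref{lem:submain-lemma} at $p=2$ controls the first factor, the Kato--Seiler--Simon inequality gives $\|(1-\epsilon^{2}\Delta)^{-1}g\|_{\mathfrak{S}^{2}(\Om)}\lesssim \epsilon^{-3/2}\|g\|_{L^{2}(\Om)}$, and the Sobolev embedding $H^{2}(\Om)\hookrightarrow L^{\infty}(\Om)$ together with the operator bound on the resolvent controls $\|R(W,\tildeW)\, f\|_{\mathfrak{S}^{\infty}(\Om)}$ by $d(z)^{-1}\|f\|_{H^{2}(\Om)}$. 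The other two error terms are handled identically using the $R_R'$ estimate \eqref{eqn:R-approx-1} of Lemma \ref{lem:submain-lemma}.

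The remaining step is the contour integration in $z$, which proceeds verbatim as in the proof of Lemma \ref{lem:mass-sc}: the contour $\Gamma$ is chosen so that $d(z)\approx v_{\max}=\delta^{1/4}$, and Corollary \ref{cor:kappa-0-fixed-scale} converts the surviving $\beta^{-1}e^{-\beta(V_{\rm cut}-\mu-\delta^{1/4})}$ prefactor into the $\epsilon^{1/2}m_{0}^{-C}$ gain. The main obstacle is the asymmetry in how regularity is carried: in Lemma \ref{lem:mass-sc} the $H^{2}$ argument sat in the interior slot of the sandwich that produces $M_{\rm sc}$, whereas here the $H^{2}$ function $f$ is forced outside and the low-regularity $g$ occupies the interior slot. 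Cycling the trace before applying H\"older is precisely what relocates the necessary $(1-\epsilon^{2}\Delta)^{-1}$ factor onto $g$, where it can supply a Schatten-$2$ bound via Kato--Seiler--Simon, while $f$ is absorbed into an operator-norm factor via Sobolev embedding.
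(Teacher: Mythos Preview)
Your approach is correct and reaches the same conclusion, but the paper takes a more direct route. Rather than passing through self-adjointness and duality, the paper simply re-expands $M_{\rm REHF}f=-\den\oint f_{\rm FD}(\cdots)\,R(W,\tildeW)\,f\,R(W,\tildeW)$ with the roles of $R_L$ and $R_R$ \emph{swapped}: it writes $R(W,\tildeW)=R_R(\tildeW)+R_R'$ on the left factor and $R(W,\tildeW)=R_L(\tildeW)+R_L'$ on the right. The leading term $-\den\oint f_{\rm FD}(\cdots)\,R_R(\tildeW)\,f\,R_L(\tildeW)$ is then observed to equal $M_{\rm sc}^{*}f$ (the swap is precisely what produces the adjoint), and the three error terms are estimated exactly as in Lemma~\ref{lem:mass-sc}, with no trace cycling needed.

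Your duality argument buys you the ability to recycle the expansion of Lemma~\ref{lem:mass-sc} verbatim, at the cost of having to rearrange the H\"older split so that the low-regularity $g\in L^2(\Om)$ lands against $(1-\epsilon^2\Delta)^{-1}$ and the $H^2$ function $f$ is absorbed into an operator-norm factor. The paper's route avoids this bookkeeping entirely: since the error terms in the swapped expansion have the same structural form as in Lemma~\ref{lem:mass-sc}, the estimates there apply directly with $f\in H^2(\Om)$ still sitting in the interior slot. Both arguments ultimately rest on the same estimates (Lemma~\ref{lem:submain-lemma}, Kato--Seiler--Simon, and the contour/scaling computation), so the difference is one of packaging rather than substance.
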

\begin{proof}
We will use the notations $R(W,\tildeW), R_R(\tildeW),R_L(\tildeW)$, and $R_{L/R}'$ given in \eqref{eqn:R-def}, \eqref{eqn:RR-def}, \eqref{eqn:RL-def}, and \eqref{eqn:R-prime-RL-def}, respectively. Let $f \in H^1(\Om)$. Instead of expanding $M_{\rm REHF} f$ as in \eqref{eqn:M-sc-primitive-def}, we switch the roll of $R_L$ and $R_R$ :
\begin{align}
    M_{\rm REHF} f = -\den\oint & f_{\rm FD}(\beta(z+V_{\rm cut}-\mu)) R_R(\tildeW)fR_L(\tildeW) \label{eqn:M-sc-primitive-def-star} \\
    &- \den\oint f_{\rm FD}(\beta(z+V_{\rm cut}-\mu))  [R'_R f L(W, \tildeW) \label{eqn:M-higher-1-star} \\
    &\hspace{1cm} + R(W, \tildeW)f R_L' + R'_R) f R'_L)]. \label{eqn:M-higher-2-star}
\end{align}
Since $M_{\rm sc}$ is computed from \eqref{eqn:M-sc-primitive-def}, we note that \eqref{eqn:M-sc-primitive-def-star} is nothing but $M_{\rm sc}^*$. The higher order terms \eqref{eqn:M-higher-1-star} and \eqref{eqn:M-higher-2-star} are dealt with in the same fashion as Lemma \ref{lem:mass-sc}. The proof of the corollary is complete.
\end{proof}

Let us denote
\begin{equation}
    G(x) = x\log\left(\left|\frac{x+1 }{x - 1} \right| \right).
\end{equation}
In this notation,
\begin{equation}
    M_{\rm sc} = \frac{1}{8\pi^2 \eps^{3}} \int_0^\infty f_{\rm FD} (\beta(t+W-\phi_0 + V_{\rm cut}-\mu)) \frac{1}{\sqrt{4t}} G\left( \frac{\sqrt{4t}}{\eps|\nabla|} \right) dt.
\end{equation}
Define
\begin{equation}
    M_0 = \frac{1}{8\pi^2\eps^{3}}   e^{-\beta(W-\phi_0 + V_{\rm cut}-\mu)}\int_0^\infty e^{-\beta t} \frac{1}{\sqrt{4t}} G\left( \frac{\sqrt{4t}}{\eps|\nabla|} \right) dt. 
\end{equation}
Since $f_{\rm FD}(x)$ approaches $e^{-x}$ exponentially fast if $x$ is large, we have the following corollary. 
\begin{corollary} \label{cor:mass-sc-large-T}
Let the assumptions of Theorem \ref{thm:eff-eqn} hold. If $f \in L^2(\Om)$, then
\begin{equation}
     \left\| M_{\rm REHF}f - M_0 f \right\|_{L^2(\Om)}, \left\| Mf - M_0^* f \right\|_{L^2(\Om)} 
     \lesssim \epsilon^{1/2} m_0^{-C}\|f\|_{H^2(\Om)} \label{eqn:M-sc-explicit-form-large-T}
\end{equation}
 for some constant $C$ and $m_0$ is given in \eqref{eqn:m-0-def}.
\end{corollary}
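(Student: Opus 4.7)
The plan is to apply the triangle inequality and reduce to comparing $M_{\rm sc}$ with $M_0$ (and correspondingly their adjoints). Specifically, writing
$$\|M_{\rm REHF}f - M_0 f\|_{L^2} \leq \|M_{\rm REHF}f - M_{\rm sc}f\|_{L^2} + \|M_{\rm sc}f - M_0f\|_{L^2}$$
and similarly for the adjoint version, Lemma \ref{lem:mass-sc} and Corollary \ref{cor:mass-sc-star} already bound the first term on the right by $\eps^{1/2} m_0^{-C}\|f\|_{H^2}$. Hence it suffices to prove $\|M_{\rm sc}f - M_0f\|_{L^2}$ and $\|M_{\rm sc}^*f - M_0^*f\|_{L^2}$ enjoy the same bound, which means I need to quantify how well the Boltzmann factor $e^{-\beta y}$ approximates $f_{\rm FD}(\beta y)$ at low temperature.

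The pointwise identity $f_{\rm FD}(y) - e^{-y} = -e^{-2y}/(1+e^{-y})$ gives $|f_{\rm FD}(y) - e^{-y}| \leq e^{-2y}$ whenever $y \geq 0$. Setting $X(x) := W(x) - \phi_0(x) + V_{\rm cut} - \mu$, Lemma \ref{lem:mu-estimate}, \eqref{eqn:cut-size}, and $\|\phi_0\|_\infty \lesssim \delta \ll V_{\min} - \mu$ together force $X(x) \geq \tfrac{1}{2}(V_{\min} - \mu) > 0$ uniformly, so that $\beta(t+X(x)) \geq 0$ for every $t \geq 0$. Applying this inside the defining integral of $M_{\rm sc} - M_0$ and using Minkowski's integral inequality yields
$$\|M_{\rm sc}f - M_0 f\|_{L^2} \lesssim \frac{e^{-2\beta X_{\min}}}{\eps^3}\, \bigl\|\textstyle\int_0^\infty e^{-2\beta t}(4t)^{-1/2} G(\sqrt{4t}/(\eps|\nabla|)) f \, dt\bigr\|_{L^2}.$$

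The next step is the operator-theoretic estimate of the $t$-integrated Fourier multiplier. A change of variable $s = \sqrt{4t}/(\eps\lambda)$, together with the fact that $G(s) = s\log|(s+1)/(s-1)|$ is bounded for $s$ away from $1$ and has an integrable logarithmic singularity at $s=1$, shows
$$\sup_{\lambda \geq 0}\, \int_0^\infty e^{-2\beta t}(4t)^{-1/2}\, |G(\sqrt{4t}/(\eps\lambda))|^k \, dt \lesssim 1/\sqrt{\beta}, \quad k = 1,2.$$
Using Plancherel together with Cauchy--Schwarz in $t$ (with weight $e^{-2\beta t}(4t)^{-1/2}$), this produces
$\|M_{\rm sc}f - M_0 f\|_{L^2} \lesssim \eps^{-3}\beta^{-1/2} e^{-2\beta X_{\min}}\|f\|_{L^2}$. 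Finally, invoking Corollary \ref{cor:kappa-0-fixed-scale} together with $V_{\max}-V_{\min}=\delta$ and Assumption \ref{ass:temperature} (so that $e^{\beta\delta} \leq m_0^{-C}$), we obtain $e^{-2\beta X_{\min}} \lesssim \eps^6 \beta^3 m_0^{-C}$, hence
$$\|M_{\rm sc}f - M_0 f\|_{L^2} \lesssim \eps^3 \beta^{5/2}\, m_0^{-C}\, \|f\|_{L^2} \lesssim \eps^{1/2} m_0^{-C'}\|f\|_{H^2},$$
since $\eps^3 \beta^{5/2} \ll \eps^{1/2}$ in our regime.

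For the adjoint estimate, the observation is that $M_0 = e^{-\beta X}\cdot P$ where $P$ is a function of $|\nabla|$ (and hence self-adjoint), so $M_0^* = P \cdot e^{-\beta X}$; correspondingly, $M_{\rm sc}^* g (x) = (8\pi^2\eps^3)^{-1}\int_0^\infty (4t)^{-1/2}[G(\sqrt{4t}/(\eps|\nabla|))(g f_{\rm FD}(\beta(t+X)))](x)\,dt$. Exactly the same Minkowski / Cauchy--Schwarz / symbol calculation applies with the multiplication by $e^{-2\beta X}$ now sitting to the right of $G$, producing the same bound. The main difficulty, which this plan overcomes, is that the symbol $G(\sqrt{4t}/(\eps\lambda))$ has a logarithmic singularity at $\eps\lambda = \sqrt{4t}$ and is therefore not an $L^\infty$ function of $\lambda$ pointwise in $t$; one must integrate in $t$ first before passing to operator norm. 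Once this is done, the overall exponential gain $e^{-2\beta X_{\min}}$ (as opposed to the single $e^{-\beta X_{\min}}$ already present in $M_0$) produces the desired factor $\eps^{1/2}$ after accounting for the scales given by Corollary \ref{cor:kappa-0-fixed-scale}.
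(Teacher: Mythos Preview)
Your proposal is correct and follows exactly the approach the paper has in mind: the paper's own justification of this corollary is the single sentence ``Since $f_{\rm FD}(x)$ approaches $e^{-x}$ exponentially fast if $x$ is large, we have the following corollary,'' and you have supplied the details behind that sentence---the pointwise bound $|f_{\rm FD}(y)-e^{-y}|\le e^{-2y}$, the treatment of the logarithmic singularity of $G$ by integrating in $t$ before passing to the multiplier norm, and the conversion of $e^{-2\beta X_{\min}}$ into powers of $\eps$ via Corollary~\ref{cor:kappa-0-fixed-scale}. Your Cauchy--Schwarz-in-$t$ argument together with the estimate $\sup_{\lambda}\int_0^\infty e^{-2\beta t}(4t)^{-1/2}|G(\sqrt{4t}/(\eps\lambda))|^2\,dt\lesssim\beta^{-1/2}$ is the right way to handle the unboundedness of $G(\cdot)$ near $1$, and the final scaling check $\eps^3\beta^{5/2}\ll\eps^{1/2}$ is immediate from Assumption~\ref{ass:temperature}.
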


Now we are ready to prove Theorem \ref{thm:L-lower-bound}.

\begin{proof}[Proof of Theorem \ref{thm:L-lower-bound}]

Let
\begin{equation}
    m =  M_{\rm REHF} - M_0 \label{eqn:m-prime}.
\end{equation}
Lemma \ref{lem:mass-sc} and Corollary \ref{cor:mass-sc-star} show that
\begin{equation}
    \|m f \|_{L^2(\Om)}, \|m^*f \|_{L^2(\Om)} \lesssim \eps^{1/2}  m_0^{-C}  \|f\|_{L^2(\Om)} \label{eqn:m-prime-est}
\end{equation}
for $f \in L^2(\Om)$ where $C$ is a fixed constant. Since $M_{\rm REHF}$ is self-adjoint, by \eqref{eqn:m-prime}, we see that
\begin{align}
     M_{\rm REHF}^2 =&  (M_0^* + m^*)(M_0 + m) \\
        =& M_0^*M_0 + m^* M_0 + M_0^* m + m^* m \label{eqn:mm-bound-1}.
\end{align}
It follows that
\begin{align}
    M_{\rm REHF}^2 =& \frac{1}{64\pi^4\eps^{6}} \left( \int_0^\infty e^{-\beta t} \frac{1}{\sqrt{4t}} G\left( \frac{\sqrt{4t}}{\eps|\nabla|} \right) \right) e^{-2\beta(W-\phi_0 + V_{\rm cut}-\mu)} \left( \int_0^\infty e^{-\beta t} \frac{1}{\sqrt{4t}} G\left( \frac{\sqrt{4t}}{\eps|\nabla|} \right) \right) \notag \\
        &+ m^* M_0 + M_0^* m + m^*m \notag\\
    \gtrsim & \eps^{-6} e^{-2\beta(V_{\max} + \|\phi_0\|_{ \infty}-\mu)} \left( \int_0^\infty e^{-\beta t} \frac{1}{\sqrt{4t}} G\left( \frac{\sqrt{4t}}{\eps|\nabla|} \right) \right)^2 \notag \\
    &+ m^* M_0 + M_0^* m. \notag
\end{align}
By Corollary \ref{cor:kappa-0-fixed-scale}, Lemma \ref{lem:M-basic-form}, and equation \eqref{eqn:m-prime-est}, we see that
\begin{align}
    M_{\rm REHF}^2 \gtrsim&   m_0^{C}  \left( \int_0^\infty e^{-\beta t}\frac{1}{\sqrt{t}} G(\sqrt{t}/\eps \pi |\nabla|) \right)^2 \notag \\
    &- \eps   m_0^{-C} (1-\Delta)^2 \label{eqn:m-2-RHS}
\end{align}
 where $m_0$ is given in \eqref{eqn:m-0-def} and $C$ is a constant.  Since we know that $M_{\rm REHF} \geq 0$, we apply $\max(\cdot, 0)$ to the right hand side of \eqref{eqn:m-2-RHS} (so that we can take its square root). Since the right hand side of \eqref{eqn:m-2-RHS} is purely a function of $|\nabla|$, the usual commutative algebra rules apply. Moreover, since the square-root operator is operator monotone, we conclude that
\begin{align}
    M_{\rm REHF} \gtrsim& m_0^{C}  \int_0^\infty e^{-\beta t} \frac{1}{\sqrt{4t}} G\left( \frac{\sqrt{4t}}{\eps|\nabla|} \right) \notag \\
    &- \eps^{1/2}m_0^{-C} (1-\Delta).  \label{eqn:M-lower-pre-final}
\end{align}
To complete the proof of Theorem \ref{thm:L-lower-bound}, we need the following Lemma, whose proof is delayed until after the current proof.

\begin{lemma} \label{lem:const-m-lower}
\begin{equation}
	\int_0^\infty e^{-t\beta} \frac{1}{\sqrt{4t}} G\left( \frac{\sqrt{4t}}{\eps|\nabla|} \right) dt \geq  \frac{1}{2\sqrt{\beta}(1-\beta \eps^2 \Delta)}. \label{eqn:G-int-est}
\end{equation}
\end{lemma}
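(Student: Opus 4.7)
Since both sides of \eqref{eqn:G-int-est} are spectral multipliers of $-\Delta$ on $L^2(\Omega)$ with periodic boundary conditions, it suffices to verify the inequality mode by mode. Writing $A = \eps|k| \geq 0$ for a generic eigenvalue of $\eps|\nabla|$, and interpreting the integrand at $A = 0$ by the continuous extension \eqref{eqn:0-p-interpretation} (under which the claim becomes $\sqrt{\pi/\beta} \geq 1/(2\sqrt{\beta})$, trivial), the lemma reduces to a scalar inequality for each $A > 0$. After the substitution $s = 2\sqrt{t}/A$ and letting $B = A\sqrt{\beta}$, this takes the one-parameter form
\begin{equation*}
\Phi(B) := B\int_0^\infty e^{-B^2 s^2/4}\,G(s)\,ds \;\geq\; \frac{1}{1+B^2}, \qquad B > 0.
\end{equation*}

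The key pointwise input is the elementary bound $G(s) \geq 2s^2/(1+s^2)$ for all $s \geq 0$, $s\neq 1$ (and trivial at $s=1$ since $G(1)=+\infty$). For $s \in [0,1)$, I write $G(s) = 2s\tanh^{-1}(s)$, reducing matters to $\tanh^{-1}(s) \geq s/(1+s^2)$, which holds by equality at $s=0$ and the nonnegative derivative $[\tanh^{-1}(s) - s/(1+s^2)]' = 4s^2/[(1-s^2)(1+s^2)^2]$. For $s > 1$, I set $L(s) = \log\frac{s+1}{s-1} - 2s/(1+s^2)$, compute $L'(s) = -8s^2/[(s^2-1)(s^2+1)^2] < 0$, and use $L(\infty) = 0$ to conclude $L(s) > 0$.

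Using this bound together with $2s^2/(1+s^2) = 2 - 2/(1+s^2)$, the Gaussian integral $\int_0^\infty e^{-B^2 s^2/4}\,ds = \sqrt{\pi}/B$, and the classical identity $\int_0^\infty e^{-as^2}/(1+s^2)\,ds = (\pi/2)\,e^a\,\mathrm{erfc}(\sqrt{a})$ (which follows from the linear ODE $I'(a) = I(a) - \sqrt{\pi}/(2\sqrt{a})$ with $I(0) = \pi/2$), I obtain the uniform lower bound
\begin{equation*}
\Phi(B) \;\geq\; 2\sqrt{\pi} \;-\; \pi B\,e^{B^2/4}\,\mathrm{erfc}(B/2).
\end{equation*}
Applying the sharp Komatu upper bound $e^{x^2}\mathrm{erfc}(x) \leq 2/[\sqrt{\pi}(x + \sqrt{x^2+4/\pi})]$ at $x = B/2$ and rationalizing via the identity $(8+\pi B^2)^2 - \pi^2 B^2(B^2+16/\pi) = 64$ yields $\Phi(B) \geq 8\sqrt{\pi}/(8+\pi B^2)$, from which the target inequality $8\sqrt{\pi}(1+B^2) \geq 8 + \pi B^2$, i.e.\ $(8\sqrt{\pi}-\pi)B^2 + 8(\sqrt{\pi}-1) \geq 0$, is immediate. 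The main subtlety is that $\pi B\,e^{B^2/4}\mathrm{erfc}(B/2)$ approaches its limit $2\sqrt{\pi}$ from below at rate $O(1/B^2)$ as $B \to \infty$, and crude upper bounds on $\mathrm{erfc}$ (such as $e^{x^2}\mathrm{erfc}(x) \leq 1/(x\sqrt{\pi})$) only give $\Phi(B) \geq 0$; the Komatu refinement is essentially the simplest tool preserving the second-order cancellation necessary to recover the correct $1/(1+B^2)$ decay.
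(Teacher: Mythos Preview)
Your proof is correct, but it takes a genuinely different route from the paper's.

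Both arguments reduce to the scalar inequality at each Fourier mode. From there the paper uses the cruder piecewise bounds $\log\left|\frac{x+1}{x-1}\right| > 2x$ for $x<1$ and $\log\left|\frac{x+1}{x-1}\right| > 2/x$ for $x>1$, splits the $t$-integral at the crossover point, and obtains two incomplete-gamma-type pieces; after the substitution $t\mapsto t/\beta$ the bracket depends only on $\beta p^2$ and the final inequality is checked by direct calculus. Your argument instead uses the single smooth lower bound $G(s)\geq 2s^2/(1+s^2)$, which lets you integrate in closed form to $2\sqrt{\pi}-\pi B\,e^{B^2/4}\mathrm{erfc}(B/2)$; the price is that you then need the sharp Komatu-type upper bound on $e^{x^2}\mathrm{erfc}(x)$ to recover the correct $1/(1+B^2)$ decay, since as you note the naive bound $e^{x^2}\mathrm{erfc}(x)\leq 1/(x\sqrt{\pi})$ only gives $\Phi(B)\geq 0$. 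The paper's approach is more elementary and self-contained; yours is tidier analytically and produces the clean intermediate bound $\Phi(B)\geq 8\sqrt{\pi}/(8+\pi B^2)$, at the cost of importing an external inequality for the complementary error function.
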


Combining equation \eqref{eqn:M-lower-pre-final} and Lemma \ref{lem:const-m-lower}, 
we see that
\begin{align}
 -\Delta + M_{\rm REHF} &\gtrsim -\Delta + m_0^C \frac{1}{\sqrt{\beta}(1-\beta \eps^2 \Delta)} - \epsilon^{1/2}m_0^{-C} (1-\Delta)\\
    &\gtrsim -\Delta + m_0^C  - \eps^{1/2}m_0^{-C}(1-\Delta).
\end{align} 
 By Assumption \ref{ass:pname},  the proof of Theorem \ref{thm:L-lower-bound} is complete, modulo the proof of Lemma \ref{lem:const-m-lower}, which we shall provide in the immediate paragraph that follows.
\end{proof}

\begin{proof}[Proof of Lemma \ref{lem:const-m-lower}]
Let $p = \eps |\nabla|$ and $x = \frac{\sqrt{4t}}{p}$. By our convention, if $p=0^+$, then $G(\infty) = 2$. In this case, \eqref{eqn:G-int-est} is bounded below by 
\begin{equation}
    \int_0^\infty e^{-t\beta} \frac{2}{\sqrt{4t}} dt = \sqrt{\frac{\pi}{\beta}}.
\end{equation}
Otherwise, we assume that $p > 0$. 

By elementary calculus, we have the following two estimates
\begin{align}
	\log\left| \frac{x+1}{x-1} \right| > 2x\  \text{ for }\ x < 1 ,\\
	\log\left| \frac{x+1}{x-1} \right| > \frac{2}{x} \  \text{ for }\  x > 1. 
\end{align}
Equation \eqref{eqn:G-int-est} becomes
\begin{align}
	\int_0^\infty e^{-t\beta} \frac{1}{\sqrt{4t}} G(x) dt \geq & 2\int_0^{p^2} \frac{\sqrt{t}}{p^2} e^{-\beta t} + \int_{p^2}^\infty \frac{1}{\sqrt{t}} e^{-\beta t} \\
 	=& \frac{4}{\beta^{3/2} p^2} \int_0^{\beta p^2} \sqrt{t}e^{-t} + \frac{1}{\sqrt{\beta}} \int_{\beta p^2}^\infty \frac{1}{\sqrt{t}} e^{-t} \\
 	=& \frac{1}{\sqrt{\beta}} \left( \frac{4}{\beta p^2} \int_0^{\beta p^2} \sqrt{t}e^{-t} + \int_{\beta p^2}^\infty \frac{1}{\sqrt{t}} e^{-t} \right). \label{eqn:bracket-term}
\end{align}
We can compute the bracketted term explicitly as a function of $\beta p^2$. Elementary calculus shows that the terms in the bracket in \eqref{eqn:bracket-term} is bounded below by $\frac{1}{2(1+\beta p^2)}$.
\begin{equation}
	\int_0^\infty e^{-t\beta} \frac{1}{\sqrt{t}} G(x) dt \geq   \frac{1}{2\sqrt{\beta}(1+\beta p^2)}.
\end{equation}
This proves \eqref{eqn:G-int-est}.
\end{proof}

\subsection{Proof of Theorem \ref{thm:L-lower-bound-tot}: PL case}

Let $V$ and $\phi = \phi_0$ satisfy the assumption of Theorem \ref{thm:FD-leading-order}. Let 
\begin{equation}
     h  = -\eps^2\Delta + V-\phi_0 -V_{\rm cut}, \label{eqn:h-0-def-PL-case}
\end{equation}
where $V_{\rm cut}$ is given under Theorem \ref{thm:FD-leading-order}. Let also
\begin{equation}
  h_{0}  = u_0^{-1} h u_0 .
\end{equation}
Let $u_0$ denote the Landscape function solving $h u_0 = 1$ and $W_0 = 1/u_0$ its Landscape potential. Define the function, $m_{\rm PL}$, on $\Om$ via
\begin{equation}
    m_{\rm PL} := -\beta  (2\pi\epsilon)^{-3 } \int_{\R^3} dp f'_{\rm FD}(\beta(p^2 + W_0 + V_{\rm cut} - \mu))W_0 \label{eqn:m-PL-def}.
\end{equation}

\begin{theorem} \label{thm:L-lower-bound-PL}
Let the assumptions of Theorem \ref{thm:eff-eqn} hold. The linear operator $L$ given in \eqref{eqn:linear-operator} with $F=F_{\rm PL}$ (see \eqref{eqn:F-LSC-def}) is
\begin{equation}
    L = L_{\rm PL} := -\Delta + m_{\rm PL} h_{0}^{-1} \label{eqn:L-PL-form},
\end{equation}
where $m_{\rm PL}, W_0$, and $u_0$ are seen as multiplication operators. Moreover,
\begin{equation}
    \|L_{\rm PL} f\|_{L^2(\Omega)} \gtrsim \|(-\Delta + m_0^C)f\|_{L^2(\Omega)} \label{eqn:L-lower-bound-PL}
\end{equation}
for some constant $C$ where $m_0$ is given in \eqref{eqn:m-0-def}.
\end{theorem}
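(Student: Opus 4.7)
The plan is to first derive the claimed form $L_{\rm PL} = -\Delta + m_{\rm PL} h_0^{-1}$ from the chain rule, and then establish the lower bound by showing that $M_{\rm PL} := m_{\rm PL} h_0^{-1}$ agrees, up to a semi-classically small error, with a self-adjoint operator to which the strategy of Theorem \ref{thm:L-lower-bound} applies.

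For the form, I would differentiate $F_{\rm PL}(\phi,\mu)$ in $\phi$. Writing $u_1 = h_\phi^{-1}1$ with $h_\phi = -\eps^2\Delta + V - \phi - V_{\rm cut}$, differentiation of the identity $h_\phi u_1 = 1$ at $\phi = \phi_0$ gives $d_\phi u_1 \cdot \varphi = h^{-1}(\varphi u_0)$, and hence $d_\phi W_1 \cdot \varphi = -W_0^2 h^{-1}(\varphi u_0)$. Using the similarity $h^{-1} = u_0 h_0^{-1} u_0^{-1}$ together with the fact that multiplication operators mutually commute collapses this to $d_\phi W_1 \cdot \varphi = -W_0 h_0^{-1}\varphi$. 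Substituting into the $p$-integral in \eqref{eqn:F-PL-def} and recognizing \eqref{eqn:m-PL-def} yields $M_{\rm PL} = m_{\rm PL} h_0^{-1}$, which is \eqref{eqn:L-PL-form}.

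For the lower bound, the plan is to exhibit a self-adjoint ``leading order'' operator $M_{\rm sc}^{\rm PL}$, built out of $W_0$ in the style of Lemma \ref{lem:L0-explicit-compute}, such that $M_{\rm PL}$ agrees with $M_{\rm sc}^{\rm PL}$ up to an $H^2\!\to\!L^2$ remainder of size $\eps^{1/2}m_0^{-C}$. I would establish this closeness by rewriting $h_0^{-1} = u_0 h^{-1} u_0^{-1}$, inserting the Cauchy representation of $h^{-1}$ along the contour of Figure \ref{fig:cauchy-int-contour-general}, and then running the resolvent expansion of Lemma \ref{lem:submain-lemma}. The drift term $-2\eps^2 u_0^{-1}\nabla u_0\cdot\nabla$ in $h_0$ carries the small factor $\eps\nabla W_0$, whose $L^p$ norms are controlled by Theorem \ref{thm:useful-L-p-est-on-DW}, and hence contributes only to the semi-classical remainder. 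Parallel bounds for the adjoint (analogous to Corollary \ref{cor:mass-sc-star}) are needed since $M_{\rm PL}$ is not self-adjoint.

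With the closeness in hand, the proof of Theorem \ref{thm:L-lower-bound} carries over almost verbatim: decomposing $M_{\rm PL} = M_0 + m'$ with $M_0$ the self-adjoint leading order and $\|m' f\|_{L^2}, \|(m')^* f\|_{L^2} \lesssim \eps^{1/2}m_0^{-C}\|f\|_{H^2}$, one computes $L_{\rm PL}^* L_{\rm PL}$, extracts the positive block $(M_0 + m')^*(M_0 + m')$, absorbs cross terms via operator monotonicity of $\sqrt{\,\cdot\,}$ applied to the commuting Fourier multiplier $M_0$, and closes with Lemma \ref{lem:const-m-lower}. This produces $\|L_{\rm PL} f\|_{L^2} \gtrsim \|(-\Delta + m_0^{C_1})f\|_{L^2} - \eps^{1/2}m_0^{-C_2}\|f\|_{H^2}$; the error is then absorbed because $\|f\|_{H^2}\lesssim m_0^{-C_1}\|(-\Delta+m_0^{C_1})f\|_{L^2}$ and Assumption \ref{ass:pname} with $m_0 = \eps^{\delta^{1/4}}$ force $\eps^{1/2} m_0^{-(C_1+C_2)}\ll 1$. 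The principal obstacle is the non-self-adjointness of $M_{\rm PL}$ caused by the drift in $h_0$: this forces the closeness estimate to be established simultaneously for $M_{\rm PL}$ and its adjoint, and requires careful use of the full $L^p$ bounds on $\nabla W_0$ and $\Delta W_0$ from Theorem \ref{thm:useful-L-p-est-on-DW} to keep the remainder within the claimed $\eps^{1/2}$.
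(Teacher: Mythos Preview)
Your derivation of the form \eqref{eqn:L-PL-form} via the chain rule is correct and matches the paper's computation. However, your strategy for the lower bound diverges from the paper's and contains a gap.

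The paper does not attempt to build a semi-classical leading order for $M_{\rm PL}$ via Cauchy integrals or Lemma~\ref{lem:submain-lemma}; it works directly with the explicit expression $M_{\rm PL} = m_{\rm PL} h_0^{-1}$. The anti-self-adjoint part is precisely the commutator
\[
2M_{\rm PL}' = M_{\rm PL} - M_{\rm PL}^* = [m_{\rm PL}, h_0^{-1}] = -h_0^{-1} u_0^{-1}\bigl(-\eps^2\Delta m_{\rm PL} - 2(\eps\nabla m_{\rm PL})\cdot\eps\nabla\bigr) u_0 h_0^{-1},
\]
which is bounded in $H^1\to L^2$ by $m_0^{-C}\eps^{1/4}$ because $\nabla m_{\rm PL}$ and $\Delta m_{\rm PL}$ inherit the smallness of $\nabla W_0$ and $\Delta W_0$ from Theorem~\ref{thm:useful-L-p-est-on-DW}. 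For the self-adjoint part, the paper lower bounds $M_{\rm PL}^* M_{\rm PL} = u_0 h^{-1} u_0^{-1} m_{\rm PL}^2 u_0^{-1} h^{-1} u_0$ directly: the pointwise estimate $m_{\rm PL} \gtrsim m_0^C$ from Corollary~\ref{cor:kappa-0-fixed-scale} combined with $h \leq v_{\max} - \eps^2\Delta$ yields $M_{\rm PL}^* M_{\rm PL} \gtrsim m_0^{2C} u_0(v_{\max}-\eps^2\Delta)^{-2}u_0$, and then $u_0$ is commuted through at the cost of another $\eps^{1/4}$ error. No Cauchy representation, no analogue of $M_{\rm sc}$, and no appeal to Lemma~\ref{lem:const-m-lower}.

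Your plan to ``exhibit a self-adjoint leading order $M_{\rm sc}^{\rm PL}$ in the style of Lemma~\ref{lem:L0-explicit-compute}'' and then close via Lemma~\ref{lem:const-m-lower} is where the gap lies. Lemma~\ref{lem:L0-explicit-compute} produces a Fourier multiplier only because $M_{\rm REHF}$ has the density-of-double-resolvent structure $\den[(z-h)^{-1}\varphi(z-h)^{-1}]$, whose leading term after the $R_L/R_R$ expansion collapses to an explicit function of $|\nabla|$ alone. The single-resolvent operator $m_{\rm PL} h_0^{-1}$ has no such structure: dropping the drift in $h_0$ leaves $m_{\rm PL}(W_0 - \eps^2\Delta)^{-1}$, which is still not a Fourier multiplier (since $m_{\rm PL}$ and $W_0$ are nonconstant functions of $x$), so neither the ``commuting Fourier multiplier $M_0$'' nor the square-root monotonicity step nor Lemma~\ref{lem:const-m-lower} applies as you describe. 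The paper sidesteps this entirely by observing that $M_{\rm PL}^* M_{\rm PL}$ already has a tractable sandwich form, with the non-self-adjointness isolated as a single elementary commutator rather than extracted through a resolvent expansion.
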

\begin{proof}

We differentiate $F_{\rm PL}$ from \eqref{eqn:F-PL-def} to obtain
\begin{align}
    &\hspace{-1cm} d_\phi F_{\rm PL}\mid_{\phi = \phi_0} \varphi \\
    =& -\beta \epsilon^{-3} (2\pi)^{-3/2} \int_{\R^3} dp f'_{\rm FD}(\beta(p^2 + W_0 + V_{\rm cut} - \mu)) W_0^2 d_\phi [(h - \phi)^{-1} 1] \mid_{\phi = 0} \varphi  \\
    =& m_{\rm PL}u_0^{-1} h^{-1} \varphi h^{-1} 1  \\
    =& m_{\rm PL} h_{0}^{-1} \varphi. \label{eqn:M-PL-explicit-form}
\end{align}
This proves \eqref{eqn:L-PL-form}.

Let $M_{\rm PL} = m_{\rm PL}h_{0}^{-1}$. We note that $L=L_{\rm PL}:= -\Delta+M_{\rm PL}$. Since $M_{\rm PL}$ is not self-adjoint, $L_{\rm PL}$ is not self-adjoint. However, $L_{\rm PL}$ is almost self-adjoint as described below. We rewrite
\begin{align}
    L_{\rm PL} =& \left(-\Delta + \frac{1}{2}(M_{\rm PL}+M_{\rm PL}^*)\right) + \frac{1}{2}(M_{\rm PL}-M_{\rm PL}^*) \\
    =&: L_1 + M_{\rm PL}',
\end{align}
where $L_1$ is self-adjoint. We show that $M_{\rm PL}'$ is small. Indeed, we compute
\begin{align}
    2M_{\rm PL}' =& [m_{\rm PL}, h_0^{-1}] \\
    =& -h_0^{-1}[m_{\rm PL}, h_{0}] h_{0}^{-1}  \\
    =& -h_{0}^{-1}u_0^{-1} [m_{\rm PL}, -\eps^2 \Delta] u_0 h_{0}^{-1} \\
    =& -h_{0}^{-1}u_0^{-1} \Big(-\eps^2 \Delta m_{\rm PL} - 2(\eps \nabla m_{\rm PL}) \cdot (\eps \nabla) \Big) u_0 h_{0}^{-1}.
\end{align}
Recalling that $h u_0=1$ where $h$ is given in \eqref{eqn:h-0-def-PL-case}, $\|h^{-1}\|_\infty=\|u_0\|_\infty$ is bounded above by
\begin{equation}
    \frac{1}{\inf V -\|\phi_0\|_{\infty}-V_{\rm cut}} = O(\delta^{-1/4}),
\end{equation}
and
$\|u^{-1}_0\|_\infty$ is bounded above by
\begin{equation}
    \sup V +\|\phi_0\|_{\infty} -V_{\rm cut} = O(\delta^{1/4}).
\end{equation}
 Thus, we see that $\|h_{0}^{-1}u_0^{-1}\|_\infty\lesssim 1$ and $\|u_0h_{0}^{-1}\|_\infty\lesssim \delta^{-1/2}$. Hence,  
\begin{align}
    \|M_{\rm PL}'f\|_{ 2 } \lesssim& \,    \|(\eps^2 \Delta m_{\rm PL} +2(\eps \nabla m_{\rm PL}(\eps \nabla)) u_0 h_{0}^{-1}f \|_{ 2 } \notag\\
    \lesssim&\,    \delta^{-1/2}\|(-\eps^2 \Delta m_{\rm PL}) \|_{ 4 }\, \| f\|_{ 4 }   +  \|(\eps \nabla m_{\rm PL}) \|_{ 4 } \|(\eps \nabla)u_0 h_{0}^{-1}f \|_{ 4 }.
\end{align}
It follows by the Sobolev inequality, Theorem \ref{thm:useful-L-p-est-on-DW}, Corollary \ref{cor:kappa-0-fixed-scale}, and definition \eqref{eqn:m-0-def} that 
\begin{equation}
    \|M_{\rm PL}'f\|_{ 2 } \lesssim m_0^{-C} \eps^{1/4}\|f\|_{H^1(\Om)}. \label{eqn:M'-corrected-upper-bound}
\end{equation}

Next, we provide a lower bound for $\frac{1}{4}(M_{\rm PL}+M_{\rm PL}^*)^2$. We compute
\begin{align}
    \left( \frac{M_{\rm PL}+M_{\rm PL}^*}{2} \right)^2 =& (M_{\rm PL}^* + M_{\rm PL}')(M_{\rm PL} - M_{\rm PL}') \\
    =& M_{\rm PL}^*M_{\rm PL} + M_{\rm PL}'M_{\rm PL} - M_{\rm PL}^* M_{\rm PL}' - (M_{\rm PL}')^2. \label{eqn:M-plus-M-star-expand}
\end{align}
 We denote $v_{\max} = \sup V +\|\phi_0\|_{\infty} -V_{\rm cut} = O(\delta^{1/4})$. Using the explicit form of $M_{\rm PL}$ in \eqref{eqn:M-PL-explicit-form}, we see that
\begin{align}
    M_{\rm PL}^*M_{\rm PL} =& \,  u_0 h^{-1} u_0^{-1} m_{\rm PL}^2 u_0^{-1} h^{-1} u_0 \\
    \gtrsim & \,  \beta^2 \eps^{-6} e^{-2\beta(V_{\rm cut} - \mu + C\delta^{1/4})} {u_0(v_{\max}-\eps^2\, \Delta)^{-2}u_0} \\
    \gtrsim &\,  m_0^{2C} u_0(v_{\max}-\eps^2\, \Delta)^{-2}u_0, \label{eqn:u-0-double}
\end{align}
where $m_0$ is given in \eqref{eqn:m-0-def}, $C$ is a constant. Note that the last line follows from the scaling Assumptions \ref{ass:semi-classical} -- \ref{ass:temperature} and Corollary \ref{cor:kappa-0-fixed-scale}.  {To estimate \eqref{eqn:u-0-double}, we have that
\begin{equation}
    (v_{\max}-\eps^2\, \Delta)^{-1}u_0 = u_0(v_{\max}-\eps^2\, \Delta)^{-1} + (v_{\max}-\eps^2\, \Delta)^{-1}[u_0, -\eps^2\Delta] (v_{\max}-\eps^2\, \Delta)^{-1}.
\end{equation}
It follows from \eqref{eqn:u-0-double}, Theorem \ref{thm:useful-L-p-est-on-DW}, and Sobolev's inequality (which depends on $|\Om|$) that for any $f \in L^2(\Om)$,
\begin{align}
    \lan f, M^*Mf \ran \gtrsim & \,  m_0^{2C}\|(v_{\max}-\eps^2\, \Delta)^{-1}u_0 f\|_{2 } \\
    \gtrsim& \,  m_0^{2C}\|u_0(v_{\max}-\eps^2\, \Delta)^{-1} f\|_{ 2 } - m_0^{-C'}\eps^{1/4}\|f\|_{ 2 } \\
    \gtrsim& \,  m_0^{2C}\|(v_{\max}-\eps^2\, \Delta)^{-1} f\|_{ 2 }
\end{align}
for suitable and possibly different constants $C$ and $C'$.} Together with \eqref{eqn:M'-corrected-upper-bound}, \eqref{eqn:L-lower-bound-PL} is proved by \eqref{eqn:M-plus-M-star-expand}.
\end{proof}

\subsection{Proof of Theorem \ref{thm:L-lower-bound-tot}: LSC case}
Similar to the PL case, let $V$ and $\phi = \phi_0$ satisfy the assumption of Theorem \ref{thm:FD-leading-order}. Let $u_0$ denote the Landscape function associated to the Hamiltonian
\begin{equation}
    -\eps^2\Delta + V - V_{\rm cut},
\end{equation}
where $V_{\rm cut}$ is given under Theorem \ref{thm:FD-leading-order}. 
Define the function, $m_{\rm LSC}$, on $\Om$ via
\begin{equation}
    m_{\rm LSC} := -\beta   (2\pi\epsilon)^{-3/2} \int_{\R^3} dp f'_{\rm FD}(\beta(p^2 + W_0 - \phi_0 + V_{\rm cut} - \mu)) \label{eqn:m-LSC-def},
\end{equation}
where $W_0=1/u_0$ is the Landscape potential. 

\begin{theorem} \label{thm:L-lower-bound-LSC}
Let the assumptions of Theorem \ref{thm:eff-eqn} hold. The linear operator $L$ given in \eqref{eqn:linear-operator} with $F= F_{\rm LSC}$ (see \eqref{eqn:F-LSC-def}) is
\begin{equation}
    L = -\Delta+m_{\rm LSC},
\end{equation}
where $m_{\rm LSC}$ is seen as a multiplication operator. Moreover, $L$ is self-adjoint on $L^2(\Om)$ and 
\begin{equation}
    L \gtrsim -\Delta + m_0^C \label{eqn:L-lower-bound-LSC}
\end{equation}
for some constant $C$ where $m_0$ is given in \eqref{eqn:m-0-def}.
\end{theorem}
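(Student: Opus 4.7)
The plan is to exploit the structural simplicity of the LSC case: in $F_{\rm LSC}$ the Landscape potential $W_2 = 1/u_2$ is determined by the $\phi$-independent equation \eqref{eqn:shifted-Landscape-eqn-2}, so the only $\phi$-dependence in the integrand of \eqref{eqn:F-LSC-def} enters as a pure additive shift inside $f_{\rm FD}$. Differentiating under the integral sign will then immediately produce $d_\phi F_{\rm LSC}\bigm|_{\phi = \phi_0}\varphi = m_{\rm LSC}\,\varphi$ with $m_{\rm LSC}$ the multiplication operator \eqref{eqn:m-LSC-def}. Because $-f_{\rm FD}' > 0$ and the $p$-integrand decays as a Gaussian, $m_{\rm LSC}$ is a nonnegative bounded real-valued function on $\Om$, so $L = -\Delta + m_{\rm LSC}$ is a periodic Schr\"odinger-type operator, self-adjoint on $L^2(\Om)$ with operator domain $H^2(\Om)$.

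The substance of the argument is the pointwise lower bound $m_{\rm LSC}(x) \gtrsim m_0^C$; integrated against $|f|^2$ and combined with $\langle f, -\Delta f\rangle \geq 0$ this immediately upgrades to the claimed form inequality $L \gtrsim -\Delta + m_0^C$. First I would control $W_0$ pointwise via the weak maximum principle for \eqref{eqn:shifted-Landscape-eqn-2}, using $u_0 \in H^2(\Om) \subset C^0(\Om)$ in $d = 3$ together with the comparison functions $u_0 - 1/(V_{\max} - V_{\rm cut})$ and $u_0 - 1/(V_{\min} - V_{\rm cut})$, obtaining $V_{\min} \leq W_0 + V_{\rm cut} \leq V_{\max}$ pointwise. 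Combined with $\|\phi_0\|_{L^\infty(\Om)} \lesssim \delta$ (from Sobolev embedding) and Lemma \ref{lem:mu-estimate}, the Fermi--Dirac argument $\beta(p^2 + W_0 - \phi_0 + V_{\rm cut} - \mu)$ is uniformly positive and $O(1)$-bounded below, so $-f_{\rm FD}'(s) \gtrsim e^{-s}$ on the relevant range.

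A direct Gaussian integration in $p$ will then give, pointwise in $x$,
\begin{equation*}
    m_{\rm LSC}(x) \gtrsim \beta\,\eps^{-3}\beta^{-3/2}\, e^{-\beta(W_0(x) - \phi_0(x) + V_{\rm cut} - \mu)} \gtrsim \beta\,\eps^{-3}\beta^{-3/2}\, e^{-\beta(V_{\max} - \mu + C\delta)}.
\end{equation*}
To close, I would write $V_{\max} - \mu + C\delta = (V_{\min} - \mu - C\delta) + O(\delta)$ and invoke the lower half of Corollary \ref{cor:kappa-0-fixed-scale}, which yields $\eps^{-3}\beta^{-3/2}\, e^{-\beta(V_{\max} - \mu + C\delta)} \gtrsim e^{-C\beta\delta}$. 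The upper temperature bound $\beta \leq V_{\min}^{-1}\log(\eps^{-3})$ from Assumption \ref{ass:temperature} then converts $e^{-C\beta\delta}$ into $\eps^{C\delta/V_{\min}}$, and the scaling $\delta \ll V_{\min} \leq 1$ from Assumption \ref{ass:pname} forces $\delta/V_{\min} \lesssim \delta^{1/4}$, so $m_{\rm LSC} \gtrsim \eps^{C\delta^{1/4}} = m_0^C$.

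The only mildly technical ingredient is the pointwise maximum principle for the Landscape function with a piecewise constant potential, but this is standard given the $H^2 \subset C^0$ regularity of $u_0$ and the strict positivity of $V - V_{\rm cut}$. Everything else is direct bookkeeping inside the low-temperature/semi-classical scaling already exploited in the REHF and PL cases; the LSC case is strictly easier because $L$ is a genuine multiplicative Schr\"odinger operator, so no symmetrization, conjugation, or resolvent-expansion arguments of the kind needed for $L_{\rm REHF}$ and $L_{\rm PL}$ are required.
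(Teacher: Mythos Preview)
Your approach is essentially the paper's: differentiate $F_{\rm LSC}$ to get the multiplication operator $m_{\rm LSC}$, bound $-f'_{\rm FD}$ below by the exponential, perform the Gaussian $p$-integral, and then combine Corollary~\ref{cor:kappa-0-fixed-scale} with the temperature assumption to reach $m_{\rm LSC}\gtrsim m_0^{C}$. The maximum-principle justification of $V_{\min}\le W_0+V_{\rm cut}\le V_{\max}$ is a nice explicit detail the paper leaves implicit.

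One correction is needed in your final step. Assumption~\ref{ass:temperature} reads $K<\log(\eps^{-3})/\beta<V_{\min}$; the inequality $\log(\eps^{-3})/\beta<V_{\min}$ rearranges to $\beta>V_{\min}^{-1}\log(\eps^{-3})$, which is a \emph{lower} bound on $\beta$, not the upper bound you claim. The upper bound you actually need comes from the left inequality, $\beta<K^{-1}\log(\eps^{-3})$. Since $K>0$ is a fixed constant independent of $\eps,\delta$, this still gives $e^{-C\beta\delta}\ge \eps^{C'\delta}$, and then $\delta\lesssim\delta^{1/4}$ (because $\delta\ll 1$) yields $m_{\rm LSC}\gtrsim m_0^{C}$; your detour through $\delta/V_{\min}\lesssim\delta^{1/4}$ is unnecessary once the correct bound on $\beta$ is used.
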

\begin{proof}
We differentiate $ F_{\rm LSC}$ from \eqref{eqn:F-LSC-def} to obtain
\begin{align}
  d_\phi  F_{\rm LSC}\mid_{\phi = \phi_0} \phi = &  -\beta \epsilon^{-3} (2\pi)^{-3/2} \int_{\R^3} dp f'_{\rm FD}(\beta(p^2 + W_0-\phi_0  + V_{\rm cut} - \mu)) \phi \\
  =& m_{\rm LSC} \phi,   
\end{align}
where $m_{\rm LSC}$ is given in \eqref{eqn:m-LSC-def}. By assumptions \ref{ass:semi-classical} - \ref{ass:temperature}, we may replace $f'_{\rm RD}(x)$ by $e^{-x}$, and we obtain
\begin{equation}
    m_{\rm LSC} \gtrsim  \beta \epsilon^{-3} e^{-\beta( W_0+ \|\phi_0\|_{ \infty} + V_{\rm cut} - \mu)} \int_{\R^3} dp e^{-\beta p^2}  
    \gtrsim  \beta^{-1/2} \eps^{-3} e^{-\beta( V_{\max} + \|\phi_0\|_{ \infty} -\mu)}. 
\end{equation}
By Corollary \ref{cor:kappa-0-fixed-scale}, Assumptions \ref{ass:semi-classical} - \ref{ass:temperature}, we see that $m_{\rm LSC} \gtrsim m_0^C$ for some constant $C$. This proves \eqref{eqn:L-lower-bound-LSC} and completes the proof of Theorem \ref{thm:L-lower-bound-LSC}.
\end{proof}

\section{Nonlinear Analysis} \label{sec:nonlin-anal}
In this section we prove Theorem \ref{thm:nonlin-tot} in three parts: in each of the following subsections, we prove a version of Theorem \ref{thm:nonlin-tot} for the case of REHF, PL, and LSC in Theorems \ref{thm:nonlin}, \ref{thm:nonlin-PL-est}, and \ref{thm:nonlin-LSC-est}, respectively.

\subsection{Proof of Theorem \ref{thm:nonlin-tot}: REHF case}

\begin{theorem} \label{thm:nonlin}
Let the assumptions of Theorem \ref{thm:eff-eqn} hold. The nonlinear operator $N$ (defined in \eqref{eqn:N-def}) of the REHF equation has the following estimate
\begin{equation}
	\|N(\phi_1) - N(\phi_2)\|_{L^2(\Om)} \leq C_1 m_0^{-C_2} (\|\phi_1\|_{H^1(\Om)} + \|\phi_2\|_{H^1(\Om)}) \|\phi_1 - \phi_2\|_{L^2(\Om)} \label{eqn:nonlin}
\end{equation}
for $\phi_1$ and $\phi_2$ in $H^1(\Om)$  provided that $\|\phi_i\|_{H^1(\Om)} \lesssim m_0^{C_3}$ for some constant $C_3$ large enough, where $m_0$ is given in \eqref{eqn:m-0-def}.
\end{theorem}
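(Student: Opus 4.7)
The plan is to write $N(\varphi)$ explicitly as a contour integral, telescope the difference $N(\phi_1) - N(\phi_2)$ into three terms each linear in $\phi_1-\phi_2$, and bound each term using Lemma \ref{lem:den-to-schatten} together with Schatten-norm H\"older, the Kato-Seiler-Simon inequality, and the Sobolev embedding $H^1(\Om)\hookrightarrow L^6(\Om)$.

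First, recall $N(\varphi)=F_{\rm REHF}(\phi_0,\mu)-F_{\rm REHF}(\phi_0+\varphi,\mu)+M_{\rm REHF}\varphi$, and use the contour representation
\begin{equation*}
F_{\rm REHF}(\phi,\mu)=\den\oint f_{\rm FD}(\beta(z+V_{\rm cut}-\mu))(z-h_\phi)^{-1},\qquad h_\phi=-\eps^2\Delta+V-V_{\rm cut}-\phi.
\end{equation*}
Applying the resolvent identity twice gives the closed form
\begin{equation*}
N(\varphi)=-\den\oint f_{\rm FD}(\beta(z+V_{\rm cut}-\mu))\,R_0\,\varphi\,R_0\,\varphi\,R_\varphi,
\end{equation*}
where $R_0=(z-h_{\phi_0})^{-1}$ and $R_\varphi=(z-h_{\phi_0+\varphi})^{-1}$. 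This explicit quadratic-plus-higher-order structure in $\varphi$ is what produces the prefactor $\|\phi_1\|_{H^1}+\|\phi_2\|_{H^1}$.

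Next, I would telescope the integrand of $N(\phi_1)-N(\phi_2)$ as
\begin{align*}
R_0\phi_1 R_0\phi_1 R_{\phi_1}-R_0\phi_2 R_0\phi_2 R_{\phi_2}
=\ & R_0(\phi_1-\phi_2)R_0\phi_1 R_{\phi_1}+R_0\phi_2 R_0(\phi_1-\phi_2)R_{\phi_1} \\
& -R_0\phi_2 R_0\phi_2 R_{\phi_1}(\phi_1-\phi_2)R_{\phi_2},
\end{align*}
using $R_{\phi_1}-R_{\phi_2}=-R_{\phi_1}(\phi_1-\phi_2)R_{\phi_2}$ for the last term. Each of the three pieces carries exactly one factor of $\phi_1-\phi_2$ and at most two factors of $\phi_i$, so once we bound the operator-valued expressions in an appropriate Schatten norm the desired Lipschitz estimate falls out.

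For each of the three terms I apply Lemma \ref{lem:den-to-schatten} to convert the $L^2(\Om)$ norm of $\den(\cdot)$ into $\mathfrak{S}^2(\Om)$, at the cost of a factor $\eps^{-3/2}$ and an extra $(1-\eps^2\Delta)$. Then a Schatten H\"older with exponents $p,q\ge 2$, $1/p+1/q=1/2$, distributes the norm across a multiplier and a resolvent, and the Kato-Seiler-Simon inequality combined with the bound $\|(1-\eps^2\Delta)R_0\|_{\mathfrak{S}^\infty(\Om)}\lesssim 1+1/d(z)$ from \eqref{eqn:change-of-resolvent} absorbs each ``resolvent times multiplier'' factor, producing $\|\phi_i\|_{L^p(\Om)}\lesssim \|\phi_i\|_{H^1(\Om)}$ via Sobolev. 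Finally, the contour $\Gamma$ is chosen exactly as in Lemma \ref{lem:M-basic-form} so that $d(z)\gtrsim\delta^{1/4}$; the resulting $z$-integral of $f_{\rm FD}(\beta(z+V_{\rm cut}-\mu))$ is then bounded by $C\beta^{-1}e^{-\beta(V_{\rm cut}-\mu-\delta^{1/4})}$, which by Corollary \ref{cor:kappa-0-fixed-scale} absorbs the negative powers of $\eps$, $\beta$, and $\delta$ into a single factor $m_0^{-C_2}$.

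The main obstacle will be the perturbed resolvent $R_{\phi_i}$: the potential $\phi_i$ lies only in $H^1(\Om)\hookrightarrow L^6(\Om)$, not in $L^\infty(\Om)$, so $h_{\phi_0+\phi_i}$ is not a uniformly bounded perturbation of $h_{\phi_0}$. The hypothesis $\|\phi_i\|_{H^1(\Om)}\lesssim m_0^{C_3}$ is used exactly here: choosing $C_3$ sufficiently large makes the formal Neumann series $R_{\phi_i}=\sum_n(R_0\phi_i)^n R_0$ converge in a suitable Schatten sense. Concretely, one writes $R_0\phi_i=R_0(1-\eps^2\Delta)\cdot(1-\eps^2\Delta)^{-1}\phi_i$ and estimates the second factor by duality and Sobolev (controlling $\|(1-\eps^2\Delta)^{-1}\phi_i\|_{\mathfrak{S}^\infty}$ by $\eps^{-a}\|\phi_i\|_{H^1}$ for some $a$), so that the smallness of $\|\phi_i\|_{H^1}$ relative to a power of $m_0$ beats the negative powers $d(z)^{-1}\lesssim\delta^{-1/4}$ and $\eps^{-a}$ appearing on the other side. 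Once $R_{\phi_i}$ is controlled in this way, each factor of $R_{\phi_i}$ in terms I--III can be replaced by $R_0$ plus an absolutely convergent correction, reducing the entire bound to Schatten estimates that involve only $R_0$ and multiplication operators of the form already handled in Lemma \ref{lem:mass-sc} and Corollary \ref{cor:mass-sc-star}.
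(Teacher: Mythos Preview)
Your proposal is correct and follows essentially the same route as the paper: both start from the explicit contour formula $N(\varphi)=-\den\oint f_{\rm FD}(\beta(z+V_{\rm cut}-\mu))\,R_0\varphi R_0\varphi R_\varphi$, reduce to Schatten estimates via Lemma \ref{lem:den-to-schatten} and Kato--Seiler--Simon, use Sobolev to pass from $L^p$ to $H^1$, and absorb the $\eps,\beta,\delta$ factors into $m_0^{-C_2}$ through Corollary \ref{cor:kappa-0-fixed-scale}. The only organizational difference is that the paper first expands $R_\varphi$ as a full Neumann series, writing $N(\phi)=\sum_{n\ge 2}N_n(\phi)$ with $N_n(\phi)=(-1)^n\den\oint f_{\rm FD}\,(z-h)^{-1}(\phi(z-h)^{-1})^n$, and then telescopes each $N_n(\phi_1)-N_n(\phi_2)$ via $a^n-b^n=\sum_k b^k(a-b)a^{n-1-k}$; you instead telescope into three terms first and deal with $R_{\phi_i}$ afterwards by the same Neumann expansion. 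These are equivalent rearrangements of the same sum, and your explicit flagging of the $H^1\not\hookrightarrow L^\infty$ issue (and its resolution through the smallness hypothesis $\|\phi_i\|_{H^1}\lesssim m_0^{C_3}$) is exactly the mechanism the paper uses to sum its series.
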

\begin{proof}[Proof of Theorem \ref{thm:nonlin}]
By \eqref{eqn:N-def}, we see that
 {\begin{equation}
    N(\phi) =- \den f_{\rm FD}(\beta(h - \phi - \mu))+ \den f_{\rm FD}(\beta(h - \mu)) +M\phi ,
\end{equation} }
where 
\begin{equation}
    h = -\eps^2 \Delta + V - \phi_0 - V_{\rm cut} \label{eqn:nonlin-h-def}
\end{equation}
(recall from Theorem \ref{thm:FD-leading-order} and \eqref{eqn:cut-size} our choice of $V_{\rm cut}$).  We remark that since $\phi$ is not necessarily real, the operator $h$ is not self-adjoint in general. However, its spectrum lies within a tubular neighborhood of the real with with width $O(\|\phi\|_\infty) \lesssim O(\|\phi\|_{H^2}) \lesssim m_0^{C_3} = \eps^{C_3\delta^{1/4}}$ for some constant $C_3$, by Assumption of Theorem \ref{thm:nonlin}. In particular, the spectrum of $h-\phi$ does not intersect our contour of integration since the poles of $f_{\rm FD}(\beta(z+V_{\rm cut} - \mu))$ are $-(V_{\rm cut} - \mu)+i\pi\beta^{-1}\Z$ (see Figure \ref{fig:cauchy-int-contour-general}).   Thus, recall that the resolvent identity is
\begin{equation}
	(z-A)^{-1} - (z-B)^{-1} = (z-A)^{-1}(A-B)(z-B)^{-1}. \label{eqn:resolventID}
\end{equation}
Using the Cauchy-integral, the resolvent identity, and \eqref{eqn:M-def}, we arrive at an explicit formula for for $N$:
\begin{equation}
	N(\phi) := -\den \oint f_{\rm FD}(\beta(z+V_{\rm cut}-\mu) \,  (z-(h-\phi))^{-1} (\phi (z-h)^{-1})^2 \label{eqn:nonlinExplicit},
\end{equation}
 where $\oint$ is given in \eqref{eqn:oint-def}.

Applying the resolvent identity to \eqref{eqn:nonlinExplicit} iteratively with $A = h- \phi$ and $B = h$ ($h$ is defined in \eqref{eqn:nonlin-h-def}), we arrive at
\begin{equation}
	N(\phi) = \sum_{n \geq 2} \oint f_{\rm FD}(\beta(z+V_{\rm cut}-\mu)) (-1)^n \den (z -h)^{-1} (\phi (z-h)^{-1})^n \label{eqn:NexpandinNn},
\end{equation}
whenever the series converges, which we will demonstrate. Let
\begin{equation}
    N_n(\phi) = \oint f_{\rm FD}(\beta(z+V_{\rm cut}-\mu)) (-1)^n \den (z -h)^{-1} (\phi (z-h)^{-1})^n
\end{equation}
denote the $n$-th order nonlinearity. Our goal is to estimate the difference in the individual $n$-th order nonlinearities 
\begin{multline}
  N_n(\phi_1) - N_n(\phi_2)   
	=  \oint f_{\rm FD}(\beta(z+V_{\rm cut}-\mu))  \\
	  \times (-1)^n\den (z-h)^{-1} (\phi_1 (z-h)^{-1})^n - (z-h)^{-1} (\phi_2 (z-h)^{-1})^n. \label{eqn:NnDef}   
\end{multline}
To do so, we use the following expansion of $n$-th degree monomials:
\begin{equation}
    a^n - b^n = (a-b)a^{n-1} + b(a-b)a^{n-2} + \cdots + b^{n-1}(a-b).
\end{equation}
Using this pattern, we see that
\begin{multline}
  N_n(\phi_1) - N_n(\phi_2)  \\
	= \oint f_{\rm FD}(\beta(z+V_{\rm cut}-\mu)) \den (-1)^n(z-h)^{-1} (\phi_1 - \phi_2)(z-h)^{-1}(\phi_\# (z-h)^{-1})^{n-1} \\
	+ \text{$n-1$ similar terms},  
\end{multline}
where $\phi_\#$ denotes $\phi_1$ or $\phi_2$. Using the standard Schatten $p$-norm $\mathfrak{S}^p(\Om)$ and the Kato-Seiler-Simon inequality, (see Appendix \ref{sec:per-vol-set-up}) and by Lemma \ref{lem:den-to-schatten} of the Appendix, we see that
\begin{align}
	& \hspace{-1cm}\| N_n(\phi_1) - N_n(\phi_2)\|_{ 2 } \notag \\
	\leq& n \oint \eps^{-3/2}  |f_{\rm FD}(\beta(z+V_{\rm cut}-\mu))| \|(\phi_1 - \phi_2)(z-h)^{-1}(\phi_\# (z-h)^{-1})^{n-1} \|_{\mathfrak{S}^2(\Om)}  \notag\\
	\lesssim&  n\oint \eps^{-3} |e^{-\beta(z+V_{\rm cut}-\mu)}| \|(\phi_\# (z-h)^{-1})^{n-1}\|_{\mathfrak{S}^\infty(\Om)} \| \phi_1 - \phi_2\|_{ 2 }.
\end{align}
It follows that
\begin{align}
     \| N_n(\phi_1) - N_n(\phi_2)\|_{L^2(\Om)}& \notag \\
	\lesssim&\,  n m_0^{-C} \|(\phi_\# (z-h)^{-1})^{n-1}\|_{\mathfrak{S}^\infty(\Om)}
	  \|\phi_1 - \phi_2\|_{ 2 },
\end{align}
by the scaling in Assumptions \ref{ass:semi-classical} - \ref{ass:temperature} and Corollary \eqref{cor:kappa-0-fixed-scale}. Denote $d(z)$ to be the distance from $z$ to the contour. Recall that $m_0$ is given in \eqref{eqn:m-0-def} and $\inf_z d(z) = O(V_{\min} - V_{\rm cut}) = O(\delta^{1/4})$ by the choice of the contour (see Theorem \ref{thm:FD-leading-order}). 
We see that by H\"{o}lder's and Sobolev inequalities,
\begin{align}
    \|(z-h)^{-1}(\phi_\# (z-h)^{-1})^{n-1}\|_{\mathfrak{S}^\infty(\Om)} \leq& \,  \|(z-h)^{-1}\|_{\mathfrak{S}^\infty(\Om)} \|\phi_\# (z-h)^{-1} \|_{\mathfrak{S}^\infty(\Om)}^{n-1} \\
    \lesssim& \,  nm_0^{-C}\delta^{-n/4}\|\phi_\#\|_{ 4 }^{n-1} \\
    \lesssim& \,  nm_0^{-C}\delta^{-n/4} \|\phi_\#\|_{H^1(\Om)}^{n-1} .
\end{align}
Combining with \eqref{eqn:NexpandinNn} and the assumption that $\|\phi_i\|_{H^2(\Om)} \lesssim m_0^{C_3}$ (for some constant $C_3$ large) is sufficiently small, we conclude that the claim \eqref{eqn:nonlin} is proved.
\end{proof}

\subsection{Proof of Theorem \ref{thm:nonlin-tot}: LSC case}
We will prove the simpler LSC case first before embarking on the tedious  yet similar proof of the PL case. 

\begin{theorem} \label{thm:nonlin-LSC-est}
Let the assumptions of Theorem \ref{thm:eff-eqn} hold. Let $\phi_1, \phi_2 \in H^2(\Om)$ with $\|\phi_i\|_{H^2(\Om)} \lesssim m_0^{C_3}$ for some large constant $C_3$ ($m_0$ is defined in \eqref{eqn:m-0-def}). The nonlinear operator $N$ implicitly defined in \eqref{eqn:N-def} with $F= F_{\rm LSC}$ (see \eqref{eqn:F-LSC-def}) satisfies the following estimates:
\begin{equation}
	\|N(\phi_1) - N(\phi_2)\|_{L^2(\Om)} \leq C_1  m_0^{-C_2} \big(\|\phi_1\|_{H^2(\Om)} + \|\phi_2\|_{H^2(\Om)}\big) \|\phi_1 - \phi_2\|_{H^2(\Om)}, \label{eqn:nonlin-LSC}
\end{equation}
where $C_1$ and $C_2$ are constants.
\end{theorem}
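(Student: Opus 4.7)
The plan exploits the defining feature of the LSC regime: in \eqref{eqn:F-LSC-def} the Landscape potential $W_2=1/u_2$ is determined by $(-\eps^2\Delta+V-V_{\rm cut})u_2=1$, which is independent of $\phi$. Hence the only $\phi$-dependence of $F_{\rm LSC}(\phi,\mu)$ is through pointwise composition with the scalar function
\begin{equation}
g(x):=\frac{1}{(2\pi\eps)^3}\int_{\R^3} f_{\rm FD}(\beta(p^2+x))\,dp,
\end{equation}
evaluated at $x=W_2-\phi+V_{\rm cut}-\mu$. This collapses the entire nonlinear analysis to a scalar Taylor expansion of $g$, bypassing the Cauchy-integral and Schatten-class machinery used for the REHF case in Theorem \ref{thm:nonlin}.

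Setting $A:=W_2-\phi_0+V_{\rm cut}-\mu$ and recalling from Theorem \ref{thm:L-lower-bound-LSC} that $M\varphi=-g'(A)\varphi$, I would substitute into \eqref{eqn:N-def} and apply Taylor's formula with integral remainder to obtain
\begin{equation}
N(\varphi)=-\int_0^1 (1-t)\,g''(A-t\varphi)\,\varphi^2\,dt.
\end{equation}
Forming the difference of two such expressions and using $\phi_1^2-\phi_2^2=(\phi_1+\phi_2)(\phi_1-\phi_2)$ together with
\begin{equation}
g''(A-t\phi_1)-g''(A-t\phi_2)=-t(\phi_1-\phi_2)\int_0^1 g'''\bigl(A-t\phi_2-st(\phi_1-\phi_2)\bigr)\,ds
\end{equation}
decomposes $N(\phi_1)-N(\phi_2)$ into two contributions, each pointwise bounded by a single $g^{(k)}$ factor multiplied by a product drawn from $\{\phi_1,\phi_2,\phi_1-\phi_2\}$.

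The crux is then to control $\|g''(A-t\varphi)\|_{L^\infty(\Om)}$ and $\|g'''(A-t\varphi)\|_{L^\infty(\Om)}$ by $O(m_0^{-C})$ uniformly in $t\in[0,1]$ and in $\varphi$ with $\|\varphi\|_{H^2(\Om)}\lesssim m_0^{C_3}$. Since the Sobolev embedding supplies $\|\varphi\|_{L^\infty(\Om)}\lesssim\|\varphi\|_{H^2(\Om)}$, and since Lemma \ref{lem:mu-estimate} together with Assumption \ref{ass:temperature} ensures $A-t\varphi\geq V_{\min}-\mu-O(\delta^{1/4})-O(m_0^{C_3})>K/2$, the $p$-integrals defining $g^{(k)}(A-t\varphi)$ are dominated by one-dimensional Gaussians and yield bounds of order $\eps^{-3}\beta^{k-1/2}e^{-\beta(V_{\min}-\mu-O(\delta^{1/4}))}$. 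Corollary \ref{cor:kappa-0-fixed-scale} then absorbs each such factor into $O(m_0^{-C})$ for an absolute constant $C$.

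Combining these estimates via H\"older's inequality in $L^2(\Om)$ and the embedding $\|\phi_i\|_{L^\infty(\Om)}\lesssim\|\phi_i\|_{H^2(\Om)}$, and using the smallness hypothesis $\|\phi_i\|_{H^2(\Om)}\lesssim m_0^{C_3}$ to absorb any superfluous $\|\phi_i\|_{H^2}$-factors into the $m_0^{-C_2}$ prefactor, yields \eqref{eqn:nonlin-LSC}. The main obstacle is purely bookkeeping: ensuring that all powers of $\delta^{1/4}$ and $m_0$ accumulated along the way combine into a single $m_0^{-C_2}$ with $C_2$ absolute, exactly as in the parallel linear analysis of Section \ref{sec:lin-anal}. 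Beyond Corollary \ref{cor:kappa-0-fixed-scale} and Sobolev embedding, no new technical input is required.
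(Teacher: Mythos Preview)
Your proposal is correct and follows essentially the same route as the paper: both recognize that $F_{\rm LSC}$ depends on $\phi$ only through pointwise composition with a scalar function of one real variable, reduce the problem to a second-order Taylor remainder, and close via the Sobolev embedding $H^2(\Om)\hookrightarrow L^\infty(\Om)$ together with Corollary \ref{cor:kappa-0-fixed-scale}. The only cosmetic difference is that the paper first replaces $f_{\rm FD}(x)$ by $e^{-x}$ and then Taylor-expands $e^{-\beta\phi_i}$, whereas you Taylor-expand $g$ directly and bound $g''$, $g'''$; your version is arguably tidier but not substantively different.
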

\begin{proof}
Let $    h =-\varepsilon^2\, \Delta+V-V_{\rm cut}.
$
Let $u_0$ denote the Landscape function solving $h u_0=1$ and $W=1/u_0$ its Landscape potential. 
 Similar to the REHF equation, explicitly, by \eqref{eqn:F-LSC-def} and Theorem \ref{thm:L-lower-bound-LSC}, we see that
\begin{multline}
   N(\phi) = \frac{1}{(2\pi\epsilon)^{3 }} \int_{\R^3} dp   \Big(f_{\rm FD}(\beta(p^2 + W - \phi_0  + V_{\rm cut} - \mu))    \\
    - f_{\rm FD}(\beta(p^2 + W - \phi_0 - \phi+ V_{\rm cut} - \mu))\\ 
     + \beta f'_{\rm FD}(\beta(p^2 + W - \phi_0 + V_{\rm cut} - \mu)) \phi \Big).   
\end{multline}

By Assumptions \ref{ass:semi-classical} - \ref{ass:temperature}, we may replace $f'_{\rm FD}, f_{\rm FD}(x)$ by $e^{-x}$. It follows that
\begin{align}
    &\hspace{-2cm}|N(\phi_1)-N(\phi_2)| \notag \\ \lesssim& \,  \epsilon^{-3} e^{ {-\beta(W-\|\phi_0\|_\infty+V_{\rm cut} - \mu)}}  \int_{\R^3} dp e^{-\beta p^2} \Big|e^{-\beta \phi_1}-e^{-\beta \phi_2}  +\beta (\phi_1-\phi_2)\Big|\notag \\
    \lesssim& \,  \beta^{-3/2}\eps^{-3} e^{-\beta(V_{\min} - \|\phi_0\|_{ \infty} - \mu)} \,  |e^{-\beta \phi_1}-e^{-\beta \phi_2}  +\beta (\phi_1-\phi_2)| \notag \\
    \lesssim& \, 
    m_0^{-C} \,  |e^{-\beta \phi_1}-e^{-\beta \phi_2}  +\beta (\phi_1-\phi_2)| \label{eq:382}.
\end{align}
Since $\|\phi_i\|_{ \infty} \lesssim \|\phi_i\|_{H^2(\Om)} \lesssim m_0^{C_3}$ is smaller than $1/\beta$ by Assumption \ref{ass:temperature}, the Taylor expansion of $e^{x}$ and \eqref{eq:382} proves equation \eqref{eqn:nonlin-LSC}.

\end{proof}

\subsection{Proof of Theorem \ref{thm:nonlin-tot}: PL case}

\begin{theorem} \label{thm:nonlin-PL-est}
Let the assumptions of Theorem \ref{thm:eff-eqn} hold. Assume that $\phi_1, \phi_2 \in H^2(\Om)$ and $\|\phi_i\|_{H^2(\Om)} \lesssim m_0^{C_3}$ for some large constant $C_3$ ($m_0$ is defined in \eqref{eqn:m-0-def}). The nonlinear operator $N$ implicitly defined in \eqref{eqn:N-def} with $F=F_{\rm PL}$ (see \eqref{eqn:F-PL-def}) satisfies the following estimates:
\begin{equation}
	\|N(\phi_1) - N(\phi_2)\|_{L^2(\Om)} \leq C_1  m_0^{-C_2} (\|\phi_1\|_{H^2(\Om)} + \|\phi_2\|_{H^2(\Om)}) \|\phi_1 - \phi_2\|_{H^2(\Om)}, \label{eqn:nonlin-PL}
\end{equation}
where $C_1$ and $C_2$ are constants.
\end{theorem}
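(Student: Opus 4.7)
The strategy mirrors the REHF proof of Theorem \ref{thm:nonlin}, but accounts for the additional $\phi$-dependence of the Landscape function. Write $F_{\rm PL}(\phi_0+\phi,\mu) = g(\widehat W(\phi))$, where $g(W) := (2\pi\eps)^{-3}\int_{\R^3} f_{\rm FD}(\beta(p^2+W+V_{\rm cut}-\mu))\, dp$ is applied pointwise and $\widehat W(\phi) = 1/\widehat u(\phi)$ with $\widehat u(\phi) = (h-\phi)^{-1}1$ for $h = -\eps^2\Delta + V - \phi_0 - V_{\rm cut}$. Since $M\phi = d_\phi F_{\rm PL}|_{\phi_0}\cdot\phi$, the definition \eqref{eqn:N-def} identifies $N(\phi) = -[F_{\rm PL}(\phi_0+\phi,\mu)-F_{\rm PL}(\phi_0,\mu)-M\phi]$ with the second-order Taylor remainder. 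Two applications of the fundamental theorem of calculus give
\begin{equation*}
    N(\phi_1)-N(\phi_2) = -\int_0^1\!\!\int_0^1 d^2_\phi F_{\rm PL}\bigl|_{\phi_0+\psi_{s,t}}\cdot\bigl(\phi_2+t(\phi_1-\phi_2),\, \phi_1-\phi_2\bigr)\, ds\, dt,
\end{equation*}
with $\psi_{s,t} := s(\phi_2+t(\phi_1-\phi_2))$. It therefore suffices to show that $\|d^2_\phi F_{\rm PL}|_\psi(\chi,\chi')\|_{L^2(\Om)}\lesssim m_0^{-C}\|\chi\|_{H^2}\|\chi'\|_{H^2}$ uniformly in $\|\psi\|_{H^2}\lesssim m_0^{C_3}$.

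Direct differentiation gives
\begin{equation*}
    d^2_\phi F_{\rm PL}(\chi,\chi') = g''(\widehat W)\,d_\phi\widehat W(\chi)\,d_\phi\widehat W(\chi') + g'(\widehat W)\,d^2_\phi \widehat W(\chi,\chi'),
\end{equation*}
where the resolvent identity for $R_\psi := (h-\psi)^{-1}$ produces $d_\phi\widehat u(\chi) = R_\psi(\chi\widehat u)$ and $d^2_\phi \widehat u(\chi,\chi') = R_\psi(\chi R_\psi(\chi'\widehat u)) + R_\psi(\chi' R_\psi(\chi\widehat u))$, and the chain rule on $\widehat W = 1/\widehat u$ converts these into the required expressions for $d_\phi\widehat W$ and $d^2_\phi\widehat W$. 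The quantitative inputs are then: (i) by \eqref{eqn:cut-size} and the smallness of $\phi_0$ and $\psi$, the shifted potential satisfies $V-\phi_0-\psi-V_{\rm cut}\gtrsim\delta^{1/4}$, so $\|R_\psi\|_{L^\infty\to L^\infty}\lesssim\delta^{-1/4}$, $\widehat u\asymp\delta^{-1/4}$, and $\widehat W\asymp\delta^{1/4}$; (ii) $|g^{(k)}(W)|\lesssim \beta^k \eps^{-3}\beta^{-3/2}e^{-\beta(W+V_{\rm cut}-\mu)}\lesssim \beta^k$ uniformly for $W+V_{\rm cut}\in[V_{\min},V_{\max}]$, by Corollary \ref{cor:kappa-0-fixed-scale}; and (iii) the Sobolev embedding $H^2(\Om)\hookrightarrow L^\infty(\Om)$ upgrades sup-norm bounds on $\chi,\chi'$ to $H^2$ bounds. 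Combining (i)--(iii) and invoking Assumption \ref{ass:temperature} (which gives $\beta\lesssim\log\eps^{-3}$) to absorb the accumulated polynomial factor in $\beta\delta^{-1/4}$ into $m_0^{-C}$ yields \eqref{eqn:nonlin-PL}.

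The principal obstacle is the bookkeeping. Each differentiation of $\widehat W = 1/\widehat u$ introduces factors $\widehat u^{-2}$ or $\widehat u^{-3}$, and each resolvent $R_\psi$ costs $\delta^{-1/4}$, producing a polynomial in $\delta^{-1}$ and $\beta$ whose total degree must be seen to be dominated by $m_0^{-C}=\eps^{-C\delta^{1/4}}$ for a fixed $C$; this is possible only in the scaling regime of Assumptions \ref{ass:semi-classical}--\ref{ass:temperature}. The hypothesis $\|\phi_i\|_{H^2}\lesssim m_0^{C_3}$ with $C_3$ chosen larger than this accumulated power ensures simultaneously the convergence of the Neumann expansion for $R_\psi=(h-\psi)^{-1}$ around $R_0$ and the absorption of all prefactors into the desired $m_0^{-C_2}$.
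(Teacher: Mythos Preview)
Your proposal is correct and uses the same underlying ingredients as the paper: resolvent bounds on $(h-\psi)^{-1}$ of size $O(\delta^{-1/4})$, the chain rule for $\widehat W = 1/\widehat u$, Corollary~\ref{cor:kappa-0-fixed-scale} to control the size of $g$ and its derivatives, Sobolev embedding $H^2(\Om)\hookrightarrow L^\infty(\Om)$, and absorption of the resulting powers of $\beta$ and $\delta^{-1}$ into $m_0^{-C}$ via Assumption~\ref{ass:temperature}.

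The organization differs slightly. The paper first replaces $f_{\rm FD}$ and $f'_{\rm FD}$ by $e^{-x}$ (legitimate since $W+V_{\rm cut}-\mu$ is bounded below and $\beta$ is large), reducing the problem to bounding $e^{-\beta(W_1-W_0)}-e^{-\beta(W_2-W_0)}-\beta\, d_\phi W|_{\phi=0}(\phi_1-\phi_2)$; it then expands $W(\phi)$ as a Neumann series in $h^{-1}\phi$ and reads off the quadratic remainder directly. Your route is more systematic: you keep $g$ abstract and apply the integral form of the second-order Taylor remainder, bounding $d^2_\phi F_{\rm PL}$ uniformly along the interpolating segment. Your formulation makes the role of the smallness hypothesis $\|\phi_i\|_{H^2}\lesssim m_0^{C_3}$ more transparent (it guarantees that $\psi_{s,t}$ stays in the region where the resolvent and Landscape bounds hold), while the paper's version is slightly more explicit about the actual powers of $\delta^{-1/4}$ and $\beta$ that accumulate. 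Either packaging yields \eqref{eqn:nonlin-PL}.
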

\begin{proof}
Let 
\begin{equation}
    h =-\varepsilon^2\, \Delta+V-\phi_0-V_{\rm cut}.
\end{equation}
Let $u_0$ denote the Landscape function solving $h u_0=1$ and $W_0=1/u_0$ its Landscape potential. Let $W(\phi)= [(h-\phi)^{-1}1]^{-1}$ be the Landscape potential of $h-\phi$. 
 Similar to the LSC case, by \eqref{eqn:F-PL-def} and Theorem \ref{thm:L-lower-bound-PL}, we see that
\begin{multline}
    N(\phi) = \frac{1}{(2\pi\epsilon)^{3 }} \int_{\R^3} dp \Big(f_{\rm FD}(\beta(p^2 + W_0 + V_{\rm cut} - \mu))- f_{\rm FD}(\beta(p^2 + W(\phi) + V_{\rm cut} - \mu))   \\
     + \beta   f'_{\rm FD}(\beta(p^2 + W_0 + V_{\rm cut} - \mu)) d_\phi W \mid_{\phi = 0} \phi\Big).  
\end{multline}
Again we  replace $f'_{\rm FD}, f_{\rm FD}(x)$ by $e^{-x}$ . Denote by $W_1=W(\phi_1)$ and $W_2=W(\phi_2)$.  Similar to \eqref{eq:382}, it follows that
\begin{equation}
     |N(\phi_1)-N(\phi_2)|  \lesssim   m_0^{ - C}   \Big|e^{-\beta (W_1-W_0)} - e^{-\beta (W_2-W_0)} -\beta   d_\phi W \mid_{\phi = 0} (\phi_1-\phi_2) \Big|  .
\end{equation}
Direct computation shows that 
\begin{equation}
    W(\phi)=\frac{1}{(h-\phi)^{-1}1}=\frac{1}{u_0\big(1+u_0^{-1}\Sigma_{n\ge 1}(-h^{-1}\phi)^n\, u_0\big)} \label{eqn:W-phi-expansion}
\end{equation}
and 
\begin{equation}
    d_\phi W_{\phi=0} \phi' = W_0^2 h^{-1} \phi' h^{-1} 1 . 
\end{equation}
Hence,
\[
W(\phi)-W_0-d_\phi W \mid_{\phi = 0}\phi=W_0^2(h^{-1}\phi)^2u_0+{\textrm{higher order terms of}}\ h^{-1}\phi. 
\]
As before, let $v_{\rm min} :=V_{\min} - V_{\rm cut} = O(\delta^{1/4})$ (see \eqref{eqn:cut-size}). Since $\|h^{-1}\phi\|_\infty\le v_{\min}^{-1}\, \delta \ll 1$ and $\delta \ll v_{\min}^C$ and  similar to the estimate of the nonlinearity in the LSC equation, one has 
\begin{equation}
    \big\|W(\phi)-W_0-d_\phi W \mid_{\phi = 0}\phi \big\|_2
\lesssim m_0^{-C}\|\phi^{2}\|_{H^2}
\end{equation}
for some constant $C > 0$. Since $\|\phi_i\|_{ \infty} \lesssim \|\phi_i\|_{H^2(\Om)}$.
Therefore, 
\begin{align}
 &\Big\| e^{-\beta (W_1-W_0)} - e^{-\beta (W_2-W_0)} - \beta     d_\phi W \mid_{\phi = 0} (\phi_1-\phi_2)  \Big\|_2 \notag \\
 &\hspace{6cm} \lesssim \beta^2 m_0^{-C} (\|\phi_1\|_{H^2}+\|\phi_2\|_{H^2})\|\phi_1-\phi_2\|_{H^2}. \label{eq:tmp0} 
\end{align}
 Assumption \ref{ass:pname} and \ref{ass:temperature} allows the $\beta^2$ to be absorbed into $m_0^{-C}$. This implies equation \eqref{eqn:nonlin-PL}.
\end{proof}

\appendix
\section{Trace per volume and associated Schatten norm estimates} \label{sec:per-vol-set-up}

Let $\lat$ denote a Bravais lattice in $\R^3$ and $\Om$ its fundamental domain (for example, the Wigner-Seitz cell). For $l \in \R^3$, let $U_l$ denote the translation operator
\begin{equation}
    (U_l f)(x) = f(x-l).
\end{equation}
An operator on $L^2(\R^3)$ is said to be translation invariant if $A$ commutes with $U_l$ for all $l \in \R^3$. It is said to be ($\lat$) periodic if $A$ commutes with $U_l$ for all $l \in \lat$. 

Let $\Tr$ denote the usual trace on $L^2(\R^3)$. It is evident that no periodic or translation invariant operators have a finite trace. Nonetheless, we consider trace per volume $\Om$ defined via
\begin{equation}
    \Tr_{\Om} A := \frac{1}{|\Om|} \Tr \chi_\Om A , \label{eqn:trace-per-vol}
\end{equation}
where $\chi_\Om$ is the indicator function of $\Om$. If $A$ is periodic, $\Tr_\Om$ is independent of translates of $\Om$. Associated to $\Tr_\Om$ is a family of periodic Schatten spaces $\mathfrak{S}^p(\Om)$ given via
\begin{equation}
    \mathfrak{S}^p(\Om) = \{ A \in \mathcal{B}(L^2(\R^3)) \text{ and ($\lat$) periodic} : \|A\|_{\mathfrak{S}^p(\Om)} < \infty \}
\end{equation}
where
\begin{equation}
    \|A\|_{\mathfrak{S}^p(\Om)}^p = \Tr_\Om (A^*A)^{p/2}.
\end{equation}
The $\mathfrak{S}^p(\Om)$ norm inherits most inequality estimates from the usual Schatten $p$-norm with the notable exception that
\begin{equation}
    \|A\|_{\mathfrak{S}^\infty(\Om)} \leq C\|A\|_{\mathfrak{S}^p(\Om)}
\end{equation}
{\it fails} to hold for $1 \leq p < \infty$ and for any $C > 0$.

Given an operator $A$, its density $\den A$ is a measurable function on $\Om$, if it exists, defined via the Riesz representation theorem and the formula
\begin{equation}
    \Tr f A = \int_{\R^3} f\den A \label{eqn:den-def-via-riesz}
\end{equation}
for any $f \in C^\infty_c(\R^3)$, where $f$ on the left hand side is regarded as a multiplication operator on $L^2(\R^3)$. When $A$ has an integral kernel $A(x,y)$, that is
\begin{equation}
    (Af)(x) = \int_{\R^3} A(x,y)f(y) dy,
\end{equation}
then,
\begin{equation}
    (\den A)(x) = A(x,x),
\end{equation}
whenever $A(x,x)$ is defined and unambiguous. We outline a few special cases where $\den A$ is defined and give its estimates below, which will be frequently used in the proof of the main results. 

\begin{lemma} \label{lem:den-to-schatten}
Let $d = 3$ and $\frac{3}{2} < p < 3$. Suppose that $A \in \mathfrak{S}^p(\Om)$ and $R$ be given by \eqref{eq:R00}, 
then $\den  AR$ and $\den  RA \in L^2(\Om)$. Moreover,
\begin{equation}
    \|\den  AR \|_{L^p(\Om)}, \|\den R A\|_{L^p(\Om)}  \lesssim \frac{\epsilon^{-3/q}}{d(z)} \|A\|_{\mathfrak{S}^p(\Om)} ,
\end{equation}
where $\frac{1}{q}+ \frac{1}{p}=1$.
\end{lemma}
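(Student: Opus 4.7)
The approach is by duality: I would bound $\den(AR)$ in $L^p(\Omega)$ by testing it against periodic $L^q(\Omega)$ functions with $\tfrac1p+\tfrac1q=1$, and reduce the resulting bilinear pairing to a H\"older inequality in the Schatten-per-volume classes combined with a Kato--Seiler--Simon (KSS) bound for the resolvent $R$. For any periodic $f \in L^q(\Omega)$ viewed as a bounded multiplication operator on $L^2(\R^3)$, the operator $fAR$ is $\mathcal{L}$-periodic, and by the definitions of $\den$ and of the trace per volume one has
\[
\int_\Omega f(x)\,(\den AR)(x)\,dx \;=\; |\Omega|\,\Tr_\Omega(fAR).
\]
Cyclicity of $\Tr_\Omega$ on periodic operators (a standard smooth-cutoff argument) followed by H\"older's inequality in the $\mathfrak S^p(\Omega)$ classes yields
\[
|\Tr_\Omega(fAR)| = |\Tr_\Omega(ARf)| \;\le\; \|A\|_{\mathfrak S^p(\Omega)}\,\|Rf\|_{\mathfrak S^q(\Omega)}.
\]

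It then suffices to prove a periodic KSS-type bound of the form $\|Rf\|_{\mathfrak S^q(\Omega)} \lesssim \epsilon^{-3/q} d(z)^{-1}\|f\|_{L^q(\Omega)}$. My plan is to pass to the Bloch--Floquet decomposition $L^2(\R^3) = \int_{\rm BZ}^\oplus L^2(\Omega)\,dk$, in which the $\mathfrak S^q(\Omega)$ norm becomes the $L^q$ average over the Brillouin zone of the fiberwise Schatten norms; on each fiber one applies the standard inequality $\|fg(-i\nabla)\|_{\mathfrak S^q} \lesssim \|f\|_{L^q(\Omega)}\,\|g\|_{L^q}$ with Fourier symbol $g(p)=(z-\epsilon^2p^2)^{-1}$. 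The size of $\|g\|_{L^q(\R^3)}$ is estimated by the scaling $p=\epsilon^{-1}k$,
\[
\|g\|_{L^q(\R^3)}^q \;=\; \epsilon^{-3}\!\int_{\R^3}\!|z-k^2|^{-q}\,dk,
\]
and a direct computation (localizing the integrand near the singular set $\{k^2=\Re z\}$ and rescaling in the normal direction) shows the remaining integral is $\lesssim d(z)^{-q}$ up to a constant depending on the a priori bound $\sup|z|$ on the contour in Figure \ref{fig:cauchy-int-contour-general}. The restriction $3/2<p<3$ (equivalently $3/2<q<3$) is precisely what is needed to make this symbol integral converge at infinity. Combining this with the previous display and taking the supremum over $f$ with $\|f\|_{L^q(\Omega)}\le 1$ gives the claimed bound for $\|\den(AR)\|_{L^p(\Omega)}$. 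The analogous bound for $\|\den(RA)\|_{L^p(\Omega)}$ follows either by the same argument with $R$ on the other side, or by passing to adjoints and noting that $R^*$ has the same Fourier symbol as $R$ under $z\mapsto \bar z$, which preserves $d(z)$.

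\textbf{Main obstacle.} The delicate step is the KSS bound in the regime $q\in(3/2,2)$ (that is, $p\in(2,3)$), where the classical proof of KSS on $\R^d$ requires $q\ge 2$ and does not apply directly. I expect to handle this either by a complex interpolation argument between the endpoint $q=2$, at which the bound reduces to an explicit Hilbert--Schmidt computation on a single fundamental domain, and an operator-norm endpoint $\|Rf\|_{\mathrm{op}}\le \|R\|_{\mathrm{op}}\|f\|_\infty \le d(z)^{-1}\|f\|_\infty$, or by carrying out the proof directly inside the Bloch decomposition (where for each fixed quasi-momentum $R$ has discrete spectrum and KSS can be made genuinely finite-dimensional band by band). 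A secondary bookkeeping point is that the $d(z)$-power extracted from $\|g\|_{L^q}$ is in fact only $d(z)^{3/(2q)-1}$, which matches $d(z)^{-1}$ only in the regime $d(z)\lesssim 1$ relevant to all applications of the lemma elsewhere in the paper; this is why the statement is formulated with the loss $d(z)^{-1}$ rather than the sharper exponent.
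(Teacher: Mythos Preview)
Your approach is essentially the same as the paper's: test $\den(RA)$ (or $\den(AR)$) against $L^q(\Omega)$ functions, rewrite the pairing as $\Tr_\Omega(\phi RA)$, apply H\"older in the per-volume Schatten classes to split off $\|A\|_{\mathfrak S^p(\Omega)}$, and invoke Kato--Seiler--Simon for the remaining factor $\|\phi R\|_{\mathfrak S^q(\Omega)}$. The only cosmetic difference is that the paper groups the product as $(\phi R)\cdot A$ directly (avoiding the cyclicity step), and simply cites KSS for the bound $\|\phi R\|_{\mathfrak S^q(\Omega)}\lesssim \epsilon^{-3/q}d(z)^{-1}\|\phi\|_{L^q(\Omega)}$ without further comment.

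You go further than the paper in two respects. First, you correctly flag that classical KSS requires the Schatten exponent to be at least $2$, so the regime $q\in(3/2,2)$ needs an argument; the paper glosses over this. Second, your remark that the natural $d(z)$-exponent coming out of $\|(z-\cdot)^{-1}\|_{L^q(\R^3)}$ is $3/(2q)-1$ rather than $-1$, and that the statement as written relies on $d(z)\lesssim 1$, is accurate and matches how the lemma is actually used (the contour always sits at distance $O(\delta^{1/4})\ll 1$).

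One small gap in your obstacle discussion: interpolating between the Hilbert--Schmidt endpoint $q=2$ and the operator-norm endpoint $q=\infty$ only yields $q\ge 2$, which is the wrong side for $q\in(3/2,2)$. Your alternative suggestion---working fiberwise in the Bloch--Floquet decomposition, where each fiber is a discrete problem and the relevant bound is just an $\ell^q$ estimate on the symbol over the dual lattice---is the route that actually covers the full range.
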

\begin{proof}
We prove the case for $RA$ only as the case for $AR$ is similar. We use the $L^p(\Om)$-$L^q(\Om)$ duality (where $\frac{1}{p} + \frac{1}{q} = 1$ for $\frac{3}{2} < p, q < \infty$). Let $\phi \in L^p(\Om)$ and apply H\"{o}lder's inequality to 
\begin{equation}
    \Tr_\Om [(\phi R) A] \lesssim \|\phi R\|_{\mathfrak{S}^q(\Om)} \|A\|_{\mathfrak{S}^p(\Om)},
\end{equation}
where $\Tr_\Om$ is the trace per volumne $\Om$. Kato-Seiler-Simon inequality show
\begin{equation}
    \Tr_{\Om} [(\phi R) A] \leq C \frac{\epsilon^{-3/q}}{d(z)} \|\phi\|_{L^q(\Om)}\|A\|_{\mathfrak{S}^p(\Om)},
\end{equation}
where $\Im z$ is the imaginary part of $z$. The proof is now completed by the $L^q(\Om)$-$L^p(\Om)$ duality and the Riesz representation theorem:
\begin{equation}
    |\Om|\Tr_\Om(\phi RA) = \Tr(\phi RA) = \lan \phi, \den(RA) \ran_{L^2(\Om)}.
\end{equation}
\end{proof}

\section{Existence of solution to the Poisson-Landscape equation} \label{app:PL-existence}

We show that the LSC equation (see \eqref{eqn:general-F-def} and \eqref{eqn:F-LSC-def}) has a solution by minimizing its associated energy functional. To this end, let $\eta(p, x)$ be periodic in $x$ and $\rho_\eta = \int_{\R^3} dp \, \eta(p, x)$. Let $s(x) = -\frac{1}{2}(x\log(x) + (1-x)\log(1-x))$. We define the entropy functional
\begin{equation}
    S(\eta) = \int_{\R^3 \times \Om}  dpdx \, s(\eta(p,x)),
\end{equation}
whenever the integral is convergent. Otherwise we set $S(\eta) = \infty$. Finally, we define
\begin{align}
   \mathcal{F}_{\rm LSC}(\eta)  =& \int_{\R^3 \times \Om} dp dx \,  (\eps^2 p^2+W+V_{\rm cut}) \eta(p, x) \notag \\
    &+ \frac{1}{2}\lan (\rho_\eta - \kappa), (-\Delta)^{-1}(\rho_\eta - \kappa) \ran_{L^2(\Om)} \notag\\
    &- \beta^{-1}S(\eta).
\end{align}
The associated space on which we perform our minimization is
\begin{align}
    D_{\kappa} = \{ \eta  \in  L^1(\R^3_p\times \Om_x, &(1+p^2)dpdx) : 0 \leq \eta \leq 1,  \notag \\
    & \int_{\R^3 \times \Om} \eta = \int_{\Om} \kappa, \text{ and } \rho_\eta \in \kappa + \dot{H}^{-1}(\Om) \}, \label{eqn:D-kappa}
\end{align}
where, using the notation $\nabla^{-1} := \nabla(-\Delta)^{-1}$,
\begin{equation}
    \dot{H}^{-1}(\Om) = \left\{ f : \int_\Om f = 0 \text{ and } \nabla^{-1} f \in L^2(\Om) \right\}.
\end{equation}
We note that $D_\kappa$ is not an affine space, but it is convex. Let
\begin{align}
    L_1 \eta &= \int_{\R^3} dp \, \eta(p,x), \\
    L_2 \eta &= \int_\Om dx \, \eta(p,x),
\end{align}
and define projections
\begin{align}
    P =& \frac{1}{|\Om|} L_2 ,\\
    \bar P =& 1-P.
\end{align}
For simplicity, we will denote
\begin{equation}
    L^p(\R^3\times \Om, (1+p)^2dpdx) = L^p((1+p)^2dpdx) 
\end{equation}
interchangeably.

\begin{theorem}[LSC Existence and Uniqueness] \label{thm:PL-existence-uniqueness}
Assume that $\bar P \kappa \in \dot{H}^{-1}$. Then $\mathcal{F}_{\rm LSC}$ is convex and has a unique minimizer $\eta \in D_\kappa$. If $-\Delta \phi = \kappa - \rho_\eta$, then $\phi$ solves the Poisson-Landscape equation \eqref{eqn:general-F-def} with $F=F_{\rm LSC}$ (cf. \eqref{eqn:F-LSC-def}) with $\mu$ induced by the Lagrangian multiplier of the constraint.
\end{theorem}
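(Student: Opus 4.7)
The plan is to treat this as a standard direct-method argument in the calculus of variations, exploiting the strict convexity of $\mathcal{F}_{\rm LSC}$ on the convex set $D_\kappa$, and then extract the LSC equation as the Euler--Lagrange system with the chemical potential $\mu$ emerging as a Lagrange multiplier. First I would establish strict convexity: the potential term $\int(\eps^2 p^2 + W + V_{\rm cut})\eta\, dpdx$ is linear in $\eta$; the Coulomb term equals $\tfrac{1}{2}\|\nabla^{-1}(\rho_\eta-\kappa)\|_{L^2(\Om)}^2$, a convex quadratic in $\eta$ (this is where the assumption $\bar P\kappa \in \dot H^{-1}$ is needed, so that the quadratic is finite on $D_\kappa$); and $-S$ is strictly convex because $-s(x)=\tfrac{1}{2}(x\log x + (1-x)\log(1-x))$ is strictly convex on $[0,1]$. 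Strict convexity alone gives uniqueness once a minimizer is produced.

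For existence I would apply the direct method. The constraint $0\le\eta\le 1$ makes $-S$ bounded (by $\tfrac{1}{2}|\Om|\cdot(\log 2) \cdot$ phase space weight, after truncation), and the kinetic term $\eps^2\int p^2\eta$ controls $\|\eta\|_{L^1((1+p^2)dpdx)}$, so $\mathcal{F}_{\rm LSC}$ is coercive on $D_\kappa$ and bounded below. A minimizing sequence $\eta_n$ is then bounded in $L^1((1+p^2)dpdx)\cap L^\infty(dpdx)$, and the induced $\rho_{\eta_n}-\kappa$ is bounded in $\dot H^{-1}(\Om)$. Extracting a subsequence with $\eta_n \rightharpoonup \eta^*$ weakly-$*$ in $L^\infty$ and weakly in weighted $L^1$, and $\rho_{\eta_n}\rightharpoonup \rho_{\eta^*}$ weakly in $\dot H^{-1}$, the limit $\eta^*$ lies in $D_\kappa$ since $0\le\eta^*\le 1$ and the linear constraint $\int\eta^*=\int\kappa$ are preserved under weak limits. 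Lower semicontinuity of each piece (linear terms pass through, the Coulomb term is l.s.c.\ as a squared Hilbert-space norm under weak convergence, and $-S$ is l.s.c.\ because $-s$ is convex and bounded below) then gives $\mathcal{F}_{\rm LSC}(\eta^*)\le\liminf \mathcal{F}_{\rm LSC}(\eta_n)$, so $\eta^*$ is the unique minimizer.

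To identify the minimizer with the LSC equation I would perform a first-variation computation. Taking admissible variations $\delta\eta$ with $\int\delta\eta=0$ and introducing a Lagrange multiplier $\mu$ for the charge constraint, the Euler--Lagrange condition reads
\begin{equation*}
\eps^2 p^2 + W + V_{\rm cut} + (-\Delta)^{-1}(\rho_{\eta^*}-\kappa) - \beta^{-1} s'(\eta^*) - \mu = 0
\end{equation*}
pointwise in $(p,x)$. Setting $\phi := (-\Delta)^{-1}(\kappa-\rho_{\eta^*})$ and inverting $s'(\eta)=\tfrac{1}{2}\log\!\bigl((1-\eta)/\eta\bigr)$ yields
\begin{equation*}
\eta^*(p,x) = f_{\rm FD}\!\bigl(\beta(\eps^2 p^2 + W - \phi + V_{\rm cut} - \mu)\bigr),
\end{equation*}
and the change of variables $p\mapsto p/\eps$ (with the $(2\pi)^{-3}$ normalization absorbed into the entropy's reference measure) converts $\rho_{\eta^*}=\int dp\,\eta^*$ into exactly $F_{\rm LSC}(\phi,\mu)$. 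Combined with $-\Delta\phi = \kappa - \rho_{\eta^*}$ this is the LSC equation \eqref{eqn:general-F-def}.

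The main technical obstacle I anticipate is the lower semicontinuity of the kinetic term $\int \eps^2 p^2\eta\, dpdx$: since $p^2$ is unbounded, one cannot simply use weak-$*$ convergence in $L^\infty$. I would handle this with the weighted $L^1$ topology and Fatou's lemma after passing to a subsequence converging a.e., using the uniform bound $\int p^2\eta_n \le C$ along the minimizing sequence to prevent mass escape to $|p|=\infty$. A secondary subtlety is verifying that the formal Lagrange multiplier calculation is rigorous on the boundary of $D_\kappa$ (where $\eta^*\in\{0,1\}$ on a set of positive measure): here the strict convexity of $-s$ near the endpoints, together with the fact that its derivative blows up, forces $0<\eta^*<1$ a.e., so the variation is interior and the Euler--Lagrange equation holds pointwise without slack.
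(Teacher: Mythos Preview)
Your approach is essentially the same as the paper's: direct method via convexity, coercivity, weak compactness, and lower semicontinuity, followed by the Euler--Lagrange derivation. The paper organizes it into five labeled steps but the content matches.

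One point deserves correction. Your coercivity argument treats the entropy and kinetic terms separately, asserting that the constraint $0\le\eta\le 1$ makes $-S$ ``bounded.'' On the infinite phase space $\R^3\times\Om$ this fails: $S(\eta)=\int s(\eta)\,dpdx$ can be arbitrarily large even for $\eta\in D_\kappa$, so $-\beta^{-1}S(\eta)$ is unbounded below and cannot be decoupled from the kinetic term. The paper resolves this by observing that the combination $\int h\eta - \beta^{-1}S(\eta)$ (the free energy without the Coulomb term) is minimized over all $0\le\eta\le 1$ by $\eta=f_{\rm FD}(\beta h)$, with finite minimum value; splitting off half of $\int h\eta$ then yields
\[
\mathcal{F}_{\rm LSC}(\eta)\ \ge\ \tfrac12\int h\eta\ +\ \tfrac12\|\nabla^{-1}(\rho_\eta-\kappa)\|_{L^2(\Om)}^2\ -\ C,
\]
which is genuinely coercive in $\|\cdot\|_{D_\kappa}$. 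This coupling of kinetic energy and entropy is the one nontrivial ingredient in the lower bound.

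Two smaller differences worth noting. For compactness the paper works in $L^p((1+p^2)dpdx)$ for $1<p<\infty$ (reflexive, and $\eta_n^p\le\eta_n$ gives the bounds for free from the $L^1$ bound), which sidesteps the Dunford--Pettis uniform-integrability check your ``weak in weighted $L^1$'' would require. For lower semicontinuity the paper uses convexity of $\mathcal{F}_{\rm LSC}$ to pass (via Mazur-type convex combinations) to strong and then pointwise convergence, invoking Fatou; your direct weak-l.s.c.\ argument from convexity of $-s$ and of the squared $\dot H^{-1}$ norm is equally valid and arguably cleaner.
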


\begin{proof}[Proof of Theorem \ref{thm:PL-existence-uniqueness}]

We equip $D_\kappa$ with the norm
\begin{equation}
    \|\eta\|_{D_\kappa} = \|\eta(p,x)\|_{L^1((1+p^2)dpdx)} + \| \bar P\rho_\eta  \|_{\dot{H}^{-1}(\Om)}.
\end{equation}
Note that $D_\kappa$ is closed with respect to this norm. 

{\bf Step 1: Euler-Lagrange equation.}
Let $h(p,x) = \eps^2 p^2 + W + V_{\rm cut}$
The energy functional can be rewritten as
\begin{equation}
    \mathcal{F}_{\rm LSC}(\eta) =  L_1L_2 (h\eta) + \frac{1}{2} \|\nabla^{-1} (L_1 \eta - \kappa)\|_{L^2(\Om)}^2 - \beta^{-1} L_1L_2 s(\eta). \label{eqn:E-compact-form}
\end{equation}
From this, the Euler-Lagrange equation subject to $L_1L_2 \eta = L_2 \kappa$ is
\begin{equation}
    h\eta + (-\Delta)^{-1}(\rho_\eta -   \kappa) - \beta^{-1} s'(\eta) -\mu \eta = 0
\end{equation}
for a suitable $\mu$ due to the constraint $L_1L_2 \eta = L^2 \kappa$. Solving for $\eta$, we see that
\begin{align}
    \eta = f_{\rm FD}(\beta(h-(-\Delta)^{-1}(\rho_\eta -   \kappa) - \mu)). \label{eqn:EL-PL-pre}
\end{align}
Integrating equation \eqref{eqn:EL-PL-pre} with respect to $p$, we define
\begin{align}
    \rho_\eta = \int_{\R^3} dp \, f_{\rm FD}(\beta(\eps^2 p^2 + W + V_{\rm cut} - \phi - \mu)).
\end{align}
Finally, set $\phi = (-\Delta)^{-1}(\rho_\eta -   \kappa)$. We see that $(\phi, \mu)$ solves \eqref{eqn:general-F-def} with \eqref{eqn:F-LSC-def}.

{\bf Step 2: Coercivity.} 
Note that without the interaction term (Coulomb term), an unconstrained minimizer to
\begin{align}
    \int_{\R^3 \times \Om} dp dx \,  (\frac{1}{2} \eps^2 p^2+W+V_{\rm cut}) \eta(p, x) \notag - \beta^{-1}S(\eta)
\end{align}
is
\begin{align}
    \eta = f_{\rm FD}(\beta h ).
\end{align}
This shows that
\begin{align}
    \mathcal{F}_{\rm LSC}(\eta) \geq& \frac{1}{2} L_1L_2 (h\eta) + \frac{1}{2} \|\nabla^{-1} (L_1 \eta - \kappa)\|_{L^2(\Om)}^2 - C \\
    \gtrsim& \|\eta\|_{D_\kappa} - C
\end{align}
for $\|\eta\|_{D_\kappa}$ large.

{\bf Step 3: Convergent subsequence.} Since $\mathcal{F}_{\rm LSC}$ is coercive, we can find a minimizing sequence $\eta_n$. Note that since $0 \leq \eta_n \leq 1$, we have that $\eta_n^p \leq \eta_n$ for all $1 \leq p  < \infty$. It follows that
\begin{align}
    \|\eta_n\|_{L^p((1+p^2)dpdx)}^p \leq \|\eta_n\|_{L^1((1+p^2)dpdx)} < \infty.
\end{align}
In particular, $\eta_n$ converges weakly to some $\eta(p)$ for each $1 < p < \infty$ in $L^p(\R^3\times \Om, (1+p^2)dpdx)$. By testing against compactly supported smooth functions (i.e. 
\[
    \lan \varphi, \eta_n \ran_{L^2((1+p^2)dpdx)} \rightarrow \lan \varphi, \eta \ran_{L^2((1+p^2)dpdx)}
\]
for $n \rightarrow \infty$, where $\varphi$ is smooth compactly supported on $\R^3 \times \Om$, which is in $L^p$ for any $1 \leq p \leq \infty$, we see that the $\eta(p) = \eta(q)$ for any $1 < p, q < \infty$.

So we will denote by $\eta$ the common limit. Moreover, note that for any measurable $f$ on $\R^3 \times \Om$, 
\begin{align}
    \int_{\R^3 \times \Om}  \eta_n f  =&  \int_{\R^3 \times \Om} (1+p^2)dp dx \,  \eta_n \cdot \frac{f}{1+p^2}. \label{eqn:eta-Lp}
\end{align}
Taking $f=1$ and noting that $(1+p^2)^{-1} \in L^s((1+p^2)dpdx)$ for $s > 5/2$, we see that
\begin{align}
    \int_{\R^3 \times \Om} \eta = \lim_{n \rightarrow \infty} \int_{\R^3 \times \Om} \eta_n = \int_\Om \kappa.
\end{align}
By the same reasoning with $f\in L^\infty$, it follows that the weak convergence can be achieved on $L^1(\R^3\times \Om, dpdx)$.

Next, to see that $0 \leq \eta \leq 1$, let $\chi \geq 0$ denote any compactly supported bounded function with $L^1(\R^{3} \times \Om, dpdx)$ norm 1. Then
\begin{align}
    \int_{\R^3 \times \Om}  \eta \chi = \lim_{n \rightarrow \infty} \int \eta_n \chi.
\end{align}
Since $0 \leq \eta_n \leq 1$, we see that
\begin{align}
    0 \leq \int_{\R^3 \times \Om}  \eta \chi \leq 1.
\end{align}
It follows that $0 \leq \eta \leq 1$. 

It remains to show that 
\begin{align}
    \|\bar P L_1 \eta\|_{\dot{H}^{-1}(\Om)} < \infty.
\end{align}
Let $f \in \dot{H}^1(\Om)$ with mean zero. Then
\begin{align}
    \int_{\R^3 \times \Om} f \eta \leq& \|f\|_{L^6(\Om)} \int_{\R^3} (1+p^2)^{-5/6} (1+p^2)^{5/6}\|\eta^{6/5}\|_{L^1(\Om, dx)}^{5/6}.
\end{align}
By Hardy-Littlewood-Sobolev or Sobolev-Poincare and since $0 \leq \eta \leq 1$, we see that  
\begin{align}
    \int_{\R^3 \times \Om} f \eta \leq& \|f\|_{\dot{H}^1(\Om)} \|(1+p^2)^{-5/6}\|_{L^6(\R^3, dp)} \|(1+p^2)^{5/6} \|\eta^{6/5} \|_{L^1(\Om, dx)}^{5/6} \|_{L^{6/5}(\R^3,dp)} \\
    \leq&  C \|f\|_{\dot{H}^1(\Om)} \|(1+p^2)\eta^{6/5}\|_{L^1(dpdx)}^{5/6}.
\end{align}
Since $0 \leq \eta \leq 1$, we see that
\begin{align}
    \int_{\R^3 \times \Om} f \eta \leq& C \|f\|_{\dot{H}^1(\Om)} \|(1+p^2)\eta\|_{L^1(dpdx)}^{5/6}.
\end{align}
It follows by the Riesz representation theorem that
\begin{align}
    \|\bar P L_1 \eta\|_{\dot{H}^{-1}} \leq C \|(1+p^2)\eta\|_{L^1(dpdx)}^{5/6} < \infty.
\end{align}

Finally, Hardy-Littlewood-Sobolev or Sobolev-Poincare shows any $f \in \dot{H}^1(\Om)$ with mean zero is also in $L^6(\Om)$. For such an $f$,
\begin{align}
    \frac{f}{(1+p^2)} \in L^6((1+p^2)dpdx).
\end{align}
Weak convergence of $\eta_n$ to $\eta$ in $L^{6/5}((1+p^2)dpdx)$ shows weak convergence of $\bar PL_1\eta_n$ to $\bar PL_1\eta$ in $\dot{H}^{-1}(\Om)$.

In summary, $\eta_n$ converges weakly to $\eta$ in $L^p$ for $1 \leq p < \infty$ and $\bar PL_1\eta_n$ converges weakly in $\dot{H}^{-1}(\Om)$ to $\bar PL_1\eta$.

{\bf Step 4: lower semi-continuity.} 
Using \eqref{eqn:E-compact-form}, we write
\begin{align}
    \mathcal{F}_{\rm LSC}(\eta) =& L_1L_2 (h\eta) - \beta^{-1} L_1L_2 s(\eta).  \\
    & + \frac{1}{2} \left( \| \bar P L_1\eta \|^2_{\dot{H}^{-1}(\Om)} - 2 \text{Re}\lan L_1 \eta, (-\Delta)^{-1}\bar P\kappa \ran_{L^2(\Om)} + \| \bar P \kappa \|^2_{\dot{H}^{-1}(\Om)} \right) .
\end{align}
From this expression, it is evident that $\mathcal{F}_{\rm LSC}$ is convex in $\eta$. Hence, we may assume that $\eta_n$ converges strongly in $L^p$ for all $1 < p < \infty$. In particular, we may assume point-wise convergence. By Fatou's Lemma, the entropy term and all linear terms are lower semi-continuous. The Coulomb term $\| \bar P L_1\eta \|^2_{\dot{H}^{-1}(\Om)}$ is lower semi-continuous since it is the composition of a norm and integral operators.

{\bf Step 5: conclusion.}
By Steps 2 -- 3, we see that there is a minimizing sequence $\eta_n$ converging weakly to $\eta \in D_\kappa$. By Step 4, weak lower semi-continuity and convexity of $\mathcal{F}_{\rm LSC}$ shows that 
\begin{align}
    \mathcal{F}_{\rm LSC}(\eta) \leq \liminf_{n \rightarrow} \mathcal{F}_{\rm LSC}(\eta_n).
\end{align}
Since $\eta_n$ is a minimizing sequence, $\eta$ is a the minimizer (uniqueness is due to convexity). Step 1 shows that the associated $(\phi, \mu)$ of this minimizer solves \eqref{eqn:general-F-def} with \eqref{eqn:F-LSC-def}. The proof of Theorem \ref{thm:PL-existence-uniqueness} is now complete.
\end{proof}

\Addresses

\end{document}